\newtheorem{theorem}{Theorem}[section]
\newtheorem*{theorem*}{Theorem}
\newtheorem{lemma}[theorem]{Lemma}
\newtheorem{corollary}[theorem]{Corollary}
\theoremstyle{definition}
\theoremstyle{remark}
\newtheorem{remark}[theorem]{Remark}
\newtheorem*{remark*}{Remark}
\newcommand{\con}[1]{\mathbb{#1}}
\newcommand{\C}{\con{C}} 
\newcommand{\R}{\con{R}} 
\newcommand{\Dirac}{\mathcal{H}}
\newcommand{\D}{\mathcal{D}}
\newcommand{\re}{\operatorname{Re}}
\newcommand{\im}{\operatorname{Im}}
\numberwithin{equation}{section}
\title[Convergence of generalized MIT bag models to zigzag  Dirac operators]
{Convergence of generalized MIT bag models to\\
Dirac operators with zigzag boundary conditions}
\author[J. Duran]{Joaquim Duran}
\address{J. Duran
\newline
Centre de Recerca Matem\`atica, Edifici C, Campus Bellaterra, 08193 Bellaterra, Spain.}
\email{jduran@crm.cat}
\author[A. Mas]{Albert Mas}
\address{A. Mas \textsuperscript{1,2}
\newline
\textsuperscript{1}
Departament de Matem\`atiques,
Universitat Polit\`ecnica de Catalunya,
Campus Diagonal Bes\`os, Edifici A (EEBE), Av. Eduard Maristany 16, 08019
Barcelona, Spain
\newline
\textsuperscript{2}
Centre de Recerca Matem\`atica, Edifici C, Campus Bellaterra, 08193 Bellaterra, Spain.}
\email{albert.mas.blesa@upc.edu}
\date{\today}
\subjclass[2010]{Primary: 35P05, 35Q40; Secondary 47A10, 81Q10.}
\keywords{Dirac operator, spectral theory, resolvent convergence.}
\thanks{The two authors are supported by the Spanish grants PID2021-123903NB-I00 and RED2018-102650-T funded by MCIN/AEI/10.13039/501100011033 and by ERDF ``A way of making Europe'', and by the Catalan grant 
2021-SGR-00087. 
This work is supported by the Spanish State Research Agency, through the Severo Ochoa and Mar\'ia de Maeztu Program for Centers and Units of Excellence in R\&D (CEX2020-001084-M). The first author is supported by CEX2020-001084-M-20-1 and acknowledges CERCA Programme/Generalitat de Catalunya for institutional support.}
\begin{document}

\begin{abstract}
This work addresses the resolvent convergence of  generalized MIT bag operators to Dirac operators with zigzag type boundary conditions. We prove that the convergence holds in strong but not in norm resolvent sense. Moreover, we show that the only obstruction for having norm resolvent convergence is the existence of an eigenvalue of infinite multiplicity for the limiting operator. More precisely, we prove the convergence of the resolvents in operator norm once projected into the orthogonal of the corresponding eigenspace.
\end{abstract}

\maketitle

\section{Introduction}

In the 1970s the MIT bag model was introduced as a simplified three-dimensional model to describe the confinement of relativistic particles in a box, such as quarks in hadrons \cite{bogoliubov1968,johnson}. From a mathematical perspective, a family of Dirac operators $\{\Dirac_\tau\}_{\tau\in\R}$ which includes the MIT bag model was investigated in \cite{Mas2022}.\footnote{A two-dimensional analog of the family $\{\Dirac_\tau\}_{\tau\in\R}$ is used in the mathematical description of graphene; see \cite{Benguria2017Self,Benguria2017Spectral} and the references therein. In \Cref{ss:quantum_dots} we will comment on how the results of the present paper also adapt to the two-dimensional setting.}
In that work it was pointed out the interest of studying the convergence, as $\tau \to \pm \infty$, of the operators $\Dirac_\tau$ to the Dirac operators with zigzag boundary conditions investigated in \cite{Holzmann2021}. The purpose of the present work is to perform such study.

\subsection{Generalized MIT bag models and zigzag boundary conditions}
Let $-i \alpha \cdot \nabla + m \beta$ denote the differential expression that gives the action of the free Dirac operator in $\R^3$. Here,
$m\geq0$ denotes the mass, 
$\nabla=(\partial_1,\partial_2,\partial_3)$ denotes the gradient in $\R^3$, $\alpha := (\alpha_1, \alpha_2, \alpha_3)$, 
$$
\alpha_j:=\begin{pmatrix}0&\sigma_j\\\sigma_j&0\end{pmatrix} \mbox{ for }j=1,2,3, \quad  \text{and}\quad
\beta:=\begin{pmatrix}I_2&0\\0&-I_2\end{pmatrix}	
$$
are the $\C^{4\times4}$-valued Dirac matrices,
$I_2$ denotes the identity matrix in $\C^{2\times 2}$, and
$$
\sigma_1 := 
\begin{pmatrix}
	0&1\\1&0
\end{pmatrix},
\quad
\sigma_2 := 
\begin{pmatrix}
	0&-i\\i&0
\end{pmatrix},
\quad
\sigma_3 := 
\begin{pmatrix}
	1&0\\0&-1
\end{pmatrix}
$$
are the Pauli matrices. As customary, we use the notation $\alpha \cdot X:=\alpha_1 X_1+\alpha_2 X_2+\alpha_3 X_3$ for $X = (X_1,X_2,X_3)$, and analogously for $\sigma \cdot X$ with
$\sigma := (\sigma_1,\sigma_2,\sigma_3)$.

Let $\Omega\subset \R^3$ be a bounded domain with $C^2$ boundary. We denote by $\nu$ the unit normal vector field at $\partial\Omega$ which points outwards of $\Omega$. Given $\tau\in\R$, let $\Dirac_\tau$ be the Dirac operator in $L^2(\Omega)^4$ defined by
\begin{equation}\label{def:Dirac_tau}
	\begin{split}
		\mathrm{Dom}(\Dirac_\tau) &:= \big\{ \varphi \in H^1(\Omega)^4: \, \varphi = i (\sinh\tau- \cosh\tau \, \beta)( \alpha \cdot\nu ) \varphi  \,\text{ on } \partial \Omega \big\},\\
		\Dirac_\tau\varphi &:= (-i \alpha \cdot \nabla + m \beta)\varphi \quad\text{for all 
			$\varphi\in\mathrm{Dom}(\Dirac_\tau);$}
	\end{split}
\end{equation}
the boundary condition in 
$\mathrm{Dom}(\Dirac_\tau)$ is understood in the $L^2(\partial\Omega)^4$ sense.\footnote{The reader may look at \Cref{ss:notation}, where we recall the basic notation used throughout the paper.}
The set $\{\Dirac_\tau\}_{\tau\in\R}$ was introduced in \cite{Mas2022} as a family of confining models generated by electrostatic and Lorentz scalar $\delta$-shell potentials; the so-called MIT bag model corresponds to $\tau=0$. For 
$\tau\in\R$, the operator $\Dirac_\tau$ is self-adjoint in $L^2(\Omega)^4$ by \cite[Proposition 5.15]{Behrndt2020}. Moreover, from \cite[Lemma 1.2]{Mas2022} we know that its spectrum 
$\sigma(\Dirac_\tau)$ is contained in 
$\R\setminus{[-m,m]}$ and is purely discrete; see \Cref{rk:disc_spec} for further comments on this last assertion. In particular, the essential spectrum $\sigma_{\operatorname{ess}}(\Dirac_{\tau})$ is empty for all $\tau\in\R$.
Furthermore, 
$\lambda\in\sigma(\Dirac_\tau)$ if and only if 
$-\lambda\in\sigma(\Dirac_{-\tau})$.

A spectral study of the mapping 
$\tau\mapsto\Dirac_\tau$ was carried out in \cite{Mas2022}. It was shown that the eigenvalues of $\Dirac_\tau$ can be parametrized by  increasing real analytic functions of $\tau$. Special attention was paid on the asymptotic behavior of the eigenvalues as $\tau\to\pm\infty$.
Thanks to the odd symmetry of the eigenvalues with respect to the parameter $\tau$ mentioned before, this study was reduced to $\sigma(\Dirac_\tau)\cap(m,+\infty)$, and the following result was shown. In its statement, $-\Delta_D$ denotes the self-adjoint realization of the Dirichlet Laplacian in $L^2(\Omega)$. 

\begin{theorem}\label{AMSV:eig_curv}
$($\cite[Theorem 1.4]{Mas2022}$)$
Let $\tau\mapsto\lambda(\tau)\in\sigma(\Dirac_\tau)\cap(m,+\infty)$ be a continuous function defined on an interval $I\subset\R$. The following holds:
\begin{enumerate}[label=$(\roman*)$]
\item If $I=(-\infty,\tau_0)$ for some $\tau_0\in\R$, then 
$\lambda(-\infty):=\lim_{\tau\downarrow-\infty}\lambda(\tau)$ exists and belongs to $[m,+\infty)$. In addition,
\begin{equation}
\lambda(-\infty)=		
\begin{cases}
m  \text{ if $\lambda(\tau)\leq\sqrt{\min\sigma(-\Delta_D)+m^2}$ for some $\tau\in I$,}\\
\sqrt{ \lambda_D+m^2} \text{ for some 
$\lambda_D\in \sigma(-\Delta_D)$} \text{ otherwise.}
\end{cases}
\end{equation}
\item If $I=(\tau_0,+\infty)$ for some $\tau_0\in\R$, then 
$\lambda(+\infty):=\lim_{\tau\uparrow+\infty}\lambda(\tau)$ exists as an element of the set $(m,+\infty]$. 
In addition, if $\lambda(+\infty)<+\infty$ then 
$$\lambda(+\infty)={\textstyle\sqrt{ \lambda_D+m^2}} 
\quad \text{ for some }
\lambda_D\in \sigma(-\Delta_D).$$
\end{enumerate}
\end{theorem}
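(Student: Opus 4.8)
The plan is to combine the monotonicity of the eigenvalue branches with a compactness argument for the corresponding eigenfunctions, exploiting the elementary fact that squaring the Dirac operator converts the boundary condition in \eqref{def:Dirac_tau} into a Robin‑type condition whose weight $e^{\pm\tau}$ degenerates as $\tau\to\pm\infty$. Since $\tau\mapsto\lambda(\tau)$ is increasing (the eigenvalue branches being increasing, as recalled above) and $\sigma(\Dirac_\tau)\subset\R\setminus(-m,m)$, on $I=(-\infty,\tau_0)$ the limit $\lambda(-\infty)=\inf_I\lambda$ exists in $[m,+\infty)$ and on $I=(\tau_0,+\infty)$ the limit $\lambda(+\infty)=\sup_I\lambda$ exists in $(m,+\infty]$; this proves the existence assertions. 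For the identification, let $\varphi=(u,v)^\top\in\mathrm{Dom}(\Dirac_\tau)$ (so $u,v\in H^1(\Omega)^2$) solve $\Dirac_\tau\varphi=\lambda\varphi$ with $\lambda>m$. Writing $\alpha\cdot\nu$ and $\beta$ in $2\times2$ blocks, the boundary condition in \eqref{def:Dirac_tau} is equivalent to $v=ie^{\tau}(\sigma\cdot\nu)u$ on $\partial\Omega$ (equivalently $u=-ie^{-\tau}(\sigma\cdot\nu)v$, as $(\sigma\cdot\nu)^2=I_2$), while the Dirac system yields $v=(\lambda+m)^{-1}(-i\sigma\cdot\nabla)u$, $u=(\lambda-m)^{-1}(-i\sigma\cdot\nabla)v$ and, using $(\sigma\cdot\nabla)^2=\Delta$, $-\Delta u=(\lambda^2-m^2)u$, $-\Delta v=(\lambda^2-m^2)v$ in $\Omega$. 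Combining, $u$ and $v$ satisfy the Robin‑type conditions $(\sigma\cdot\nu)(\sigma\cdot\nabla)u+(\lambda+m)e^{\tau}u=0$ and $(\sigma\cdot\nu)(\sigma\cdot\nabla)v-(\lambda-m)e^{-\tau}v=0$ on $\partial\Omega$, whose weights force $u|_{\partial\Omega}\to0$ as $\tau\to+\infty$ and $v|_{\partial\Omega}\to0$ as $\tau\to-\infty$.

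To prove $(ii)$, assume $\lambda(+\infty)<+\infty$, pick $\tau_n\uparrow+\infty$ with $\lambda_n:=\lambda(\tau_n)\to\lambda(+\infty)$, and let $\varphi_n=(u_n,v_n)^\top$ be normalized eigenfunctions. The one nontrivial input, addressed below, is a uniform bound $\sup_n\|\varphi_n\|_{H^1(\Omega)^4}<\infty$. Granting it, along a subsequence $\varphi_n\rightharpoonup\varphi_\infty$ in $H^1(\Omega)^4$, hence $\varphi_n\to\varphi_\infty$ strongly in $L^2(\Omega)^4$ and (compactness of the trace) in $L^2(\partial\Omega)^4$; thus $\|\varphi_\infty\|=1$, $(-i\alpha\cdot\nabla+m\beta)\varphi_\infty=\lambda(+\infty)\varphi_\infty$ in $\Omega$, and $u_n|_{\partial\Omega}=-ie^{-\tau_n}(\sigma\cdot\nu)v_n\to0$ in $L^2(\partial\Omega)^2$ (as $\|v_n\|_{L^2(\partial\Omega)}$ is bounded), so $u_\infty\in H^1_0(\Omega)^2$. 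Because $\lambda(+\infty)\geq\lambda(\tau_0)>m$, and hence $\lambda(+\infty)+m>0$, one cannot have $u_\infty\equiv0$ (else $(\lambda(+\infty)+m)v_\infty=-i\sigma\cdot\nabla u_\infty=0$ with $v_\infty\not\equiv0$); therefore $0\neq u_\infty\in H^1_0(\Omega)^2$ solves $-\Delta u_\infty=(\lambda(+\infty)^2-m^2)u_\infty$, so $\lambda(+\infty)^2-m^2\in\sigma(-\Delta_D)$, i.e. $\lambda(+\infty)=\sqrt{\lambda_D+m^2}$ for some $\lambda_D\in\sigma(-\Delta_D)$.

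For $(i)$ the argument is parallel, now squeezing the lower component. Taking $\tau_n\downarrow-\infty$ with $\lambda_n=\lambda(\tau_n)\to\lambda(-\infty)\in[m,\lambda(\tau_0))$ and normalized eigenfunctions $\varphi_n=(u_n,v_n)^\top$, the same uniform $H^1$ bound gives $\varphi_n\rightharpoonup\varphi_\infty$ with $\|\varphi_\infty\|=1$, $(-i\alpha\cdot\nabla+m\beta)\varphi_\infty=\lambda(-\infty)\varphi_\infty$, and $v_n|_{\partial\Omega}=ie^{\tau_n}(\sigma\cdot\nu)u_n\to0$ (now since $e^{\tau_n}\to0$ and $\|u_n\|_{L^2(\partial\Omega)}$ is bounded), so $v_\infty\in H^1_0(\Omega)^2$. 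If $v_\infty\not\equiv0$ then $\lambda(-\infty)>m$ (otherwise $\sigma\cdot\nabla v_\infty=0$ and $v_\infty|_{\partial\Omega}=0$ would force $v_\infty=0$) and $-\Delta v_\infty=(\lambda(-\infty)^2-m^2)v_\infty$ forces $\lambda(-\infty)^2-m^2\in\sigma(-\Delta_D)$, so $\lambda(-\infty)=\sqrt{\lambda_D+m^2}\geq\sqrt{\min\sigma(-\Delta_D)+m^2}$; if $v_\infty\equiv0$ then $u_\infty\not\equiv0$ and $-i\sigma\cdot\nabla v_\infty=(\lambda(-\infty)-m)u_\infty=0$ force $\lambda(-\infty)=m$. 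Since the branch is strictly increasing, $\lambda(-\infty)=\inf_I\lambda<\lambda(\tau)$ for every $\tau\in I$; hence if $\lambda(\tau)\leq\sqrt{\min\sigma(-\Delta_D)+m^2}$ for some $\tau\in I$ then $\lambda(-\infty)<\sqrt{\min\sigma(-\Delta_D)+m^2}$, which is incompatible with the first alternative and leaves $\lambda(-\infty)=m$; otherwise $\lambda(-\infty)\geq\sqrt{\min\sigma(-\Delta_D)+m^2}>m$, which excludes $\lambda(-\infty)=m$ and leaves $\lambda(-\infty)=\sqrt{\lambda_D+m^2}$ with $\lambda_D\in\sigma(-\Delta_D)$. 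This is precisely $(i)$.

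The main obstacle is the uniform‑in‑$\tau$ bound on $\|\varphi_n\|_{H^1(\Omega)^4}$ for the normalized eigenfunctions when $\lambda(\tau_n)$ stays bounded: integration‑by‑parts identities alone are not enough --- e.g. $\mu\|u\|_{L^2(\Omega)}^2=\|\sigma\cdot\nabla u\|_{L^2(\Omega)}^2+(\lambda+m)e^{\tau}\|u\|_{L^2(\partial\Omega)}^2$ yields $\|u_n\|_{L^2(\partial\Omega)}\to0$ as $\tau\to+\infty$ but leaves the genuine first‑order boundary defect $\|\nabla u\|_{L^2(\Omega)}^2-\|\sigma\cdot\nabla u\|_{L^2(\Omega)}^2$ uncontrolled. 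I would instead extract the bound from elliptic a priori estimates for the reduced Helmholtz problems --- for $u_n$ when $\tau\to+\infty$, whose boundary operator $(\sigma\cdot\nu)(\sigma\cdot\nabla)+(\lambda+m)e^{\tau}$ satisfies the Lopatinski--Shapiro condition for $-\Delta$ and degenerates only to the Dirichlet condition, and for $v_n$ when $\tau\to-\infty$, where the corresponding boundary value problem loses ellipticity only at frequencies of order $e^{-\tau}$ while the eigenfunctions, solving $-\Delta v_n=(\lambda_n^2-m^2)v_n$ with bounded $\lambda_n$, remain spectrally concentrated at bounded frequencies. An alternative route is the energy identity $\|\Dirac_\tau\varphi\|^2=\|\nabla\varphi\|^2+m^2\|\varphi\|^2+\mathcal{B}_\tau[\varphi]$ on $\mathrm{Dom}(\Dirac_\tau)$, isolating the nonnegative boundary contribution $m\int_{\partial\Omega}(e^{\tau}|u|^2+e^{-\tau}|v|^2)$ and controlling the remaining tangential boundary term after simplifying it by means of the boundary condition. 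I expect this uniform bound to be the part demanding the most work; everything else above is soft.
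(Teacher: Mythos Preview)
This theorem is not proved in the present paper: it is quoted from \cite[Theorem~1.4]{Mas2022} as background for the resolvent-convergence results, and no argument for it appears here. There is therefore no proof in this paper to compare your attempt against.

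That said, your strategy---monotonicity for the existence of the limits, then a compactness argument on normalized eigenfunctions together with the squared equation $-\Delta w=(\lambda^2-m^2)w$ to identify them---is the natural one and is in the spirit of how \cite{Mas2022} proceeds. You have correctly located the only real obstacle: a uniform-in-$\tau$ $H^1$ bound for the normalized eigenfunctions when the eigenvalue stays bounded. Note that \Cref{Lemma:boundH1normHtau} of the present paper shows that the graph-norm-to-$H^1$ constant on $\mathrm{Dom}(\Dirac_\tau)$ grows like $\cosh\tau$, confirming that such a bound cannot come from the domain alone and must exploit the eigenvalue equation. One caution: a uniform $H^1$ bound on the \emph{full} eigenfunction $\varphi_n=(u_n,v_n)^\intercal$ should not be expected in general---for instance, when $\lambda(-\infty)=m$ the limiting eigenspace $\ker(\Dirac_{-\infty}-m)$ lies only in $L^2(\Omega)^4$, not in $H^1(\Omega)^4$ (cf.\ the discussion after \eqref{SpectraPMIn}). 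The compactness argument should therefore be run on the single component whose Robin weight blows up ($u_n$ for $\tau\to+\infty$, $v_n$ for $\tau\to-\infty$), which is precisely the component carrying the Dirichlet limit; your Robin/Lopatinski--Shapiro remark already points in this direction, but your write-up as it stands asks for more than it needs by demanding $\sup_n\|\varphi_n\|_{H^1(\Omega)^4}<\infty$.
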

This result establishes a clear connection between the spectrum of the Dirac operator $\Dirac_\tau$ as $\tau\to\pm\infty$ and the spectrum of the Dirichlet Laplacian $-\Delta_D$. This leads to ask which should be the limiting operators of $\Dirac_\tau$ as $\tau\to\pm\infty$, and to investigate in which resolvent sense the convergence holds true. 

The first question is easily answered thanks to the following observation. Given $\varphi\in\mathrm{Dom}(\Dirac_\tau)$, if we write it in components\footnote{Given a matrix $A$, we denote by $A^\intercal$ the transpose matrix of $A$. The notation $\varphi= (u, v)^\intercal$, to be used throughout this article, refers to the   decomposition of $\varphi:\Omega\to\C^4$ in upper and lower components, that is, if $\varphi=(\varphi_1,\varphi_2,\varphi_3,\varphi_4)^\intercal$ with $\varphi_j:\Omega\to\C$ for $j=1,2,3,4$, then $u=(\varphi_1,\varphi_2)^\intercal$ and $v=(\varphi_3,\varphi_4)^\intercal$.}  as $\varphi = (u, v)^\intercal$, the boundary condition
$$\varphi = i (\sinh\tau- \cosh\tau\, \beta)( \alpha \cdot\nu ) \varphi$$ rewrites in terms of the components as $u = -ie^{-\tau} (\sigma \cdot \nu) v$. Formally, 
this equation forces $u$ and $v$ to vanish on $\partial \Omega$ in the limits $\tau \uparrow +\infty$ and $\tau \downarrow -\infty$, respectively. This leads to consider the Dirac operators with zigzag type boundary conditions studied in \cite{Holzmann2021}, which are defined by
\begin{equation}\label{def:Dirac_inft}
\begin{split}
\mathrm{Dom}(\Dirac_{+\infty}) 
&:= \big\{ \varphi= (u, v)^\intercal : \, u \in H^1_0(\Omega)^2,  v \in L^2(\Omega)^2, \alpha \cdot \nabla \varphi \in L^2(\Omega)^4 \big\},\\
\Dirac_{+\infty}\varphi &:= (-i \alpha \cdot \nabla + m \beta)\varphi \quad\text{for all $\varphi\in\mathrm{Dom}(\Dirac_{+\infty})$}
\end{split}
\end{equation}
and
\begin{equation}\label{def:Dirac_-inft}
\begin{split}
\mathrm{Dom}(\Dirac_{-\infty}) 
&:= \big\{ \varphi= (u, v)^\intercal : \, u \in L^2(\Omega)^2, v \in H^1_0(\Omega)^2, \alpha \cdot \nabla \varphi \in L^2(\Omega)^4 \big\},\\
\Dirac_{-\infty}\varphi &:= (-i \alpha \cdot \nabla + m \beta)\varphi \quad\text{for all $\varphi\in\mathrm{Dom}(\Dirac_{+\infty}).$}
\end{split}
\end{equation}
From \cite[Theorem 1.1 and Lemma 3.2]{Holzmann2021} we know that $\Dirac_{\pm\infty}$ are self-adjoint in $L^2(\Omega)^4$ and that 
\begin{equation} \label{SpectraPMIn}
    \begin{split}
     \sigma(\Dirac_{+\infty}) & = \{-m\} \cup \big\{\pm {\textstyle\sqrt{\lambda_D+m^2}}:\, \lambda_D\in \sigma(-\Delta_D) \big\}, \\
       \sigma(\Dirac_{-\infty}) & = \{m\} \cup \big\{\pm {\textstyle\sqrt{\lambda_D+m^2}}:\, \lambda_D\in \sigma(-\Delta_D) \big\}
    \end{split}
\end{equation}
(observe that this description of $\sigma(\Dirac_{\pm\infty})$ is in agreement with \Cref{AMSV:eig_curv}).
In addition, $\pm m\in \sigma_{\operatorname{ess}}(\Dirac_{\mp\infty})$ is an eigenvalue of infinite multiplicity. The non-emptiness of the essential spectrum implies that $\mathrm{Dom}(\Dirac_{\pm\infty})$ is not contained in $H^1(\Omega)^4$. Thus, the eigenvalue $\pm m$ gives rise to a loss of regularity in $\mathrm{Dom}(\Dirac_{\mp\infty})$ with respect to $\mathrm{Dom}(\Dirac_{\tau})$ for $\tau\in\R$.

The main purpose of this work is to study the convergence of $\Dirac_{\tau}$ to $\Dirac_{\pm\infty}$ as 
$\tau\to\pm\infty$ in strong and norm resolvent senses. This study was motivated in \cite[Remark 4.4]{Mas2022} but it was not addressed throughout that paper; see \Cref{LimitFirstEigenvalue} and the paragraph that precedes it for more details. As we will see, the eigenvalue $\pm m$ will also play an important role in this study.

\subsection{Main results} \label{ss:main_results}

We begin this section with a simple observation which shows that $\Dirac_\tau$ does not converge to 
$\Dirac_{\pm \infty}$ in the norm resolvent sense as $\tau \to \pm \infty$: 
if there was convergence in the norm resolvent sense we would get that 
$\lim_{\tau\to\pm\infty}\sigma_{\operatorname{ess}}(\Dirac_\tau)=\sigma_{\operatorname{ess}}(\Dirac_{\pm\infty})$ by \cite[Satz 9.24]{Weidmann2000}, but this is impossible since $\sigma_{\operatorname{ess}}(\Dirac_{\pm\infty})\neq\emptyset$ and $\sigma_{\operatorname{ess}}(\Dirac_{\tau})=\emptyset$ for all $\tau\in\R$. 

Once norm resolvent convergence is discarded, it is natural to ask whether the convergence in the strong resolvent sense holds or not. The following result answers this question in the affirmative.

\begin{theorem}\label{thm:res_conv_1}
Given $\tau\in\R$, let $\Dirac_\tau$ be the operator defined in \eqref{def:Dirac_tau}. Let $\Dirac_{+ \infty}$ and 
$\Dirac_{-\infty}$ be the operators defined in \eqref{def:Dirac_inft} and \eqref{def:Dirac_-inft}, respectively. Then, 
$\Dirac_\tau$ converges to $\Dirac_{\pm \infty}$ in the strong resolvent sense as $\tau\to\pm\infty$.
\end{theorem}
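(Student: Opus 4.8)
The plan is to establish strong resolvent convergence by verifying it on a common core, exploiting the fact that all the operators $\Dirac_\tau$ and $\Dirac_{\pm\infty}$ act via the same differential expression $-i\alpha\cdot\nabla+m\beta$; the only difference is the boundary condition. Since $i$ and $-i$ lie in the resolvent set of every self-adjoint operator, it suffices to show that for every $f\in L^2(\Omega)^4$ one has $(\Dirac_\tau-i)^{-1}f\to(\Dirac_{\pm\infty}-i)^{-1}f$ in $L^2(\Omega)^4$ as $\tau\to\pm\infty$. By density and the uniform bound $\|(\Dirac_\tau-i)^{-1}\|\le 1$, it is enough to prove this for $f$ in a dense subset, and by a standard reduction (see e.g.\ the proof of strong resolvent convergence via cores) it suffices to find, for each $\varphi$ in a core of $\Dirac_{+\infty}$ (resp.\ $\Dirac_{-\infty}$), a sequence $\varphi_\tau\in\mathrm{Dom}(\Dirac_\tau)$ with $\varphi_\tau\to\varphi$ and $\Dirac_\tau\varphi_\tau\to\Dirac_{\pm\infty}\varphi$ in $L^2(\Omega)^4$. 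I will treat $\tau\uparrow+\infty$; the case $\tau\downarrow-\infty$ follows by the symmetry $\lambda\in\sigma(\Dirac_\tau)\iff-\lambda\in\sigma(\Dirac_{-\tau})$, or equivalently by swapping upper and lower components.

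First I would identify a convenient core for $\Dirac_{+\infty}$. Writing $\varphi=(u,v)^\intercal$, the domain asks $u\in H^1_0(\Omega)^2$, $v\in L^2(\Omega)^2$ and $\alpha\cdot\nabla\varphi\in L^2(\Omega)^4$; spelled out, $\sigma\cdot\nabla v\in L^2$ (no boundary condition on $v$) and $\sigma\cdot\nabla u\in L^2$ (automatic since $u\in H^1$). A natural dense subset to approximate is $\{\varphi=(u,v)^\intercal:\ u\in C_c^\infty(\Omega)^2,\ v\in H^1(\Omega)^2\}$, and if this is not already a core one argues by a further approximation that it is. Given such a $\varphi$, I would build $\varphi_\tau=(u,v_\tau)^\intercal$ by keeping the upper component $u$ fixed and correcting only the lower component so as to enforce the boundary condition $u=-ie^{-\tau}(\sigma\cdot\nu)v_\tau$ on $\partial\Omega$. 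Since $u$ vanishes on $\partial\Omega$, the required boundary trace of $v_\tau$ is $O(e^{\tau})$ times the trace of $u$... wait — more carefully, the condition reads $(\sigma\cdot\nu)v_\tau = i e^{\tau} u$ on $\partial\Omega$, which is $0$ on $\partial\Omega$ since $u|_{\partial\Omega}=0$; hence one wants $v_\tau|_{\partial\Omega}=0$ as well, i.e.\ $v_\tau\in H^1_0(\Omega)^2$ is forced only in the limit. For finite $\tau$ one must instead respect the exact relation, so I would set $v_\tau := v - w_\tau$ where $w_\tau\in H^1(\Omega)^2$ is an extension of the boundary datum $g_\tau:= v|_{\partial\Omega} - (\text{the unique tangential-type correction})$... the cleanest route: choose $w_\tau\in H^1(\Omega)^2$ with $w_\tau|_{\partial\Omega}=v|_{\partial\Omega}$ and $\|w_\tau\|_{L^2(\Omega)}\to0$ (possible since $v|_{\partial\Omega}\in H^{1/2}(\partial\Omega)^2$ is fixed and one can take extensions concentrating near $\partial\Omega$), and then set $\varphi_\tau:=(u + i e^{\tau}(\sigma\cdot\nu\text{-extension})\,?\ ,\ v-w_\tau)$. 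The key point is that, because $u|_{\partial\Omega}=0$, the vector $\varphi_\tau:=(u,\,v-w_\tau)^\intercal$ already satisfies the boundary identity up to an error living only where $u\ne0$ on $\partial\Omega$, i.e.\ nowhere; so in fact $\varphi_\tau:=(u,v-w_\tau)^\intercal\in\mathrm{Dom}(\Dirac_\tau)$ for every $\tau$. Then $\varphi_\tau\to\varphi$ in $L^2$ since $\|w_\tau\|_{L^2}\to0$, and $\Dirac_\tau\varphi_\tau=(-i\alpha\cdot\nabla+m\beta)(\varphi-(0,w_\tau)^\intercal)=\Dirac_{+\infty}\varphi-(-i\alpha\cdot\nabla+m\beta)(0,w_\tau)^\intercal$, whose error term is $-i\alpha\cdot\nabla(0,w_\tau)^\intercal + m\beta(0,w_\tau)^\intercal$, with $L^2$ norm controlled by $\|\nabla w_\tau\|_{L^2}+\|w_\tau\|_{L^2}$.

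This brings out what I expect to be the main obstacle: the corrector $w_\tau$ must simultaneously carry the full (fixed, $\tau$-independent) boundary trace $v|_{\partial\Omega}$ yet have $L^2$ norm \emph{and} $H^1$ seminorm tending to zero — and the latter is impossible if $v|_{\partial\Omega}\ne0$, since the trace theorem gives $\|w_\tau\|_{H^1}\gtrsim\|v|_{\partial\Omega}\|_{H^{1/2}}>0$. Hence the naive ``frozen upper component'' construction cannot make $\Dirac_\tau\varphi_\tau\to\Dirac_{+\infty}\varphi$ in $L^2$; one is forced to perturb the upper component as well, absorbing the large factor $e^{\tau}$. The correct construction is: pick a boundary corrector $h_\tau\in H^1(\Omega)^2$ with $h_\tau|_{\partial\Omega}=(\sigma\cdot\nu)v|_{\partial\Omega}$, $\|h_\tau\|_{L^2}\to0$ but allowing $\|\nabla h_\tau\|_{L^2}\to\infty$ at a controlled rate, set $u_\tau:= u - i e^{-\tau} h_\tau$ (so $u_\tau|_{\partial\Omega}=-ie^{-\tau}(\sigma\cdot\nu)v|_{\partial\Omega}$, matching the boundary condition with $v$ unchanged), and check $u_\tau\to u$ in $L^2$ (needs $e^{-\tau}\|h_\tau\|_{L^2}\to0$, easy) and $\alpha\cdot\nabla$ applied to the correction has small $L^2$ norm (needs $e^{-\tau}\|\nabla h_\tau\|_{L^2}\to0$). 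So the real technical content is a quantitative boundary-extension lemma: given $g\in H^{1/2}(\partial\Omega)^2$ and $\varepsilon>0$, there is $h\in H^1(\Omega)^2$ with $h|_{\partial\Omega}=g$, $\|h\|_{L^2(\Omega)}\le\varepsilon$ and $\|\nabla h\|_{L^2(\Omega)}\le C/\varepsilon$ (extension supported in an $\varepsilon^2$-collar of $\partial\Omega$). Choosing $\varepsilon=\varepsilon(\tau)\to0$ slowly enough that $e^{-\tau}/\varepsilon(\tau)\to0$ — e.g.\ $\varepsilon(\tau)=e^{-\tau/2}$ — then makes all error terms vanish, giving $\varphi_\tau:=(u-ie^{-\tau}h_\tau,\,v)^\intercal\in\mathrm{Dom}(\Dirac_\tau)$ with $\varphi_\tau\to\varphi$ and $\Dirac_\tau\varphi_\tau\to\Dirac_{+\infty}\varphi$. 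Once this is in place, the abstract criterion for strong resolvent convergence on a core (together with the uniform resolvent bound $\|(\Dirac_\tau-i)^{-1}\|\le1$) yields the theorem; the case $\tau\to-\infty$ is obtained verbatim after interchanging the roles of $u$ and $v$ and of $+\infty$ and $-\infty$.
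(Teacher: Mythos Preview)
Your final construction is essentially the paper's: correct the upper component by an $H^1$ extension of $-ie^{-\tau}(\sigma\cdot\nu)v|_{\partial\Omega}$ and use the factor $e^{-\tau}$ to kill the error. Two remarks, however. First, the ``quantitative boundary-extension lemma'' with $\varepsilon$-collars is unnecessary: since the boundary datum already carries the prefactor $e^{-\tau}$, a \emph{fixed} bounded extension operator $E:H^{1/2}(\partial\Omega)^2\to H^1(\Omega)^2$ gives $\|e^{-\tau}E((\sigma\cdot\nu)v)\|_{H^1(\Omega)^2}\le Ce^{-\tau}\|v\|_{H^{1/2}(\partial\Omega)^2}\to0$, which controls both the $L^2$ and the gradient terms at once. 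This is exactly what the paper does. Second, your hand-wave ``if this is not already a core one argues by a further approximation'' is precisely the nontrivial density statement the paper isolates as Lemma~2.1: functions $v\in L^2(\Omega)^2$ with $\sigma\cdot\nabla v\in L^2(\Omega)^2$ can be approximated in the graph norm by $H^1$ functions. You need this (or an equivalent) to justify restricting to $v\in H^1(\Omega)^2$; without it the argument is incomplete, since a generic $v$ in $\mathrm{Dom}(\Dirac_{+\infty})$ has no $H^{1/2}$ boundary trace to extend.
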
 

Combining this theorem with \cite[Theorem VIII.24 $(a)$]{ReedSimon1980} we get that the spectra of the limiting 
operators $\Dirac_{\pm \infty}$ cannot suddenly expand with respect to the spectrum of $\Dirac_\tau$. The fact that they cannot either suddenly contract was already known by \Cref{AMSV:eig_curv} ---the non-contraction effect is a classical consequence of norm resolvent convergence \cite[Theorem VIII.23 $(a)$]{ReedSimon1980}, but recall that such a convergence does not hold in our setting.

As an application of \Cref{thm:res_conv_1}, we characterize  the asymptotic behavior as $\tau\uparrow+\infty$ of the function 
$\lambda_1^+:\R\to(m,+\infty)$ defined by 
\begin{equation} \label{defi:ParamFirstEig}
	\tau \mapsto  \lambda_1^+(\tau):=\min(\sigma(\Dirac_\tau)\cap(m,+\infty)),
\end{equation}
which assigns to every $\tau\in\R$ the first (lowest) positive eigenvalue of $\Dirac_\tau$. In \cite[Theorem 1.5]{Mas2022} it is shown that the function
$\lambda_1^+$ is continuous and increasing in $\R$, that 
$\lim_{\tau\downarrow-\infty}\lambda_1^+(\tau)=m$, 
and that 
$\lim_{\tau\uparrow+\infty}\lambda_1^+(\tau)\in(m,+\infty]$. However, in \cite{Mas2022} it is not proven that 
$\lim_{\tau\uparrow+\infty}\lambda_1^+(\tau)<+\infty$, a property that would relate this limit with the spectrum of the Dirichlet Laplacian thanks to \Cref{AMSV:eig_curv} $(ii)$.
In this regard, in \cite[Remark 4.4]{Mas2022} it is pointed out that if there was strong resolvent convergence of $\Dirac_\tau$ to $\Dirac_{+\infty}$ as $\tau \uparrow +\infty$ then the finiteness of the limit would be ensured. 
As a consequence of \Cref{thm:res_conv_1}, in the following result we show that 
$\lim_{\tau\uparrow+\infty}\lambda_1^+(\tau)<+\infty$ and that, indeed, this limit is described in terms of the first eigenvalue of the Dirichlet Laplacian. 

\begin{corollary}\label{LimitFirstEigenvalue}
Let $\lambda_1^+$ be defined by \eqref{defi:ParamFirstEig}. Then,
       $ \lim_{\tau\uparrow+\infty} \lambda_1^+(\tau) = \sqrt{\min \sigma(-\Delta_D)+m^2}.$
\end{corollary}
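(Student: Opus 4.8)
The plan is to combine \Cref{thm:res_conv_1} (strong resolvent convergence of $\Dirac_\tau$ to $\Dirac_{+\infty}$ as $\tau\uparrow+\infty$) with the known monotonicity and continuity of $\lambda_1^+$ from \cite[Theorem 1.5]{Mas2022}, together with the explicit description of $\sigma(\Dirac_{+\infty})$ in \eqref{SpectraPMIn}. Set $\ell:=\lim_{\tau\uparrow+\infty}\lambda_1^+(\tau)$, which exists in $(m,+\infty]$ because $\lambda_1^+$ is increasing; we must show $\ell=\sqrt{\min\sigma(-\Delta_D)+m^2}=:\mu$. Note $\mu\in\sigma(\Dirac_{+\infty})\cap(m,+\infty)$ by \eqref{SpectraPMIn}, and in fact $\mu=\min(\sigma(\Dirac_{+\infty})\cap(m,+\infty))$ since the only spectral points of $\Dirac_{+\infty}$ in $(-\infty,\mu]$ are $-m$ and the negative square roots.

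First I would prove $\ell\le\mu$. Since $\mu\in\sigma(\Dirac_{+\infty})$, strong resolvent convergence forces $\mu$ to be approximated by spectrum of $\Dirac_\tau$: by \cite[Theorem VIII.24$(a)$]{ReedSimon1980} there exist $\lambda(\tau)\in\sigma(\Dirac_\tau)$ with $\lambda(\tau)\to\mu$ as $\tau\uparrow+\infty$. For $\tau$ large these eigenvalues lie in $(m,+\infty)$ (being close to $\mu>m$ and outside $[-m,m]$), hence $\lambda_1^+(\tau)\le\lambda(\tau)$, and letting $\tau\uparrow+\infty$ gives $\ell\le\mu<+\infty$. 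In particular this already establishes finiteness of the limit, resolving the open point of \cite{Mas2022}.

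For the reverse inequality $\ell\ge\mu$, I would argue by contradiction assuming $\ell<\mu$, and extract a limiting eigenfunction. Pick normalized eigenfunctions $\varphi_\tau\in\mathrm{Dom}(\Dirac_\tau)$ with $\Dirac_\tau\varphi_\tau=\lambda_1^+(\tau)\varphi_\tau$. Using $\lambda_1^+(\tau)\to\ell\in(m,+\infty)$, which stays in the resolvent set $\{\pm m\}^c$, the identity $(\Dirac_\tau-z)^{-1}\varphi_\tau=(\lambda_1^+(\tau)-z)^{-1}\varphi_\tau$ for a fixed $z\notin\R$ combined with strong convergence $(\Dirac_\tau-z)^{-1}\to(\Dirac_{+\infty}-z)^{-1}$ shows that any weak-$L^2$ limit point $\varphi$ of $\{\varphi_\tau\}$ satisfies $(\Dirac_{+\infty}-z)^{-1}\varphi=(\ell-z)^{-1}\varphi$, i.e.\ $\varphi\in\ker(\Dirac_{+\infty}-\ell)$. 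The crux is then to rule out $\varphi=0$: one needs a uniform lower bound preventing the mass of $\varphi_\tau$ from escaping. The natural route is to use that $m\notin\sigma(\Dirac_\tau)$ with a quantitative spectral gap, or better, to invoke the min-max/quadratic-form characterization: from \cite[Theorem 1.5]{Mas2022} or from \eqref{def:Dirac_tau} one gets control of $\|\varphi_\tau\|_{H^1}$ in terms of $\lambda_1^+(\tau)$, so a subsequence converges weakly in $H^1$ and strongly in $L^2$ (compact embedding on the bounded $C^2$ domain $\Omega$), giving $\|\varphi\|_{L^2}=1$. Then $\ell$ would be an eigenvalue of $\Dirac_{+\infty}$ in $(m,\mu)$, contradicting that $\mu$ is the smallest element of $\sigma(\Dirac_{+\infty})\cap(m,+\infty)$.

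The main obstacle is this non-vanishing of the limit eigenfunction, i.e.\ obtaining a $\tau$-uniform $H^1$ (or at least a quantitative $L^2$-non-concentration) bound on $\varphi_\tau$; the boundary condition in \eqref{def:Dirac_tau} degenerates as $\tau\uparrow+\infty$, so the self-adjointness estimates from \cite{Behrndt2020} must be tracked with explicit $\tau$-dependence, or else one replaces the weak-limit argument by a cleaner functional-calculus statement: apply strong resolvent convergence to a fixed continuous compactly supported $f$ with $f\equiv1$ near $\ell$ and $f\equiv0$ near $\mu$ and on $\sigma(\Dirac_{+\infty})$, deduce $f(\Dirac_\tau)\to f(\Dirac_{+\infty})=0$ strongly while $f(\Dirac_\tau)\varphi_\tau=\varphi_\tau$ once $\lambda_1^+(\tau)$ is close enough to $\ell$, again forcing $\varphi_\tau\to0$ weakly and then the same compactness step to reach a contradiction; I would present whichever of these two packagings keeps the $\tau$-uniform estimate shortest.
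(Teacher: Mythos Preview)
Your upper bound $\ell\le\mu$ is correct and is exactly what the paper does: strong resolvent convergence plus \cite[Theorem VIII.24$(a)$]{ReedSimon1980} produces $\lambda_\tau\in\sigma(\Dirac_\tau)$ with $\lambda_\tau\to\mu$, and then $\lambda_1^+(\tau)\le\lambda_\tau$ for large $\tau$ gives $\ell\le\mu<+\infty$.

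The lower bound is where you diverge from the paper, and your route has a real gap. The compactness you need --- a $\tau$-uniform $H^1$ bound on the eigenfunctions $\varphi_\tau$ --- is precisely what fails here. The paper's own \Cref{Lemma:boundH1normHtau} shows that the natural graph-norm-to-$H^1$ constant for $\mathrm{Dom}(\Dirac_\tau)$ grows like $\cosh\tau$, and this is not an artifact: the degeneration of the boundary condition as $\tau\uparrow+\infty$ is what produces the infinite-dimensional eigenspace $\ker(\Dirac_{+\infty}+m)$, which consists of functions $(0,w)^\intercal$ with $\sigma\cdot\nabla w=0$ that are in $L^2(\Omega)^2$ but \emph{not} in $H^1(\Omega)^2$. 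Concretely, the lower components $v_\tau$ of your normalized eigenfunctions can concentrate along such a weakly-null sequence, so both packagings you propose (weak-limit eigenfunction and functional calculus) end with the same unproven step: ruling out $\varphi_\tau\rightharpoonup 0$. Neither \cite[Theorem 1.5]{Mas2022} nor the definition \eqref{def:Dirac_tau} supplies the missing uniform bound.

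The paper sidesteps this entirely. Once $\ell<+\infty$ is known, \Cref{AMSV:eig_curv}$(ii)$ (already established in \cite{Mas2022}) applies to the continuous curve $\tau\mapsto\lambda_1^+(\tau)$ and forces $\ell=\sqrt{\lambda_D+m^2}$ for some $\lambda_D\in\sigma(-\Delta_D)$; in particular $\ell\ge\sqrt{\min\sigma(-\Delta_D)+m^2}=\mu$. So the lower bound is a one-line citation, and the only new content in the corollary is the upper bound you already have. You should replace your contradiction argument by this reference.
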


By the argument provided at the beginning of this section, 
$\Dirac_\tau$ does not converge in the norm resolvent sense to $\Dirac_{\pm \infty}$ as $\tau\to \pm \infty$, since $\mp m\in\sigma_{\operatorname{ess}}(\Dirac_{\pm\infty})$ (and, therefore, 
$\sigma_{\operatorname{ess}}(\Dirac_{\pm\infty})\neq\emptyset$). It is then natural to ask whether the norm resolvent convergence could be achieved if, in some sense, the study was restricted to 
$\sigma(\Dirac_{\pm\infty})\setminus\{\mp m\}$.
An affirmative answer holds true in the following sense. Denote 
\begin{equation}
\begin{split}
&\ker(\Dirac_{\pm\infty}\pm m):=\{\psi \in \operatorname{Dom}(\Dirac_{\pm\infty})\subset L^2(\Omega)^4: 
(\Dirac_{\pm\infty} \pm m)\psi=0\},\\
&\ker(\Dirac_{\pm\infty}\pm m)^{\perp}:=
\{\varphi\in L^2(\Omega)^4: 
\langle \varphi, \psi\rangle_{L^2(\Omega)^4}=0\text{ for all }
\psi\in\ker(\Dirac_{\pm\infty}\pm m)\}.
\end{split}
\end{equation}
Since $\ker(\Dirac_{\pm\infty}\pm m)^{\perp}$ is a closed subspace of $L^2(\Omega)^4$, the orthogonal projection 
\begin{equation}\label{def:ort_proj}
P_{\pm}:L^2(\Omega)^4\to \ker(\Dirac_{\pm\infty}\pm m)^{\perp}\subset L^2(\Omega)^4
\end{equation}
is a well-defined bounded self-adjoint operator in $L^2(\Omega)^4$. Moreover, from \eqref{SpectraPMIn} we know that $\ker(\Dirac_{\pm\infty}\pm m)^{\perp}\neq\{0\}$ and, thus, $\|P_{\pm}\|_{L^2(\Omega)^4\to L^2(\Omega)^4}=1$.

\begin{theorem}\label{thm:res_conv_2}
Given $\tau\in\R$, let $\Dirac_\tau$ be the operator defined in \eqref{def:Dirac_tau}. Let $\Dirac_{+ \infty}$ and 
$\Dirac_{-\infty}$ be the operators defined in \eqref{def:Dirac_inft} and \eqref{def:Dirac_-inft}, respectively. 
Then,
\begin{equation}
\lim_{\tau\to\pm\infty}
\big\|P_{\pm}\big((\Dirac_{\pm\infty}-\lambda)^{-1} - (\Dirac_\tau-\lambda)^{-1}\big)\big\|_{L^2(\Omega)^4\to L^2(\Omega)^4}=0\quad\text{for all $\lambda\in\C\setminus\R$,}
\end{equation}
where $P_\pm$ are the orthogonal projections defined in \eqref{def:ort_proj}.
\end{theorem}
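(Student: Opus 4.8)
The plan is to deduce the statement from the strong resolvent convergence of \Cref{thm:res_conv_1} together with a compactness argument. We treat the limit $\tau\uparrow+\infty$; the case $\tau\downarrow-\infty$ is analogous, and in fact reduces to it via the unitary involution underlying the spectral symmetry $\sigma(\Dirac_\tau)=-\sigma(\Dirac_{-\tau})$, which also intertwines $\Dirac_{+\infty}$ with $\Dirac_{-\infty}$ and $P_+$ with $P_-$. Set $R_\tau(\lambda):=(\Dirac_\tau-\lambda)^{-1}$ and $R_\infty(\lambda):=(\Dirac_{+\infty}-\lambda)^{-1}$, and note that $\operatorname{Id}-P_+$ is the orthogonal projection onto $\ker(\Dirac_{+\infty}+m)$, which reduces $\Dirac_{+\infty}$. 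By \eqref{SpectraPMIn} and the discreteness of $\sigma(-\Delta_D)$, $-m$ is an isolated point of $\sigma(\Dirac_{+\infty})$, so the restriction of $\Dirac_{+\infty}$ to $\operatorname{ran}P_+=\ker(\Dirac_{+\infty}+m)^\perp$ has purely discrete spectrum accumulating only at $\pm\infty$; hence $P_+R_\infty(\lambda)=R_\infty(\lambda)P_+$ is \emph{compact} for every $\lambda\in\C\setminus\R$. Finally, \Cref{thm:res_conv_1} applied at $\lambda$ and at $\bar\lambda$ gives that $P_+R_\tau(\lambda)\to P_+R_\infty(\lambda)$ and its adjoint $R_\tau(\bar\lambda)P_+\to R_\infty(\bar\lambda)P_+$ in the strong operator topology as $\tau\to+\infty$.

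With this in hand, norm convergence follows from the elementary principle that bounded operators $K_\tau$ with $K_\tau\to K$ and $K_\tau^{*}\to K^{*}$ strongly, $K$ compact, and such that every sequence $(K_{\tau_n}f_n)_n$ with $\tau_n\to+\infty$ and $\sup_n\|f_n\|<\infty$ has a norm-convergent subsequence, must satisfy $\|K_\tau-K\|\to0$. Indeed, otherwise there are $\tau_n\to+\infty$ and unit vectors $f_n$ with $\|(K_{\tau_n}-K)f_n\|\ge c>0$; passing to subsequences so that $f_n\rightharpoonup f$, $K_{\tau_n}f_n\to y$, and $Kf_n\to Kf$ (compactness of $K$), the identity $\langle K_{\tau_n}(f_n-f),z\rangle=\langle f_n-f,K_{\tau_n}^{*}z\rangle\to0$ (using $f_n-f\rightharpoonup0$ and $K_{\tau_n}^{*}z\to K^{*}z$) shows $K_{\tau_n}f_n\rightharpoonup Kf$, so $y=Kf$ and $\|(K_{\tau_n}-K)f_n\|=\|K_{\tau_n}f_n-Kf_n\|\to0$, a contradiction. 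Applying this with $K_\tau=P_+R_\tau(\lambda)$ reduces the theorem to the following precompactness claim: for every $\lambda\in\C\setminus\R$, every $\tau_n\to+\infty$, and every bounded sequence $(f_n)$ in $L^2(\Omega)^4$, the sequence $(P_+R_{\tau_n}(\lambda)f_n)_n$ has a norm-convergent subsequence in $L^2(\Omega)^4$.

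To prove the claim, put $\varphi_n:=R_{\tau_n}(\lambda)f_n\in\operatorname{Dom}(\Dirac_{\tau_n})\subset H^1(\Omega)^4$. From $\|\varphi_n\|_{L^2}\le|\im\lambda|^{-1}\|f_n\|_{L^2}$ and $\|(-i\alpha\cdot\nabla+m\beta)\varphi_n\|_{L^2}=\|f_n+\lambda\varphi_n\|_{L^2}$ we obtain uniform bounds on $\|\varphi_n\|_{L^2}$ and $\|\alpha\cdot\nabla\varphi_n\|_{L^2}$, hence, writing $\varphi_n=(u_n,v_n)^\intercal$, on $\|\sigma\cdot\nabla u_n\|_{L^2}$ and $\|\sigma\cdot\nabla v_n\|_{L^2}$. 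The quantities $\|\varphi_n\|_{H^1}$ need not stay bounded, and the point is that this loss of regularity is carried entirely by $\ker(\Dirac_{+\infty}+m)$: the boundary condition of $\Dirac_{\tau_n}$ reads $u_n=-ie^{-\tau_n}(\sigma\cdot\nu)v_n$ on $\partial\Omega$, and by \eqref{def:Dirac_inft}, $\ker(\Dirac_{+\infty}+m)=\{(0,v)^\intercal:v\in L^2(\Omega)^2,\ \sigma\cdot\nabla v=0\ \text{in}\ \Omega\}$ (the null solutions of the chiral Dirac operator, sitting in the lower spinor component). The aim is a splitting $\varphi_n=\varphi_n^{\mathrm{reg}}+\varphi_n^{\mathrm{sing}}+\varphi_n^{\circ}$ with $\sup_n\|\varphi_n^{\mathrm{reg}}\|_{H^{s}(\Omega)^4}<\infty$ for some $s>0$, $\varphi_n^{\mathrm{sing}}\in\ker(\Dirac_{+\infty}+m)$, and $\|\varphi_n^{\circ}\|_{L^2}\to0$. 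Granting it, $P_+\varphi_n=P_+\varphi_n^{\mathrm{reg}}+P_+\varphi_n^{\circ}$ (since $P_+$ annihilates $\ker(\Dirac_{+\infty}+m)$), the second term tends to $0$ in $L^2$, and the first lies in the image under $P_+$ of a bounded ball of $H^s(\Omega)^4$, which is relatively compact in $L^2(\Omega)^4$ by Rellich--Kondrachov; the precompactness claim follows.

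Constructing the splitting is, I expect, the main obstacle. The natural route is a Krein-type resolvent formula for the self-adjoint extensions $\Dirac_\tau$ and $\Dirac_{+\infty}$ of the minimal Dirac operator on $\Omega$, available from \cite{Mas2022,Holzmann2021}, of the schematic form $R_\tau(\lambda)-R_\infty(\lambda)=\Lambda_\lambda\,W_\tau(\lambda)\,\Lambda_{\bar\lambda}^{*}$, where $\Lambda_\lambda$ is a fixed ($\tau$-independent, $\lambda$-analytic) bounded operator from a boundary Hilbert space into $L^2(\Omega)^4$ with range contained in $\ker(-i\alpha\cdot\nabla+m\beta-\lambda)$, and $W_\tau(\lambda)=(e^{\tau}\mathcal A+\mathcal B(\lambda))^{-1}$ with $\mathcal A\ge0$ a fixed boundary operator. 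On the part of the boundary space where $\mathcal A$ is bounded below one has $W_\tau(\lambda)=O(e^{-\tau})$, and combined with the smoothing of $\Lambda_\lambda$ this contribution lands in a fixed bounded subset of $H^{s}(\Omega)^4$; on the resonant directions attached to $\ker\mathcal A$, $W_\tau(\lambda)$ does not decay, but there $\Lambda_\lambda$ maps into $\ker(\Dirac_{+\infty}+m)$ — these directions are precisely the boundary data of the null solutions of $\sigma\cdot\nabla$ composing that kernel — so $P_+$ annihilates this part. The delicate task is to make the dichotomy quantitative on the near-resonant directions, where the $e^{-\tau}$ gain of $W_\tau(\lambda)$ must be weighed against the (small but nonzero) distance from $\operatorname{ran}\Lambda_\lambda$ to $\ker(\Dirac_{+\infty}+m)$, and to establish the required uniform-in-$\tau$ mapping properties of $\Lambda_\lambda$ and of the compressed boundary operators; with these at hand, decomposing $\Lambda_\lambda W_\tau(\lambda)\Lambda_{\bar\lambda}^{*}$ into resonant, non-resonant, and error parts and adding $R_\infty(\lambda)f_n$ yields the splitting of $\varphi_n$ above. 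The argument treats all $\lambda\in\C\setminus\R$ at once.
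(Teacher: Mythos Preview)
Your reduction of the problem to collective compactness (precompactness of $(P_+R_{\tau_n}(\lambda)f_n)_n$) together with strong convergence of $P_+R_\tau(\lambda)$ and of its adjoint is sound, and it is a genuinely different strategy from the paper's. The paper proceeds by direct estimation of the bilinear form: for $f,g\in L^2(\Omega)^4$ it sets $\psi_\tau=(\Dirac_\tau-\lambda)^{-1}f$ and $\varphi=(\Dirac_{+\infty}-\bar\lambda)^{-1}P_+g$, checks that $\varphi\in\operatorname{Dom}(\Dirac_{+\infty})\cap\ker(\Dirac_{+\infty}+m)^\perp$, integrates by parts to reduce the pairing to a boundary term, and then uses the boundary condition to extract an explicit factor $e^{-\tau}$. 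The crucial ingredient is \Cref{Lemma:AlphaGradEstimates_ort}: $\operatorname{Dom}(\Dirac_{+\infty})\cap\ker(\Dirac_{+\infty}+m)^\perp\hookrightarrow H^1(\Omega)^4$ continuously with respect to the graph norm, which is what allows the $H^{1/2}(\partial\Omega)$ trace of $\varphi$ to be controlled. The paper's route thus yields the quantitative rate $O(e^{-\tau})$, which your compactness argument, even if completed, would not.

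Your proposal, however, has a genuine gap at the precompactness step, and the Krein-formula mechanism you sketch for it is incorrect. You assert that on the resonant directions $\ker\mathcal A$ the gamma field $\Lambda_\lambda$ maps into $\ker(\Dirac_{+\infty}+m)$, so that $P_+$ annihilates that contribution. But you also (correctly) state that $\operatorname{ran}\Lambda_\lambda\subset\ker(-i\alpha\cdot\nabla+m\beta-\lambda)$, whereas $\ker(\Dirac_{+\infty}+m)$ consists of $(0,v)^\intercal$ with $\sigma\cdot\nabla v=0$, which are eigenfunctions of the differential expression with eigenvalue $-m$. For $\lambda\in\C\setminus\R$ these two null spaces meet only in $\{0\}$, so $P_+$ does \emph{not} kill the resonant part of $\Lambda_\lambda W_\tau(\lambda)\Lambda_{\bar\lambda}^*$ by this reasoning; the ``delicate task'' you flag is not merely delicate but rests on a false premise. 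The precompactness can in fact be established, but by an argument that amounts to the paper's key lemma rather than a Krein decomposition: writing $P_+\varphi_n=(u_n,P^\perp v_n)^\intercal$ with $P^\perp$ the $L^2(\Omega)^2$-projection off $\ker(\sigma\cdot\nabla)$, one gets a uniform $H^1$ bound on $P^\perp v_n$ from \Cref{Lemma:SigmaGradEstimates_ort} (since $\sigma\cdot\nabla P^\perp v_n=\sigma\cdot\nabla v_n$), and a uniform $H^1$ bound on $u_n$ from the conjugation in \Cref{Lemma:FromHtautoH0} together with the $\tau=0$ estimate in the proof of \Cref{Lemma:boundH1normHtau}. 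So the piece you are missing is precisely the regularity statement the paper proves, not a boundary-triple dichotomy.
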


As we already mentioned, the difference of resolvents 
$(\Dirac_{\pm\infty}-\lambda)^{-1} - (\Dirac_\tau-\lambda)^{-1}$
does not converge to zero in norm as $\tau \to \pm \infty$. However, if we write this difference as
\begin{equation}
\begin{split}
    (\Dirac_{\pm\infty}-\lambda)^{-1} - (\Dirac_\tau-\lambda)^{-1} &= \big(P_{\pm}+(1-P_\pm)\big)\big((\Dirac_{\pm\infty}-\lambda)^{-1} - (\Dirac_\tau-\lambda)^{-1}\big),
\end{split}
\end{equation}
then \Cref{thm:res_conv_2} shows that the eigenvalue $\mp m$ is indeed the only obstruction for having norm resolvent convergence of $\Dirac_\tau$ to $\Dirac_{\pm \infty}$ as $\tau \to \pm \infty$.

The proofs of \Cref{thm:res_conv_1,thm:res_conv_2} are given in \Cref{s:strong_res_conv,s:norm_res_conv}, respectively. In both proofs, we will only address the case $\tau\uparrow+\infty$, and 
$\varphi =(u, v)^\intercal$ will denote an element of 
$\operatorname{Dom}(\Dirac_{+\infty})$. The case 
$\tau\downarrow-\infty$ follows by analogous arguments (and will be omitted): one simply has to interchange the roles of $u$ and $v$ within the proofs. The proof of \Cref{LimitFirstEigenvalue} is given at the end of \Cref{s:strong_res_conv}. 

Concerning the approaches used in the proofs,  \Cref{thm:res_conv_1} will follow from showing that $\Dirac_{\pm \infty}$ is the strong graph limit of $\Dirac_\tau$ as $\tau\to\pm\infty$ and the equivalence of the notions of strong resolvent convergence and strong graph limit in the case of self-adjoint operators; see \cite[Theorem VIII.26]{ReedSimon1980}. Instead, for the proof of \Cref{thm:res_conv_2} we will directly estimate the pairing 
$$\big\langle P_{\pm}\big((\Dirac_{\pm\infty}-\lambda)^{-1} - (\Dirac_\tau-\lambda)^{-1}\big) f, g\big\rangle_{L^2(\Omega)^4}$$ 
in terms of $\|f\|_{L^2(\Omega)^4}$, $\|g\|_{L^2(\Omega)^4}$, and $\tau$. A key step of this approach is to show that the inclusion of 
$\mathrm{Dom}(\Dirac_{\pm\infty})\cap\ker(\Dirac_{\pm\infty}\pm m)^{\perp}$ in $H^1(\Omega)^4$ is well defined and continuous with respect to the graph norm of $\Dirac_{\pm\infty}$; see \Cref{Lemma:AlphaGradEstimates_ort}.

We mention that one can also prove \Cref{thm:res_conv_1} with the approach used to prove \Cref{thm:res_conv_2}, but now estimating
\begin{equation} \label{eq:pairing}
    \big\langle \big((\Dirac_{\pm\infty}-\lambda)^{-1} - (\Dirac_\tau-\lambda)^{-1}\big) f, g\big\rangle_{L^2(\Omega)^4}
\end{equation} 
and using a density argument that leads to the fact that 
$(\Dirac_\tau-\lambda)^{-1}$ converges weakly to $(\Dirac_{\pm\infty}-\lambda)^{-1}$ as $\tau \to \pm \infty$; see \cite[Section 3.2]{Duran2024} for the details. With this ingredient in hand, \Cref{thm:res_conv_1} follows from the equivalence of the notions of strong resolvent convergence and weak resolvent convergence in the setting of self-adjoint operators; see \cite[Lemma 6.37]{Teschl2014}, \cite[Exercise 20.$(a)$ of Chapter VIII]{ReedSimon1980}, or \cite[Theorem 2.16]{Duran2024}.

Although the main interest in the present article is the study of the convergence in a resolvent sense as $\tau \to \pm \infty$, we  mention that the study for $\tau$ tending to any finite value $\tau_0\in\R$ was carried out in \cite[Theorem 1.10]{Duran2024}, where the following result is proven.

\begin{theorem} \label{thm:res_conv_norm} 
Given $\tau \in\R$, let $\Dirac_\tau$ be the operator defined in \eqref{def:Dirac_tau}. Then, for every $\tau_0\in\R$, $\Dirac_\tau$ converges to $\Dirac_{\tau_0}$ in the norm resolvent sense as $\tau \to \tau_0$.
\end{theorem}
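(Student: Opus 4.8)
The plan is to establish norm resolvent convergence of $\Dirac_\tau$ to $\Dirac_{\tau_0}$ as $\tau\to\tau_0$ by controlling the difference of resolvents through the boundary condition, exploiting the fact that for $\tau$ near $\tau_0$ all the operators $\Dirac_\tau$ have domains inside $H^1(\Omega)^4$ (in sharp contrast with the $\tau\to\pm\infty$ regime). First I would fix $\lambda\in\C\setminus\R$ (which lies in the resolvent set of every $\Dirac_\tau$, since $\sigma(\Dirac_\tau)\subset\R$) and, given $f\in L^2(\Omega)^4$, set $\varphi_\tau:=(\Dirac_\tau-\lambda)^{-1}f$ and $\varphi_{\tau_0}:=(\Dirac_{\tau_0}-\lambda)^{-1}f$. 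The key identity is that $w_\tau:=\varphi_\tau-\varphi_{\tau_0}$ satisfies $(-i\alpha\cdot\nabla+m\beta-\lambda)w_\tau=0$ in $\Omega$, so $w_\tau$ is a solution of the homogeneous Dirac equation whose only ``source'' is the mismatch of boundary conditions: $\varphi_\tau$ satisfies the $\tau$-boundary condition while $\varphi_{\tau_0}$ satisfies the $\tau_0$-one. The goal is to show $\|w_\tau\|_{L^2(\Omega)^4}\le C\,|\tau-\tau_0|\,\|f\|_{L^2(\Omega)^4}$ with $C$ uniform for $\tau$ in a neighborhood of $\tau_0$.

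The natural route is to introduce the Dirichlet-type boundary map (Poisson operator) $E_\lambda$ that solves $(-i\alpha\cdot\nabla+m\beta-\lambda)\psi=0$ with prescribed trace, and the associated boundary integral operator $C_\lambda$ (the Cauchy operator for the Dirac equation); this machinery is already available from \cite{Behrndt2020,Mas2022}. One writes the boundary condition of $\Dirac_\tau$ in projector form using $B_\tau:=i(\sinh\tau-\cosh\tau\,\beta)(\alpha\cdot\nu)$, so that $\varphi\in\mathrm{Dom}(\Dirac_\tau)$ iff $(\mathrm{Id}-B_\tau)\varphi|_{\partial\Omega}=0$. Then the resolvent admits a Krein-type formula $(\Dirac_\tau-\lambda)^{-1}=(\Dirac_{\mathrm{free/MIT}}-\lambda)^{-1}+E_\lambda\,(\text{boundary correction depending on }\tau)\,E_{\bar\lambda}^*$, in which the dependence on $\tau$ is entirely through $B_\tau$, which is real-analytic (in fact smooth with uniformly bounded derivatives on compact $\tau$-intervals) as a multiplication operator on $L^2(\partial\Omega)^4$. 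Subtracting the formulas for $\tau$ and $\tau_0$ and using a second resolvent-type identity on the boundary, the difference is a product of $\tau$-uniformly bounded operators times $(B_\tau-B_{\tau_0})$, and $\|B_\tau-B_{\tau_0}\|_{L^2(\partial\Omega)^4\to L^2(\partial\Omega)^4}\le C|\tau-\tau_0|$ since $\sinh$ and $\cosh$ are locally Lipschitz. This yields the claimed norm bound on the resolvent difference and hence norm resolvent convergence.

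The main obstacle I anticipate is the uniform invertibility of the boundary operator that appears in the Krein formula: one must show that the relevant operator $\mathrm{Id}-B_\tau(\tfrac12+C_\lambda)$ (or whatever precise form the parametrization of $\mathrm{Dom}(\Dirac_\tau)$ dictates) is boundedly invertible on $L^2(\partial\Omega)^4$ \emph{with a bound uniform for $\tau$ near $\tau_0$}. Invertibility for each fixed $\tau$ is equivalent to $\lambda\in\rho(\Dirac_\tau)$, which holds; the uniformity then follows from the continuity $\tau\mapsto B_\tau(\tfrac12+C_\lambda)$ in operator norm together with the openness of the set of invertible operators and a compactness argument on $[\tau_0-1,\tau_0+1]$. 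A secondary technical point is making sure the trace and Poisson operators are bounded between the correct Sobolev spaces on $\partial\Omega$ (here the $C^2$ regularity of $\partial\Omega$ and the elliptic regularity theory for the Dirac operator from \cite{Behrndt2020,Mas2022} suffice), so that all the factors in the product estimate are genuinely bounded on $L^2$. Once these uniform-in-$\tau$ bounds are in place, the conclusion is a short computation; since this result is not original to the present paper, I would simply cite \cite[Theorem 1.10]{Duran2024} for the full details and include only this sketch.
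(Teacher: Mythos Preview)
Your proposal is correct but takes a genuinely different route from the paper's own proof. You argue via a Krein-type resolvent formula: the $\tau$-dependence sits entirely in the boundary matrix $B_\tau$, and the difference of resolvents factors through $B_\tau-B_{\tau_0}$ times operators that are uniformly bounded for $\tau$ near $\tau_0$. This is essentially the analyticity argument from \cite{Duran2024} and \cite[Lemma~3.1]{Mas2022}, and the uniform-invertibility obstacle you flag is handled exactly as you suggest. The paper instead gives a more elementary argument in the spirit of its proof of \Cref{thm:res_conv_2}: for $f,g\in L^2(\Omega)^4$ one sets $\psi_\tau=(\Dirac_\tau-i)^{-1}f$, $\varphi=(\Dirac_{\tau_0}+i)^{-1}g$, integrates by parts to reduce the pairing $\langle W_\tau f,g\rangle$ to the boundary term $-i\langle(\alpha\cdot\nu)\psi_\tau,\varphi\rangle_{L^2(\partial\Omega)^4}$, and then uses the explicit boundary conditions on both sides to extract the factor $\sinh\tfrac{\tau-\tau_0}{2}/\cosh\tfrac{\tau+\tau_0}{2}$; the remaining boundary pairing is controlled by an $H^{-1/2}\times H^{1/2}$ estimate (\Cref{Lemma:SigmaGradEstimates}) together with the quantitative $L^2\to H^1$ bound on the resolvent (\Cref{thm:ContinuityResolvent}). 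Your approach buys a cleaner structural explanation and connects directly to the existing boundary-triple machinery; the paper's approach avoids the Poisson/Cauchy operators entirely, requires no uniform-invertibility argument, and yields an explicit rate with all constants tracked.
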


The proof in \cite{Duran2024} of this theorem is based on the fact that the resolvent operator 
$(\Dirac_\tau-\lambda)^{-1}$ is real analytic in $\tau$ in a neighborhood of $\tau_0$; see the proof of \cite[Lemma~3.1]{Mas2022} for this last assertion. As a final contribution of this article, in \Cref{Sec:Continuity_Resolvent} we give another proof of \Cref{thm:res_conv_norm} in the line of our proof of \Cref{thm:res_conv_2}, that is, via estimating the pairing 
\begin{equation} \label{eq:pairing_tau0}
    \big\langle \big((\Dirac_{\tau_0}-\lambda)^{-1} - (\Dirac_\tau-\lambda)^{-1}\big) f, g\big\rangle_{L^2(\Omega)^4}
\end{equation}
in terms of $\|f\|_{L^2(\Omega)^4}$, $\|g\|_{L^2(\Omega)^4}$,  $\tau$, and $\tau_0$. In addition, in \Cref{Sec:Continuity_Resolvent} we give a quantitative estimate in terms of $\tau$ of the norm of $(\Dirac_\tau-\lambda)^{-1}$ as a bounded operator from $L^2(\Omega)^4$ to $H^1(\Omega)^4$; see
\Cref{thm:ContinuityResolvent}. The latter may be of interest for future references.

\subsection{Quantum dots in $\R^2$}\label{ss:quantum_dots}
The two-dimensional analogue of the Dirac differential operator $$-i\alpha\cdot\nabla+m\beta=-i(\alpha_1\partial_1+\alpha_2\partial_2+\alpha_3\partial_3)+m\beta$$ is obtained  replacing the vectors $(\alpha_1,\alpha_2,\alpha_3)$ and $(\partial_1,\partial_2,\partial_3)$ by $(\sigma_1,\sigma_2)$ and $(\partial_1,\partial_2)$, respectively, and $\beta$ by $\sigma_3$. In this regard, for 
$\Omega\subset \R^2$ the corresponding Dirac operator 
$\Dirac_\tau$ would be
\begin{equation}\label{def:Dirac_tau_2}
	\begin{split}
		\mathrm{Dom}(\Dirac_\tau) &:= \big\{ \varphi \in H^1(\Omega)^2: \, \varphi =i (\sinh\tau- \cosh\tau \, \sigma_3)( \sigma_1\nu_1+\sigma_2 \nu_2 ) \varphi  \,\text{ on } \partial \Omega \big\},\\
		\Dirac_\tau\varphi &:= (-i(\sigma_1\partial_1+\sigma_2\partial_2)+m\sigma_3)\varphi \quad\text{for all 
			$\varphi\in\mathrm{Dom}(\Dirac_\tau),$}
	\end{split}
\end{equation}
and analogously for the two-dimensional version of 
$\Dirac_{\pm\infty}$.

Motivated by their applications in the description of graphene quantum dots and nano-ribbons, in \cite{Benguria2017Self,Benguria2017Spectral} it was studied the following family of two-dimensional Dirac operators, which we shall see afterward that is closely related to the family $\{\Dirac_\tau\}_{\tau\in\R\cup\{\pm\infty\}}$.
Given $\Omega\subset \R^2$ and $\eta\in(-\pi,\pi]\setminus\{-\pi/2,\pi/2\}$, let $\D_\eta$ be the operator in $L^2(\Omega)^2$ defined by
\begin{equation}\label{def:Dirac_eta_2}
	\begin{split}
		\mathrm{Dom}(\D_\eta) &:= \big\{ \varphi \in H^1(\Omega)^2: \, \varphi =(\cos\eta\,(\sigma_1 t_1+\sigma_2 t_2)+\sin\eta\, \sigma_3) \varphi  \,\text{ on } \partial \Omega \big\},\\
		\D_\eta\varphi &:= (-i(\sigma_1\partial_1+\sigma_2\partial_2)+m\sigma_3)\varphi \quad\text{for all 
			$\varphi\in\mathrm{Dom}(\D_\eta),$}
	\end{split}
\end{equation}
where $t$ denotes the unit vector tangent to
$\partial\Omega$ with the orientation of $t$ chosen such that $\{\nu,t\}$ is positively oriented. The so-called infinite mass boundary conditions correspond to $\eta\in\{0, \pi\}$. The  zigzag boundary conditions formally correspond to 
$\eta\in\{-\pi/2,\pi/2\}$, and give rise to the operators 
$\D_{\pm\pi/2}$ defined by
\begin{equation}
\begin{split}
\mathrm{Dom}(\D_{-\pi/2}) 
&:= \big\{ \varphi= (u, v)^\intercal : \, u \in H^1_0(\Omega),  v \in L^2(\Omega), (\sigma_1\partial_1+\sigma_2\partial_2) \varphi \in L^2(\Omega)^2 \big\},\\
\D_{-\pi/2}\varphi &:= (-i(\sigma_1\partial_1+\sigma_2\partial_2) + m \sigma_3)\varphi \quad\text{for all $\varphi\in\mathrm{Dom}(\D_{-\pi/2})$}
\end{split}
\end{equation}
and
\begin{equation}
\begin{split}
\mathrm{Dom}(\D_{\pi/2}) 
&:= \big\{ \varphi= (u, v)^\intercal : \, u \in L^2(\Omega), v \in H^1_0(\Omega), (\sigma_1\partial_1+\sigma_2\partial_2) \varphi \in L^2(\Omega)^2 \big\},\\
\D_{\pi/2}\varphi &:= (-i(\sigma_1\partial_1+\sigma_2\partial_2) + m \sigma_3)\varphi \quad\text{for all $\varphi\in\mathrm{Dom}(\D_{\pi/2}).$}
\end{split}
\end{equation}

The purpose of this section is to clarify the relation between the families $\{\Dirac_\tau\}_{\tau\in\R\cup\{\pm\infty\}}$ (described in \eqref{def:Dirac_tau_2}) and $\{\D_\eta\}_{\eta\in(-\pi,\pi]}$, and to see how the results of the present paper apply to the setting of graphene quantum dots. 

A simple computation shows that 
$\sigma_1 t_1+\sigma_2 t_2=i( \sigma_1\nu_1+\sigma_2 \nu_2 )\sigma_3$.
Using this identity together with the algebraic properties of the matrices $\sigma_1$, $\sigma_2$, and $\sigma_3$ it is straightforward to check that the boundary condition $\varphi =i (\sinh\tau- \cosh\tau \, \sigma_3)( \sigma_1\nu_1+\sigma_2 \nu_2 ) \varphi$ for $\Dirac_\tau$ is equivalent to 
$$\varphi =\Big(\frac{1}{\cosh\tau}(\sigma_1 t_1+\sigma_2 t_2)- \frac{\sinh\tau}{\cosh\tau} \, \sigma_3\Big) \varphi.$$
This can be rewritten as $\varphi =(\cos\eta\,(\sigma_1 t_1+\sigma_2 t_2)+\sin\eta\, \sigma_3) \varphi$ if we take $\eta$ such that
\begin{equation}\label{genMIT_quantdot}
\cos\eta=\frac{1}{\cosh\tau} \quad\text{and}\quad 
\sin\eta=-\frac{\sinh\tau}{\cosh\tau}.
\end{equation}
Indeed, since $(1/\cosh\tau)^2+(-\sinh\tau/\cosh\tau)^2=1$ and $0<1/\cosh\tau\leq1$, for every 
$\tau\in\R$ there exists a unique $\eta\in(-\pi/2,\pi/2)$ such that 
\eqref{genMIT_quantdot} holds, and vice versa. This shows that 
$\{\Dirac_{\tau}\}_{\tau\in\R}=\{\D_{\eta}\}_{\eta\in(-\pi/2,\pi/2)}$ via the correspondence \eqref{genMIT_quantdot}.\footnote{This correspondence between (the two-dimensional version of) the generalized MIT bag model $\Dirac_\tau$ and the quantum dot boundary condition in $\D_\eta$ was first pointed out to us by B. Cassano in July 2022, and mentioned later on in the introduction of \cite{Behrndt2023}.} Similarly, 
if we define 
\begin{equation}
	\begin{split}
		\mathrm{Dom}(\mathcal{T}_\tau) &:= \big\{ \varphi \in H^1(\Omega)^2: \, \varphi =i (\sinh\tau+ \cosh\tau \, \sigma_3)( \sigma_1\nu_1+\sigma_2 \nu_2 ) \varphi  \,\text{ on } \partial \Omega \big\},\\
		\mathcal{T}_\tau\varphi &:= (-i(\sigma_1\partial_1+\sigma_2\partial_2)+m\sigma_3)\varphi \quad\text{for all 
			$\varphi\in\mathrm{Dom}(\mathcal{T}_\tau)$}
	\end{split}
\end{equation}
for $\tau\in\R$ (note that the definition of $\mathcal{T}_\tau$ is the same as the one of $\Dirac_\tau$ in \eqref{def:Dirac_tau_2} except for replacing 
$\cosh\tau$ by $-\cosh\tau$ in the boundary condition), then
$\{\mathcal{T}_{\tau}\}_{\tau\in\R}=\{\D_\eta\}_{\eta\in(-\pi,-\pi/2)\cup(\pi/2,\pi]}$ via the correspondence
\begin{equation}
\cos\eta=-\frac{1}{\cosh\tau} \quad\text{and}\quad 
\sin\eta=\frac{\sinh\tau}{\cosh\tau}.
\end{equation}
Finally, in what regards the boundary condition, the cases 
$\eta=\pm \pi/2$ for $\D_\eta$ correspond to $\tau=\mp\infty$ for $\Dirac_\tau$ and to $\tau=\pm\infty$ for $\mathcal{T}_\tau$, respectively. 

Our proofs of \Cref{thm:res_conv_1,thm:res_conv_2} adapt with no difficulties to the two-dimensional version of 
$\Dirac_\tau$ given in \eqref{def:Dirac_tau_2}, and also to 
$\mathcal{T}_\tau$.
Hence, taking into account the previous observations, we get that \Cref{thm:res_conv_1,thm:res_conv_2} hold true replacing 
$\Dirac_\tau$ by $\D_\eta$, 
$\Dirac_{\pm\infty}$ by $\D_{\pm\pi/2}$, $\tau\to\pm\infty$ by $\eta\to\pm\pi/2$, 
and $\ker(\Dirac_{\pm\infty}\pm m)^{\perp}$ by
$\ker(\D_{\pm\pi/2}\mp m)^{\perp}$. In the same way, the analogue of \Cref{thm:res_conv_norm} for $\D_\eta$ with 
$\eta\to\eta_0\in(-\pi,\pi]\setminus\{-\pi/2,\pi/2\}$ holds true.

\subsection{Notation}\label{ss:notation}
In this section we recall some basic notation regarding the Hilbert spaces and associated norms to be used throughout the paper. 

In the sequel, $\Omega$ denotes a bounded domain in 
$\R^3$ with $C^2$ boundary. Let $d\geq1$ and $k\geq1$ be integers. We denote by $L^2(\Omega)^d$ the Hilbert space of  functions 
$\varphi:\Omega\to\C^d$ endowed with the scalar product and the associated norm
\begin{equation}
\langle \varphi,\psi\rangle_{L^2(\Omega)^d}:=\int_{\Omega} \varphi\cdot\overline\psi \,dx \quad\text{and}\quad
\|\varphi\|_{L^2(\Omega)^d}:=\langle \varphi,\varphi\rangle_{L^2(\Omega)^d}^{1/2},
\end{equation}
respectively. We denote by
$H^k(\Omega)^d$ the Sobolev space of 
functions in $L^2(\Omega)^d$ with weak partial derivatives up to order $k$ in $L^2(\Omega)^d$, and $H_0^k(\Omega)^d$ denotes the closure with respect to the $H^k(\Omega)^d$-norm of the set of smooth functions compactly supported in 
$\Omega$. 

Similarly, $L^2(\partial\Omega)^d$ denotes the Hilbert space of  functions $\varphi:\partial\Omega\to\C^d$ endowed with the scalar product and the associated norm
\begin{equation}
\langle \varphi,\psi\rangle_{L^2(\partial\Omega)^d}:=\int_{\partial\Omega} \varphi\cdot\overline\psi \,d\upsigma \quad\text{and}\quad
\|\varphi\|_{L^2(\partial\Omega)^d}:=\langle \varphi,\varphi\rangle_{L^2(\partial\Omega)^d}^{1/2},
\end{equation}
respectively, where $\upsigma$ denotes the surface measure on $\partial\Omega$.
For $s\in(0,1)$, we denote by $H^s(\partial \Omega)^d$ the fractional Sobolev space of functions $\varphi\in L^2(\partial\Omega)^d$ such that 
    $$
        \|\varphi\|_{H^s(\partial \Omega)^d}:= \Big( \int_{\partial\Omega} |\varphi|^2\,d\upsigma + \int_{\partial\Omega}\int_{\partial\Omega}\frac{|\varphi(x)-\varphi(y)|^2}{|x-y|^{2+2s}} \,d\upsigma(y)\,d\upsigma(x) \Big)^{1/2} < +\infty.
    $$
    The continuous dual of $H^s(\partial \Omega)^d$ is denoted by $H^{-s}(\partial \Omega)^d$. The action of $\psi\in H^{-s}(\partial \Omega)^d$ on $\varphi \in H^s(\partial \Omega)^d$ is denoted by $\langle \psi, \varphi \rangle_{H^{-s}(\partial\Omega)^d, H^s(\partial\Omega)^d}$, and the norm in $H^{-s}(\partial \Omega)^d$ is
    $$
        \|\psi\|_{H^{-s}(\partial \Omega)^d}:= 
        { \sup_{\|\varphi\|_{H^s(\partial \Omega)^d}\leq1}} 
        \langle \psi, \varphi \rangle_{H^{-s}(\partial\Omega)^d, H^s(\partial\Omega)^d} .
    $$
    Recall that
\begin{equation}\label{eq:dual_pairing_L2}
\langle \psi, \varphi \rangle_{H^{-s}(\partial\Omega)^d, H^s(\partial\Omega)^d}=\langle \varphi, \psi \rangle _{L^2(\partial\Omega)^d}
\end{equation}
    whenever $\psi\in L^2(\partial\Omega)^d\subset H^{-s}(\partial\Omega)^d$ and $\varphi\in H^{s}(\partial\Omega)^d\subset L^2(\partial\Omega)^d$; see for example \cite[Remark~3 in Section 5.2]{Brezis2011}. The reason why the functions $\varphi$ and $\psi$ do not appear in the same order in both sides of \eqref{eq:dual_pairing_L2} is that we defined 
$\langle \cdot, \cdot \rangle _{L^2(\partial\Omega)^d}$ to be linear with respect to the first entry.

\section{Convergence in the strong resolvent sense}
\label{s:strong_res_conv}

In this section we will prove \Cref{thm:res_conv_1} and \Cref{LimitFirstEigenvalue}.
To prove \Cref{thm:res_conv_1} we will show that 
$\Dirac_{+\infty}$ is the strong graph limit of $\Dirac_\tau$ as 
$\tau\uparrow+\infty$, and we will do it in a completely constructive way, that is, exhibiting the functions in 
$\mathrm{Dom}(\Dirac_\tau)$ which converge in the strong graph sense to a given function in $\mathrm{Dom}(\Dirac_{+\infty})$. Within the proof we will use the following density result from \cite[Proposition 2.12]{Vega2016}, whose proof is nonconstructive since it is based in orthogonality arguments. However, the reader can find a constructive proof of this density result in \cite[Corollary 2.28]{Duran2024}.

\begin{lemma}\label{l:density}
Let $v\in L^2(\Omega)^2$ be such that $\sigma\cdot\nabla v \in L^2(\Omega)^2$. Then, for every 
$\epsilon>0$ there exists $v_\epsilon\in H^1(\Omega)^2$ such that
\begin{equation}
\|v-v_\epsilon\|_{L^2(\Omega)^2}+
\|\sigma\cdot\nabla v- \sigma\cdot\nabla v_\epsilon\|_{L^2(\Omega)^2}<\epsilon.
\end{equation}
\end{lemma}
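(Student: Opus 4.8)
The statement is a Friedrichs-type density result: $H^1(\Omega)^2$ (indeed $C^\infty(\overline\Omega)^2$) is dense in the graph space $\{v\in L^2(\Omega)^2:\sigma\cdot\nabla v\in L^2(\Omega)^2\}$ for the graph norm of $\sigma\cdot\nabla$. The plan is to build $v_\epsilon$ explicitly by localization, inward translation, and mollification, exploiting that $\sigma\cdot\nabla=\sum_k\sigma_k\partial_k$ has \emph{constant} matrix coefficients, so it commutes both with translations and with convolution by a scalar mollifier. First I would fix a finite open cover $B_0,B_1,\dots,B_N$ of $\overline\Omega$, with $\overline{B_0}\subset\Omega$ and each $B_j$ ($j\ge1$) centered at a boundary point, small enough that (using the $C^2$ regularity of $\partial\Omega$) there is a direction $e_j$ and a radius $\delta_j>0$ with $x+t e_j\in\Omega$ for all $x$ in a neighborhood of $\overline\Omega\cap\overline{B_j'}$ and all $t\in(0,\delta_j)$, where $B_j'$ is a slightly shrunk copy of $B_j$; this is the segment/cone property. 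Picking a smooth partition of unity $\{\chi_j\}_{j=0}^N$ subordinate to $\{B_j\}$ with $\sum_j\chi_j\equiv1$ on $\overline\Omega$, and using $\sigma\cdot\nabla(\chi_j v)=\chi_j\,\sigma\cdot\nabla v+(\sigma\cdot\nabla\chi_j)v\in L^2(\Omega)^2$ (the commutator term is harmless since $\chi_j$ is smooth with compact support), it suffices to approximate each $\chi_j v$ separately in the graph norm.

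\textbf{Interior piece.} The function $w:=\chi_0 v$ is supported in a compact subset of $\Omega$. For a standard mollifier $\rho_\delta$ with $\delta$ below the distance from $\operatorname{supp}w$ to $\partial\Omega$, the convolution $w*\rho_\delta\in C_c^\infty(\Omega)^2$ satisfies $w*\rho_\delta\to w$ in $L^2(\Omega)^2$ and, since convolution commutes with the constant-coefficient operator, $\sigma\cdot\nabla(w*\rho_\delta)=(\sigma\cdot\nabla w)*\rho_\delta\to\sigma\cdot\nabla w$ in $L^2(\Omega)^2$ as $\delta\downarrow0$.

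\textbf{Boundary pieces.} Fix $j\ge1$ and set $w:=\chi_j v$. For $0<t<\delta_j$ define the inward translate $w_t(x):=w(x+te_j)$; by the choice of $B_j$ and $\delta_j$ this is a well-defined $L^2$ function on a fixed open set $U$ containing $\overline\Omega$ near $B_j'$, and $\sigma\cdot\nabla w_t=(\sigma\cdot\nabla w)(\cdot+te_j)$ has the same property. By continuity of translations in $L^2$, $w_t\to w$ and $\sigma\cdot\nabla w_t\to\sigma\cdot\nabla w$ in $L^2(\Omega)^2$ as $t\downarrow0$. For each fixed $t$, choosing $\delta$ small enough (depending on $t$) that the mollification stays inside $U$, the function $w_t*\rho_\delta$ is $C^\infty$ on $\overline\Omega$, with $\sigma\cdot\nabla(w_t*\rho_\delta)=(\sigma\cdot\nabla w_t)*\rho_\delta$; letting $\delta\downarrow0$ for fixed $t$ and then $t\downarrow0$, a diagonal choice $\delta=\delta(t)\downarrow0$ yields $w_t*\rho_{\delta(t)}\to w$ and $\sigma\cdot\nabla(w_t*\rho_{\delta(t)})\to\sigma\cdot\nabla w$ in $L^2(\Omega)^2$. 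Summing the resulting approximants over $j=0,\dots,N$ produces, for every $\epsilon>0$, a function $v_\epsilon\in C^\infty(\overline\Omega)^2\subset H^1(\Omega)^2$ with $\|v-v_\epsilon\|_{L^2(\Omega)^2}+\|\sigma\cdot\nabla v-\sigma\cdot\nabla v_\epsilon\|_{L^2(\Omega)^2}<\epsilon$, which is the claim.

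\textbf{Main obstacle.} The delicate point is the boundary step: one must use the $C^2$ (or merely Lipschitz/segment-property) regularity of $\partial\Omega$ to guarantee that one fixed translation direction $e_j$ pushes the whole local piece $\overline\Omega\cap B_j'$ strictly into $\Omega$ in a uniform way, so that $w_t$ is genuinely defined on an open set containing $\overline\Omega$ locally and mollification is legitimate there; and one must book-keep the two parameters so that the translation error (small as $t\downarrow0$) and, for fixed $t$, the mollification error (small as $\delta\downarrow0$) are both controlled, via the diagonal argument. Everything else is routine, the structural reason being that $\sigma\cdot\nabla$ has constant coefficients, hence no commutator terms beyond the harmless $(\sigma\cdot\nabla\chi_j)v$ arise. (An alternative, nonconstructive route, as in \cite{Vega2016}, is to show that the $L^2$-orthogonal complement of $H^1(\Omega)^2$ in the graph space is trivial; the constructive argument above is the one I would use.)
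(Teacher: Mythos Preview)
Your argument is correct: this is the standard Friedrichs/Meyers--Serrin density scheme, adapted from $W^{k,p}$ to the graph space of $\sigma\cdot\nabla$, and it goes through precisely because $\sigma\cdot\nabla$ has constant matrix coefficients (so it commutes with translations and scalar mollifiers) and because a $C^2$ boundary enjoys the segment property. The bookkeeping you flag---zero-extension of $\chi_j v$, inward translate, then mollify with $\delta=\delta(t)$---is exactly the delicate point, and you handle it correctly.

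As for comparison: the paper does \emph{not} prove this lemma. It quotes it from \cite[Proposition~2.12]{Vega2016}, whose proof is nonconstructive (one shows that any element of the graph space orthogonal to $H^1(\Omega)^2$ in the graph inner product must vanish, using the boundary-value structure of $\sigma\cdot\nabla$), and points to \cite[Corollary~2.28]{Duran2024} for a constructive proof. Your route is therefore genuinely different from the primary reference and presumably close in spirit to the second. The trade-off is the expected one: the orthogonality argument is shorter but relies on understanding the adjoint/trace structure of $\sigma\cdot\nabla$ on $\Omega$; your argument is longer and more hands-on but needs only the segment property and basic real analysis, and in fact yields the stronger conclusion $v_\epsilon\in C^\infty(\overline\Omega)^2$.
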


With this ingredient in hand, we can now address the proof of strong resolvent convergence, as $\tau \uparrow +\infty$, of the operators $\Dirac_\tau$ to $\Dirac_{+\infty}$ via showing that $\Dirac_{+\infty}$ is the strong graph limit of $\Dirac_\tau$.

\begin{proof}[Proof of \Cref{thm:res_conv_1}]

We will prove that, given $\varphi \in \operatorname{Dom}(\Dirac_{+\infty})$, for every $\epsilon>0$ there exists 
$\tau_0\in\R$ such that for every $\tau>\tau_0$ there exists $\varphi_\tau \in \operatorname{Dom}(\Dirac_\tau)$ such that 
\begin{equation}\label{claim:res_conv}
\|\varphi-\varphi_\tau\|_{L^2(\Omega)^4}
+\|(-i\alpha\cdot\nabla+m\beta)(\varphi-\varphi_\tau)\|_{L^2(\Omega)^4}<\epsilon.
\end{equation}
Once this is proven, it is straightforward to check that $\Dirac_{+\infty}$ is the strong graph limit of $\Dirac_\tau$ as $\tau\uparrow+\infty$, and then the convergence of $\Dirac_\tau$ to $\Dirac_{+\infty}$ in the strong resolvent sense as $\tau\uparrow+\infty$ follows by \cite[Theorem VIII.26]{ReedSimon1980}.

Let $\varphi= (u, v)^\intercal \in 
\operatorname{Dom}(\Dirac_{+\infty})$ and $\delta>0$ small enough to be chosen later on. By \Cref{l:density}, there exists 
$v_\delta\in H^1(\Omega)^2$ such that
\begin{equation}\label{eq:density_delta}
\|v-v_\delta\|_{L^2(\Omega)^2}+
\|\sigma\cdot\nabla (v-v_\delta)\|_{L^2(\Omega)^2}<\delta.
\end{equation} 
Since $v_\delta\in H^1(\Omega)^2$, by the trace theorem we have $v_\delta\in H^{1/2}(\partial\Omega)^2$. Next, for every $\tau\in\R$ we set 
$v_{\delta,\tau} := -ie^{-\tau}(\sigma\cdot\nu)v_\delta$  on $\partial \Omega$.
Then, using that $\nu$ is of class $C^1$ on $\partial\Omega$ and  \cite[Lemma A.2]{Behrndt2020}, we deduce that 
\begin{equation}
\|v_{\delta,\tau}\|_{H^{1/2}(\partial\Omega)^2}\leq Ce^{-\tau}
\|v_\delta\|_{H^{1/2}(\partial\Omega)^2}
\end{equation} 
for some $C>0$ only depending on $\Omega$. Let  
$E: H^{1/2}(\partial \Omega)^2 \rightarrow H^1(\Omega)^2$ be a bounded linear extension operator, and set
$u_{\delta,\tau} := E(v_{\delta,\tau}) \in H^1(\Omega)^2$. 
Then, 
\begin{equation}\label{eq:bdy_cond_tau}
\|u_{\delta,\tau}\|_{H^1(\Omega)^2}\leq Ce^{-\tau}\|v_\delta\|_{H^{1/2}(\partial\Omega)^2}
\end{equation}
for some $C>0$ only depending on $\Omega$.

Finally, set $\varphi_{\delta,\tau} 
:=(u+u_{\delta,\tau},v_\delta)^\intercal$.
We know that $u_{\delta,\tau}$ and $v_\delta$ belong to $H^1(\Omega)^2$, and recall that $u\in H^1_0(\Omega)^2$ since $\varphi= (u, v)^\intercal \in \operatorname{Dom}(\Dirac_{+\infty})$. Thus, $\varphi_{\delta,\tau}\in H^1(\Omega)^4$ and, on $\partial\Omega$, we have
\begin{equation}
u+u_{\delta,\tau}=u_{\delta,\tau}
=v_{\delta,\tau}=-ie^{-\tau}(\sigma\cdot\nu)v_\delta.
\end{equation}
This last equation means that $\varphi_{\delta,\tau} = i (\sinh\tau- \cosh\tau\,  \beta)( \alpha \cdot\nu ) \varphi_{\delta,\tau}$ on $\partial\Omega$, and this leads to 
$\varphi_{\delta,\tau}\in \operatorname{Dom}(\Dirac_\tau)$. Moreover,
\begin{equation}\label{eq:key_ineq_res_conv}
\begin{split}
\|\varphi-\varphi_{\delta,\tau}\|_{L^2(\Omega)^4}
&+\|(-i\alpha\cdot\nabla+m\beta)(\varphi-\varphi_{\delta,\tau})\|_{L^2(\Omega)^4}\\
&\leq
(1+m)\|\varphi-\varphi_{\delta,\tau}\|_{L^2(\Omega)^4}
+\|\alpha\cdot\nabla(\varphi-\varphi_{\delta,\tau})\|_{L^2(\Omega)^4}\\
&=(1+m)\big(\|u_{\delta,\tau}\|^2_{L^2(\Omega)^2}
+\|v-v_{\delta}\|^2_{L^2(\Omega)^2}\big)^{1/2}\\
&\quad+\big(\|\sigma\cdot\nabla u_{\delta,\tau}\|^2_{L^2(\Omega)^2}
+\|\sigma\cdot\nabla 
(v-v_{\delta})\|^2_{L^2(\Omega)^2}\big)^{1/2}\\
&\leq\big((1+m)^2Ce^{-2\tau}\|v_\delta\|^2_{H^{1/2}(\partial\Omega)^2}
+(1+m)^2\|v-v_{\delta}\|^2_{L^2(\Omega)^2}\big)^{1/2}\\
&\quad+\big(Ce^{-2\tau}\|v_\delta\|^2_{H^{1/2}(\partial\Omega)^2}
+\|\sigma\cdot\nabla (v-v_{\delta})\|^2_{L^2(\Omega)^2}\big)^{1/2}
\end{split}
\end{equation}
for some $C>0$ depending only on $\Omega$, where we used \eqref{eq:bdy_cond_tau} in the last inequality above. Therefore, given $\epsilon>0$, using \eqref{eq:density_delta} we first take $\delta>0$ small enough such that 
\begin{equation}
(1+m)\|v-v_{\delta}\|_{L^2(\Omega)^2}<\frac{\epsilon}{4}
\quad\text{and}\quad
\|\sigma\cdot\nabla (v-v_{\delta})\|_{L^2(\Omega)^2}
<\frac{\epsilon}{4},
\end{equation}
and then, once $\delta$ is chosen, we take $\tau_0\in\R$ big enough such that 
\begin{equation}
(1+m)C^{1/2}e^{-\tau}\|v_\delta\|_{H^{1/2}(\partial\Omega)^2}<\frac{\epsilon}{4}\quad\text{for all $\tau>\tau_0$.}
\end{equation}
Plugging these estimates in \eqref{eq:key_ineq_res_conv} we conclude that
\eqref{claim:res_conv} holds taking 
$\varphi_\tau:=\varphi_{\delta,\tau}$.
\end{proof}

As a consequence of the strong resolvent convergence of $\Dirac_\tau$ to $\Dirac_{+\infty}$ as $\tau \uparrow +\infty$, we show now the relation of the limit, as $\tau \uparrow +\infty$, of the first positive eigenvalue of $\Dirac_\tau$ with the first eigenvalue of the Dirichlet Laplacian.

\begin{proof}[Proof of \Cref{LimitFirstEigenvalue}]
    Since $\Dirac_\tau$ converges to $\Dirac_{+\infty}$ in the strong resolvent sense as $\tau\uparrow+\infty$ by \Cref{thm:res_conv_1}, using 
    \cite[Theorem VIII.24 $(a)$]{ReedSimon1980} and \eqref{SpectraPMIn}, there exist $\lambda_\tau \in \sigma(\Dirac_\tau)$ such that
    \begin{equation}
       \sqrt{\min \sigma(-\Delta_D)+m^2} = \lim_{\tau \uparrow +\infty} \lambda_\tau.
    \end{equation}
In particular, $\lambda_\tau>m$ for all $\tau$ big enough, which yields $\lambda_1^+(\tau)\leq\lambda_\tau$ for all 
$\tau$ big enough by the definition of $\lambda_1^+$. Therefore, using \cite[Theorem 1.5]{Mas2022} we deduce that
     \begin{equation}
        \sqrt{\min \sigma(-\Delta_D)+m^2}\leq\lim_{\tau \uparrow +\infty} \lambda_1^+(\tau) \leq \lim_{\tau \uparrow +\infty} \lambda_\tau = \sqrt{\min \sigma(-\Delta_D)+m^2},
     \end{equation}
     and the corollary follows.
\end{proof}

\section{Convergence of the projected resolvents in operator norm}
\label{s:norm_res_conv}

This section is devoted to the proof of \Cref{thm:res_conv_2}. We begin with the following key lemma, which unveils why convergence of the resolvents in operator norm as $\tau\uparrow+\infty$ can be achieved when we restrict to $\ker(\Dirac_{+\infty}+ m)^{\perp}$. Indeed, recall that $\mathrm{Dom}(\Dirac_{+\infty})$ is not contained in $H^1(\Omega)^4$ due to the eigenvalue $-m$ of infinite multiplicity. However, as the following result shows, the inclusion of $\mathrm{Dom}(\Dirac_{+\infty})\cap\ker(\Dirac_{+\infty}+ m)^{\perp}$ in $H^1(\Omega)^4$ is well defined and continuous with respect to the graph norm of $\Dirac_{+\infty}$.

\begin{lemma}\label{Lemma:AlphaGradEstimates_ort}
Let $\varphi \in\operatorname{Dom}(\Dirac_{+\infty})$ and assume that 
$\langle \varphi, \psi\rangle_{L^2(\Omega)^4}=0$ for all 
$\psi \in \operatorname{Dom}(\Dirac_{+\infty})$ such that 
$\Dirac_{+\infty} \psi=-m\psi$.
Then, $\varphi$ belongs to $H^{1}(\Omega)^4$ and the estimate
\begin{equation}\label{ineq:cont_graph_h1} 
\| \varphi \|_{H^{1}(\Omega)^4} \leq C (   \| \varphi \|_{L^2(\Omega)^4}    + \| \alpha \cdot \nabla \varphi \|_{L^2(\Omega)^4} )
\end{equation}
holds for some $C>0$ depending only on $\Omega$.
\end{lemma}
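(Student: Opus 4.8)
The plan is to decompose $\varphi = (u,v)^\intercal \in \operatorname{Dom}(\Dirac_{+\infty})$ using the structure of the differential expression and the zigzag boundary condition. Since $u \in H^1_0(\Omega)^2$ by the definition of $\operatorname{Dom}(\Dirac_{+\infty})$, the only component whose $H^1$-regularity is in question is $v$. Writing $\alpha \cdot \nabla \varphi = (\sigma\cdot\nabla v,\, \sigma\cdot\nabla u)^\intercal$, the hypothesis $\alpha\cdot\nabla\varphi \in L^2(\Omega)^4$ already gives $\sigma\cdot\nabla v \in L^2(\Omega)^2$, so what must be upgraded is control of the full gradient $\nabla v$ from control of $\sigma\cdot\nabla v$. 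The key point is that $\sigma\cdot\nabla$ is (up to constants) a Dirac-type operator on $\C^2$, and $(\sigma\cdot\nabla)(\sigma\cdot\nabla) = \Delta$; so the regularity theory for $v$ is governed by a Laplacian, but a boundary condition is needed to close the elliptic estimate. The orthogonality hypothesis $\varphi \perp \ker(\Dirac_{+\infty}+m)$ is precisely what will supply the missing boundary information: I expect that $\ker(\Dirac_{+\infty}+m)$ consists of functions of the form $(0,w)^\intercal$ with $\sigma\cdot\nabla w = 0$ (the "harmonic" sector annihilated by $\sigma\cdot\nabla$), and that $\varphi\perp\ker(\Dirac_{+\infty}+m)$ forces $v$ to lie in the orthogonal complement of $\ker(\sigma\cdot\nabla)$ in $L^2(\Omega)^2$.

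First I would identify $\ker(\Dirac_{+\infty}+m)$ explicitly: if $(\Dirac_{+\infty}+m)\psi = 0$ with $\psi = (a,b)^\intercal$, then from the upper/lower block structure of $-i\alpha\cdot\nabla + m\beta$ one gets $-i\sigma\cdot\nabla b + 2m a = 0$ and $-i\sigma\cdot\nabla a = 0$; combined with $a\in H^1_0(\Omega)^2$ this should force $a = 0$ (since $a$ harmonic in the $\sigma\cdot\nabla$ sense and vanishing on the boundary, hence $\Delta a = 0$ with zero Dirichlet data, so $a\equiv0$), and then $\sigma\cdot\nabla b = 0$. Thus $\ker(\Dirac_{+\infty}+m) = \{(0,b)^\intercal : \sigma\cdot\nabla b = 0\}$, and the orthogonality hypothesis becomes: $u$ arbitrary in $H^1_0(\Omega)^2$ and $v \perp \ker(\sigma\cdot\nabla)$ in $L^2(\Omega)^2$. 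Next I would solve the auxiliary boundary value problem: given $g := \sigma\cdot\nabla v \in L^2(\Omega)^2$, consider $\tilde v := (\sigma\cdot\nabla) G g$ where $G$ is a suitable solution operator for the (vector) Dirichlet or Neumann Laplacian, chosen so that $\tilde v \in H^1(\Omega)^2$, $\sigma\cdot\nabla \tilde v = g$, and $\tilde v \perp \ker(\sigma\cdot\nabla)$; then $v - \tilde v \in \ker(\sigma\cdot\nabla)$ and is orthogonal to $\ker(\sigma\cdot\nabla)$, hence $v = \tilde v \in H^1(\Omega)^2$, with the elliptic estimate $\|v\|_{H^1(\Omega)^2} \lesssim \|g\|_{L^2(\Omega)^2} + \|v\|_{L^2(\Omega)^2}$. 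Combining this with $\|u\|_{H^1(\Omega)^2} \lesssim \|\sigma\cdot\nabla u\|_{L^2(\Omega)^2} + \|u\|_{L^2(\Omega)^2}$ (valid for $u\in H^1_0(\Omega)^2$, e.g. by Poincaré and $(\sigma\cdot\nabla)^2 = \Delta$ together with integration by parts on $H^1_0$), and re-expressing everything through $\|\varphi\|_{L^2}$ and $\|\alpha\cdot\nabla\varphi\|_{L^2}$, yields \eqref{ineq:cont_graph_h1}.

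The main obstacle I anticipate is making the auxiliary solution operator $G$ and the decomposition $v = \tilde v + (v-\tilde v)$ rigorous and self-consistent — in particular, choosing the right boundary condition for $G$ so that $\tilde v$ genuinely lands in $H^1$ with the correct orthogonality, and verifying that $\ker(\sigma\cdot\nabla)$ (as an $L^2$-subspace, defined via the distributional equation $\sigma\cdot\nabla b = 0$) is exactly the $L^2$-closure one expects and that the Hodge-type splitting $L^2(\Omega)^2 = \ker(\sigma\cdot\nabla) \oplus \overline{\operatorname{ran}(\sigma\cdot\nabla)|_{H^1_0}}$ holds. One clean route is: decompose $v = v_0 + (\sigma\cdot\nabla)w$ with $v_0 \in \ker(\sigma\cdot\nabla)$ and $w \in H^1_0(\Omega)^2$ solving $\Delta w = \sigma\cdot\nabla v$ weakly (solvable since the right-hand side is the divergence-type term $\sigma\cdot\nabla v \in L^2$, well-posed in $H^1_0$); then $(\sigma\cdot\nabla)w \in H^1$ by interior-plus-boundary elliptic regularity for the Dirichlet Laplacian on the $C^2$ domain $\Omega$ (here $w \in H^2$ locally and the trace structure is controlled), $(\sigma\cdot\nabla)w \perp \ker(\sigma\cdot\nabla)$, and the hypothesis $v\perp\ker(\sigma\cdot\nabla)$ forces $v_0 = 0$. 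The quantitative bound then follows from the $H^2$-estimate $\|w\|_{H^2(\Omega)^2} \lesssim \|\sigma\cdot\nabla v\|_{L^2(\Omega)^2} + \|w\|_{L^2(\Omega)^2}$ together with a Poincaré inequality for $w\in H^1_0$ to absorb $\|w\|_{L^2}$ into $\|\nabla w\|_{L^2} \lesssim \|\sigma\cdot\nabla v\|_{L^2}$. I would carry out exactly these steps, deferring the verification of the $H^2$-regularity of $w$ to the standard elliptic theory for second-order operators on $C^2$ domains.
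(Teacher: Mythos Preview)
Your proposal is correct and follows the same overall architecture as the paper: reduce to the lower component $v$ via the inclusion $\{(0,b)^\intercal:\sigma\cdot\nabla b=0\}\subset\ker(\Dirac_{+\infty}+m)$, so that the hypothesis becomes $v\perp\ker(\sigma\cdot\nabla)$ in $L^2(\Omega)^2$; handle $u\in H^1_0(\Omega)^2$ by the identity $\|\nabla u\|_{L^2}=\|\sigma\cdot\nabla u\|_{L^2}$ on $H^1_0$; and upgrade $v$ to $H^1$ by producing a potential in $H^1_0$ and invoking global $H^2$-regularity for the Dirichlet Laplacian on the $C^2$ domain.

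The genuine difference lies in how the potential is produced. The paper sets $v_K(x)=\int_\Omega K(x-y)v(y)\,dy$ with $K=-i\sigma\cdot\nabla\Gamma$ (Newtonian potential), so that $-i\sigma\cdot\nabla v_K=v$ automatically, and then uses the orthogonality $v\perp\ker(\sigma\cdot\nabla)$ \emph{to show that the trace of $v_K$ on $\partial\Omega$ vanishes}, by testing against single-layer potentials $w_g(x)=\int_{\partial\Omega}K(x-y)g(y)\,d\upsigma(y)\in\ker(\sigma\cdot\nabla)$. You instead solve $\Delta w=\sigma\cdot\nabla v$ directly in $H^1_0(\Omega)^2$, so that $w\in H^1_0$ is built in, and then use the orthogonality \emph{to show that the residual $v_0=v-(\sigma\cdot\nabla)w\in\ker(\sigma\cdot\nabla)$ vanishes}, via the Hodge-type fact $(\sigma\cdot\nabla)H^1_0(\Omega)^2\perp\ker(\sigma\cdot\nabla)$. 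The two constructions yield the same object (indeed $v_K=iw$ once both are known to lie in $H^1_0$), but your route avoids the layer-potential machinery and the careful justification of the trace identity for $v_K$ that occupies a good part of the paper's argument; the price is the (easy) verification that $\langle(\sigma\cdot\nabla)w,b\rangle_{L^2}=0$ for $w\in H^1_0$ and $b\in\ker(\sigma\cdot\nabla)$, which follows by approximating $w$ in $H^1$ by $C_c^\infty$ functions. Both approaches close with the same $H^2$-estimate for the Dirichlet problem.
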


This lemma will follow from the following result applied to the lower component of $\varphi$.

\begin{lemma}\label{Lemma:SigmaGradEstimates_ort}
Let $v \in L^2(\Omega)^2$ be such that $ \sigma \cdot \nabla v \in L^2(\Omega)^2$. Assume that 
$\langle v, w\rangle_{L^2(\Omega)^2}=0$ for all 
$w\in L^2(\Omega)^2$ such that $ \sigma \cdot \nabla w=0$ in  $\Omega$.
Then, $v$ belongs to $H^{1}(\Omega)^2$ and satisfies 
$$\|v \|_{H^{1}(\Omega)^2} \leq C (   \| v \|_{L^2(\Omega)^2}    + \| \sigma \cdot \nabla v \|_{L^2(\Omega)^2} )$$
for some $C>0$ depending only on $\Omega$.
\end{lemma}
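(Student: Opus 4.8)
\textbf{Proof strategy for \Cref{Lemma:SigmaGradEstimates_ort}.} The plan is to turn the first-order condition $\sigma\cdot\nabla v\in L^2(\Omega)^2$ into a second-order elliptic estimate, and to use the orthogonality hypothesis to handle the boundary. First I would recall the algebraic identity $(\sigma\cdot\nabla)^2=\Delta I_2$, which follows from $\sigma_j\sigma_k+\sigma_k\sigma_j=2\delta_{jk}I_2$. Setting $g:=\sigma\cdot\nabla v\in L^2(\Omega)^2$, we have, in the distributional sense, $\Delta v=\sigma\cdot\nabla g$, so $v$ solves an elliptic equation with right-hand side in $H^{-1}(\Omega)^2$. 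By itself this only gives $v\in H^1_{\mathrm{loc}}$; to get up-to-the-boundary $H^1$ regularity one needs a boundary condition, and this is exactly where the hypothesis $v\perp\ker(\sigma\cdot\nabla)$ must enter. The kernel $\{w\in L^2(\Omega)^2:\sigma\cdot\nabla w=0\}$ is an infinite-dimensional space (it contains, e.g., $w$ with $w_1$ holomorphic and $w_2=0$ in suitable complex coordinates on slices, reflecting precisely the infinite-multiplicity eigenvalue behind the loss of regularity), and the point of the orthogonality is to kill the ``bad'' part of $v$ that would otherwise prevent an $H^1$ bound.

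\textbf{Key steps.} I would proceed as follows. (1) Observe that the operator $\sigma\cdot\nabla$ with domain $\{v\in L^2:\sigma\cdot\nabla v\in L^2\}$ has an adjoint, and $\ker(\sigma\cdot\nabla)^\perp = \overline{\operatorname{ran}(\sigma\cdot\nabla\,\text{on smooth compactly supported functions})}$ — more usefully, $\ker(\sigma\cdot\nabla)^\perp=\overline{(\sigma\cdot\nabla)\,H^1_0(\Omega)^2}$. So the hypothesis lets me write $v$ as an $L^2$-limit of $\sigma\cdot\nabla\phi_n$ with $\phi_n\in H^1_0(\Omega)^2$; equivalently, there is a sequence reducing the problem to functions with good boundary behavior. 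Alternatively, and more robustly, I would set up a variational problem: let $\phi\in H^1_0(\Omega)^2$ be the weak solution of the boundary value problem determined by pairing against $(\sigma\cdot\nabla)H^1_0$, and show $v=\sigma\cdot\nabla\phi$ using the orthogonality to identify the component of $v$ in $\ker(\sigma\cdot\nabla)^\perp$ with $\sigma\cdot\nabla\phi$. (2) Given $v=\sigma\cdot\nabla\phi$ with $\phi\in H^1_0(\Omega)^2$ and $\sigma\cdot\nabla v=\Delta\phi\in L^2(\Omega)^2$, apply standard elliptic $H^2$ regularity for the Dirichlet Laplacian on the $C^2$ domain $\Omega$ (this is where $C^2$ regularity of $\partial\Omega$ is used) to get $\phi\in H^2(\Omega)^2$ with $\|\phi\|_{H^2}\le C(\|\phi\|_{L^2}+\|\Delta\phi\|_{L^2})$. (3) Then $v=\sigma\cdot\nabla\phi\in H^1(\Omega)^2$ with $\|v\|_{H^1}\le C\|\phi\|_{H^2}\le C(\|\phi\|_{H^1}+\|\Delta\phi\|_{L^2})$. (4) Finally convert the right-hand side back to $v$: $\|\Delta\phi\|_{L^2}=\|\sigma\cdot\nabla v\|_{L^2}$, and $\|\phi\|_{H^1}\le C\|\nabla\phi\|_{L^2}\le C'\|\sigma\cdot\nabla\phi\|_{L^2}=C'\|v\|_{L^2}$ by the Poincaré inequality together with $|\sigma\cdot\nabla\phi|^2$ controlling $|\nabla\phi|^2$ in $L^2$ (again via $(\sigma\cdot\nabla)^2=\Delta$ and integration by parts, $\|\nabla\phi\|_{L^2}=\|\sigma\cdot\nabla\phi\|_{L^2}$ for $\phi\in H^1_0$). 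Combining (3) and (4) yields the claimed estimate.

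\textbf{Main obstacle.} The delicate point is step (1): rigorously identifying $\ker(\sigma\cdot\nabla)^\perp$ with $(\sigma\cdot\nabla)H^1_0(\Omega)^2$ (or at least showing that a $v$ orthogonal to the kernel can be represented as $\sigma\cdot\nabla\phi$ for some $\phi\in H^1_0(\Omega)^2$ with the right quantitative control). One must be careful that $\sigma\cdot\nabla$ on $\{v\in L^2:\sigma\cdot\nabla v\in L^2\}$ is a closed operator, compute its range and kernel, and verify the closed-range/orthogonal-decomposition statement; this requires either a functional-analytic argument (closed range theorem plus a coercivity estimate for the associated bilinear form $\int\sigma\cdot\nabla\phi\cdot\overline{\sigma\cdot\nabla\psi}$ on $H^1_0$, which is just $\int\nabla\phi:\overline{\nabla\psi}$ and hence coercive by Poincaré) or the constructive density argument underlying \Cref{l:density}. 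Once this representation is secured, the remaining steps are routine elliptic regularity. I would also need to double-check that the constant depends only on $\Omega$, which is clear since all constants entering — the Poincaré constant, the $H^2$-elliptic-regularity constant for $-\Delta_D$ — depend only on $\Omega$.

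\textbf{Deduction of \Cref{Lemma:AlphaGradEstimates_ort}.} For the vector lemma, write $\varphi=(u,v)^\intercal\in\operatorname{Dom}(\Dirac_{+\infty})$, so $u\in H^1_0(\Omega)^2$ already and $\alpha\cdot\nabla\varphi\in L^2(\Omega)^4$ gives $\sigma\cdot\nabla v\in L^2(\Omega)^2$ (and $\sigma\cdot\nabla u\in L^2(\Omega)^2$, automatic from $u\in H^1$). The eigenspace $\ker(\Dirac_{+\infty}+m)$ consists exactly of $\psi=(0,w)^\intercal$ with $w\in L^2(\Omega)^2$, $\sigma\cdot\nabla w=0$ (since $(-i\alpha\cdot\nabla+m\beta)\psi=-m\psi$ forces the upper component's equation $-i\sigma\cdot\nabla w=0$ and $(m+m)$ on the lower, i.e. $mu\cdot$... one checks the upper block gives $\sigma\cdot\nabla w=0$ and $u=0$). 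Hence $\varphi\perp\ker(\Dirac_{+\infty}+m)$ translates, after noting the upper components are irrelevant, into $\langle v,w\rangle_{L^2(\Omega)^2}=0$ for all $w$ with $\sigma\cdot\nabla w=0$. Applying \Cref{Lemma:SigmaGradEstimates_ort} gives $v\in H^1(\Omega)^2$ with $\|v\|_{H^1}\le C(\|v\|_{L^2}+\|\sigma\cdot\nabla v\|_{L^2})$; combining with $\|u\|_{H^1}\le C(\|u\|_{L^2}+\|\sigma\cdot\nabla u\|_{L^2})$ (which holds for $u\in H^1_0$ by the same $(\sigma\cdot\nabla)^2=\Delta$ identity and interior/boundary elliptic estimates, or simply $\|\nabla u\|_{L^2}=\|\sigma\cdot\nabla u\|_{L^2}$) yields $\varphi\in H^1(\Omega)^4$ and \eqref{ineq:cont_graph_h1}, since $\|\alpha\cdot\nabla\varphi\|_{L^2}^2=\|\sigma\cdot\nabla u\|_{L^2}^2+\|\sigma\cdot\nabla v\|_{L^2}^2$.
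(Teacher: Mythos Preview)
Your proof is correct, and its overall architecture---produce a potential $\phi\in H^1_0(\Omega)^2$ with $\sigma\cdot\nabla\phi=v$, upgrade $\phi$ to $H^2$ via elliptic regularity for $-\Delta_D$ on the $C^2$ domain, then read off $v\in H^1$---coincides with the paper's. The genuine difference is in how the potential is obtained and how the orthogonality hypothesis enters. The paper constructs the potential explicitly as $v_K(x)=\int_\Omega K(x-y)v(y)\,dy$ with $K=-i\sigma\cdot\nabla\Gamma$ and $\Gamma$ the Newtonian kernel, and then shows the trace of $v_K$ on $\partial\Omega$ vanishes by testing $v$ against single-layer potentials $w_g(x)=\int_{\partial\Omega}K(x-y)g(y)\,d\upsigma(y)$, which lie in $\ker(\sigma\cdot\nabla)$; the orthogonality then gives $\langle v_K,g\rangle_{L^2(\partial\Omega)}=0$ for all $g$, hence $v_K\in H^1_0$. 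Your route bypasses the potential-theoretic work (mapping properties of the Newtonian potential, the $L^2$ bound on $w_g$, the Fubini and trace justifications) by solving directly in $H^1_0$ via Lax--Milgram for the coercive form $a(\phi,\psi)=\langle\nabla\phi,\nabla\psi\rangle=\langle\sigma\cdot\nabla\phi,\sigma\cdot\nabla\psi\rangle$, and then using the orthogonality to conclude $\sigma\cdot\nabla\phi-v\in\ker(\sigma\cdot\nabla)\cap\ker(\sigma\cdot\nabla)^\perp=\{0\}$. This is more elementary and self-contained; the paper's version is more constructive and makes explicit which kernel elements (the single layers) detect the boundary trace. Your deduction of \Cref{Lemma:AlphaGradEstimates_ort} matches the paper's.
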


Assuming for the moment \Cref{Lemma:SigmaGradEstimates_ort}, let us prove \Cref{Lemma:AlphaGradEstimates_ort}.

\begin{proof}[Proof of \Cref{Lemma:AlphaGradEstimates_ort}]
Writing $\varphi\in\operatorname{Dom}(\Dirac_{+\infty})$ in upper and lower components $\varphi=(u,v)^\intercal$, we see that both $v$ and $\sigma\cdot\nabla v$ belong to $L^2(\Omega)^2$, and that $u\in H_0^1(\Omega)^2$. Since the trace of $u$ on $\partial\Omega$ vanishes, by \cite[Lemma 2.1]{Mas2022} we have 
\begin{equation}\label{l:est_up_comp}
\|u \|_{H^{1}(\Omega)^2} \leq C (   \| u \|_{L^2(\Omega)^2}    + \| \sigma \cdot \nabla u \|_{L^2(\Omega)^2} )
\end{equation}
for some $C>0$ depending only on $\Omega$.

Next, given $w\in L^2(\Omega)^2$ such that $ \sigma \cdot \nabla w=0$, set $\psi_w:=(0,w)^\intercal\in\operatorname{Dom}(\Dirac_{+\infty})$. Note that
\begin{equation}
\Dirac_{+\infty}\psi_w=
\begin{pmatrix}
m & -i\sigma\cdot\nabla\\
-i\sigma\cdot\nabla & -m
\end{pmatrix}
\begin{pmatrix}
0\\
w
\end{pmatrix}
=\begin{pmatrix}
-i\sigma\cdot\nabla w\\
-mw
\end{pmatrix}
=\begin{pmatrix}
0\\
-mw
\end{pmatrix}
=-m\psi_w.
\end{equation}
Therefore, by the assumption on $\varphi$, we have
$0=\langle \varphi, \psi_w\rangle_{L^2(\Omega)^4}
=\langle v, w\rangle_{L^2(\Omega)^2}$
for all $w\in L^2(\Omega)^2$ such that $ \sigma \cdot \nabla w=0$.  Then, the lemma follows from \Cref{Lemma:SigmaGradEstimates_ort} applied to $v$ and \eqref{l:est_up_comp}. 
\end{proof}

Let us now address the proof of \Cref{Lemma:SigmaGradEstimates_ort}.

\begin{proof}[Proof of \Cref{Lemma:SigmaGradEstimates_ort}]
Set 
\begin{equation}
\Gamma(x):=\frac{1}{4\pi|x|}
\quad\text{and}\quad
K(x):=i\sigma\cdot\frac{x}{4\pi|x|^3}\quad\text{for all $x\in\R^3\setminus\{0\}$.}
\end{equation}
Observe that $K=-i\sigma\cdot\nabla \Gamma$ and that
$-\Delta\Gamma=\delta_0$ in the sense of distributions, where $\delta_0$ denotes the Dirac delta measure in $\R^3$ centered at the origin of coordinates. This, together with the fact that 
$(-i\sigma\cdot\nabla)^2=-\Delta$, leads to
$-i\sigma\cdot\nabla K=\delta_0$ 
in the sense of distributions. 

Let $v \in L^2(\Omega)^2$, and define 
\begin{equation} \label{def:Newt_pot}
v_\Gamma(x):=\int_\Omega \Gamma(x-y)v(y)\,dy
\quad\text{and}\quad
v_K(x):=\int_\Omega K(x-y)v(y)\,dy
\quad\text{for a.e. $x\in\Omega$}.
\end{equation}
Since $v_\Gamma$ is the Newtonian potential of $v \in L^2(\Omega)^2$, by \cite[Lemma 7.12 and Theorem 9.9]{Gilbarg-Trudinger} we get that $v_\Gamma\in H^2(\Omega)^2$ with $\|v_\Gamma\|_{L^2(\Omega)^2} \leq C \|v\|_{L^2(\Omega)^2}$ and $\|D^2 v_\Gamma\|_{L^2(\Omega)^2} \leq C \|v\|_{L^2(\Omega)^2}$ for some  $C>0$ depending only on $\Omega$. Using now the Gagliardo-Nirenberg interpolation inequality in bounded domains \cite[Theorem 1]{Nirenberg1966}, we indeed have that $\|v_\Gamma\|_{H^2(\Omega)^2} \leq C \|v\|_{L^2(\Omega)^2}$ for some $C>0$ depending only on 
$\Omega$. In particular, since $K=-i\sigma\cdot\nabla \Gamma$, it holds that  $v_K=-i\sigma\cdot\nabla v_\Gamma$ and, thus, $v_K\in H^1(\Omega)^2$ with 
\begin{equation}\label{eq:vk_H1_v_L2}
\|v_K\|_{H^1(\Omega)^2} \leq C \|v\|_{L^2(\Omega)^2}
\end{equation} 
for some $C>0$ depending only on $\Omega$ ---this estimate also follows from \cite[Proposition 2.17]{Vega2016}. 

Given $g\in C^\infty(\partial\Omega)^2$ set
\begin{equation}
w_g(x):=\int_{\partial\Omega} K(x-y)g(y)\,d\upsigma(y)
\quad\text{for all $x\in\Omega$}.
\end{equation}
It is well known that $w_g\in L^2(\Omega)^2$. For the sake of completeness, we give here a simple proof of this assertion following the ideas from \cite[Lemma 2.1]{AMV2014}. By the Cauchy-Schwarz inequality,
\begin{equation}
|w_g(x)|^2\leq\int_{\partial\Omega} |K(x-y)|^{3/4}\,d\upsigma(y)\int_{\partial\Omega} |K(x-z)|^{5/4}|g(z)|^{2}\,d\upsigma(z)
\quad\text{for all $x\in\Omega$}.
\end{equation}
Note that 
$|K(x)|^{3/4}\leq C|x|^{-3/2}$ for all $x\in\R^3\setminus\{0\}$ ---by $|K(x)|$ we mean the Frobenius norm of the matrix $K(x)$, that is, the square root of the sum of the squares of its entries. Using this, that $-3/2>-2$, and that $\partial\Omega$ is a bounded $C^2$ surface, it is an exercise to show that
\begin{equation}
\sup_{x\in\Omega}\int_{\partial\Omega} |K(x-y)|^{3/4}\,d\upsigma(y)<+\infty.
\end{equation}
Therefore, since $|K(x)|^{5/4}\leq C|x|^{-5/2}$ for all $x\in\R^3\setminus\{0\}$, using Fubini's theorem, that 
$-5/2>-3$, and that $\Omega$ is bounded, we obtain
\begin{equation}
\|w_g\|_{L^2(\Omega)^2}^2\leq C\int_{\partial\Omega}|g(z)|^{2}\int_\Omega |K(x-z)|^{5/4}\,dx\,d\upsigma(z)
\leq C\|g\|_{L^2(\partial\Omega)^2}^2
\end{equation}
for some $C>0$ depending only on $\Omega$. This proves that $w_g\in L^2(\Omega)^2$. Moreover, $w_g$ is continuously differentiable in $\Omega$. In particular, using that $-i\sigma\cdot\nabla K(x)=0$ for all $x\in\R^3\setminus\{0\}$, we get that $\sigma \cdot \nabla w_g(x)=0$ for all $x\in\Omega$.
 
Assume now that $v \in L^2(\Omega)^2$ satisfies $\langle v, w\rangle_{L^2(\Omega)^2}=0$ 
for all $w\in L^2(\Omega)^2$ such that $ \sigma \cdot \nabla w=0$ in $\Omega$. Then, for every $g\in C^\infty(\partial\Omega)^2$, we have
\begin{equation}\label{eq:norm_conv_zero_trace}
\begin{split}
0=\langle v, w_g\rangle_{L^2(\Omega)^2}
&=\int_\Omega \int_{\partial\Omega} v(x)\cdot\overline{K(x-y)g(y)}\,d\upsigma(y)\,dx\\
&=\int_{\partial\Omega} \int_\Omega K(y-x)v(x)\cdot\overline{g(y)}\,dx\,d\upsigma(y)
=\langle v_K, g\rangle_{L^2(\partial\Omega)^2}.
\end{split}
\end{equation}
A comment on the third and fourth equalities in \eqref{eq:norm_conv_zero_trace} are in order.\footnote{The equality 
$\langle v, w_g\rangle_{L^2(\Omega)^2}
=\langle v_K, g\rangle_{L^2(\partial\Omega)^2}$ can also be proven using that $v=-i\sigma\cdot\nabla v_K$ and integration by parts. However, this approach would require to introduce the Plemelj-Sokhotski jump formulas for $w_g$, which are not used in the rest of this article.} To justify the third equality we used Fubini's theorem and the estimate
\begin{equation}\label{eq:dom_conv_estimate}
\begin{split}
\Big(\int_\Omega &\int_{\partial\Omega} |v(x)||K(x-y)||g(y)|\,d\upsigma(y)\,dx\Big)^{2}\\
&\leq\Big(\int_\Omega |v(x)|^2 \int_{\partial\Omega} |K(x-y)|^{3/4}\,d\upsigma(y)\,dx\Big)
\Big(\int_{\partial\Omega}|g(\tilde y)|^2\int_\Omega  |K(\tilde x-\tilde y)|^{5/4}\,d\tilde x\,d\upsigma(\tilde y)\Big)\\
&<+\infty,
\end{split}
\end{equation}
which follows similarly to what we argued to prove that $w_g\in L^2(\Omega)^2$. 

Next, we justify the fourth equality in \eqref{eq:norm_conv_zero_trace}, in which the term $v_K$ on the right hand side denotes the trace on $\partial\Omega$ of $v_K\in H^1(\Omega)^2$. To do it, we will follow the ideas from the proof of \cite[Lemma 2.10]{AMV2014}. Given 
$\epsilon>0$ set $\Omega_\epsilon := \{x\in \Omega: \operatorname{dist}(x, \partial \Omega)>\epsilon \}$ and 
$v_\epsilon:= \chi_{\Omega_\epsilon} v$, where $\chi_{\Omega_\epsilon}$ is the characteristic function of $\Omega_\epsilon$. Let $(v_\epsilon)_K$ be defined as in $\eqref{def:Newt_pot}$ replacing $v$ by $v_\epsilon$. 
On the one hand, note that $(v_\epsilon)_K$ is continuous in a neighborhood of $\partial\Omega$ since $v_\epsilon$ vanishes outside $\Omega_\epsilon$. Hence,
the trace of $(v_\epsilon)_K$ on 
$\partial\Omega$  is given by the formula
\begin{equation}\label{eq:trace_approx_vK_vKeps2}
(v_\epsilon)_K(y) = \int_{\Omega} K(y-x)v_\epsilon(x) \,dx = \int_{\Omega_\epsilon} K(y-x)v(x) \,dx \quad \text{for all } y\in \partial \Omega.
\end{equation}
On the other hand, by the trace theorem and \eqref{eq:vk_H1_v_L2} applied to $v_\epsilon-v$, we have
\begin{equation}\label{eq:trace_approx_vK_vKeps}
 \|(v_\epsilon)_K - v_K\|_{L^2(\partial\Omega)^2} \leq C \|(v_\epsilon)_K - v_K\|_{H^1(\Omega)^2} \leq C \|v_\epsilon - v\|_{L^2(\Omega)^2},
\end{equation}
for some $C>0$ depending only on $\Omega$. 
Now, applying \eqref{eq:trace_approx_vK_vKeps} and \eqref{eq:trace_approx_vK_vKeps2}, and then using dominated convergence thanks to \eqref{eq:dom_conv_estimate}, we conclude that
\begin{equation}
    \begin{split}
        \langle v_K, g\rangle_{L^2(\partial\Omega)^2} & = 
        \lim_{\epsilon \downarrow 0}  \,\langle (v_\epsilon)_K, g\rangle_{L^2(\partial\Omega)^2} 
        = \lim_{\epsilon \downarrow 0}  \int_{\partial \Omega} \int_{\Omega_\epsilon} K(y-x)v(x) \cdot \overline{g(y)} \,dx\,d\upsigma(y) \\
        & = \int_{\partial \Omega} \int_{\Omega} K(y-x)v(x) \cdot \overline{g(y)} \,dx\,d\upsigma(y).
    \end{split}
\end{equation}

Once \eqref{eq:norm_conv_zero_trace} is fully justified, since $v_K\in H^1(\Omega)^2$ and \eqref{eq:norm_conv_zero_trace} holds for all $g\in C^\infty(\partial\Omega)^2$, we deduce that the trace of $v_K$ on $\partial\Omega$ vanishes and, therefore,
$v_K\in H^1_0(\Omega)^2$.

Finally, assume as before that $v \in L^2(\Omega)^2$ satisfies $\langle v, w\rangle_{L^2(\Omega)^2}=0$ 
for all $w\in L^2(\Omega)^2$ such that $ \sigma \cdot \nabla w=0$ in $\Omega$ and now, in addition, assume that $\sigma \cdot \nabla v \in L^2(\Omega)^2$. Recall that $-i\sigma\cdot\nabla K=\delta_0$, which leads to $-i\sigma\cdot\nabla v_K=v$ by the definition of $v_K$ in terms of $v$. Therefore, in the sense of distributions in $\Omega$, it holds that
\begin{equation}
-\Delta v_K=(-i\sigma\cdot\nabla)^2 v_K
=-i\sigma\cdot\nabla v \in L^2(\Omega)^2.
\end{equation}
Since $v_K\in H^1_0(\Omega)^2$, we get that
$-\Delta v_K\in L^2(\Omega)^2$ in the weak sense in $H^1_0(\Omega)^2$. Using now that $\Omega$ is bounded, 
that $\partial\Omega$ is of class $C^2$, and the boundary $H^2$-regularity theorem from \cite[Theorem 4 in Section 6.3.2]{Evans2010}, we deduce that $v_K\in H^2(\Omega)^2$ and that there exists $C>0$ only depending on $\Omega$ such that
\begin{equation}\label{eq:norm_conv_reg_est_H2}
\|v_K\|_{H^2(\Omega)^2}
\leq C\big(\|v_K\|_{L^2(\Omega)^2}+\|\Delta v_K\|_{L^2(\Omega)^2}\big).
\end{equation}

The proof of the lemma finishes as follows.
On the one hand, since $v_K\in H^2(\Omega)^2$ and
$-i\sigma\cdot\nabla v_K=v$, we get $v\in H^1(\Omega)^2$ and $\|v\|_{H^1(\Omega)^2}\leq C\|v_K\|_{H^2(\Omega)^2}$.
On the other hand, \eqref{eq:vk_H1_v_L2} leads to
$\|v_K\|_{L^2(\Omega)^2}\leq C\|v\|_{L^2(\Omega)^2}$. With these estimates in hand, and recalling that 
$-\Delta v_K=-i\sigma\cdot\nabla v$, from 
\eqref{eq:norm_conv_reg_est_H2} we conclude that
\begin{equation}
\|v\|_{H^1(\Omega)^2}\leq C\|v_K\|_{H^2(\Omega)^2}
\leq C\big(\|v_K\|_{L^2(\Omega)^2}+\|\Delta v_K\|_{L^2(\Omega)^2}\big)
\leq C\big(\|v\|_{L^2(\Omega)^2}+\|\sigma\cdot\nabla v\|_{L^2(\Omega)^2}\big)
\end{equation}
for some $C>0$ depending only on $\Omega$.
\end{proof}

For our proof of \Cref{thm:res_conv_2}, apart from \Cref{Lemma:AlphaGradEstimates_ort}, we will also use the following result on traces, which is a restatement of \cite[Proposition 2.1]{Vega2016}.

\begin{lemma}\label{Lemma:SigmaGradEstimates}
Let $v \in L^2(\Omega)^2$ be such that $ \sigma \cdot \nabla v \in L^2(\Omega)^2$. Then, the trace of $v$ belongs to $H^{-1/2}(\partial \Omega)^2$ and satisfies 
$$\|v \|_{H^{-1/2}(\partial \Omega)^2} \leq C (   \| v \|_{L^2(\Omega)^2}    + \| \sigma \cdot \nabla v \|_{L^2(\Omega)^2} )$$
for some $C>0$ depending only on $\Omega$.
\end{lemma}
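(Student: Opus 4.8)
The plan is to deduce the estimate from the integration-by-parts (Green) identity for the operator $\sigma\cdot\nabla$, combined with the density result in \Cref{l:density} to accommodate the low regularity of $v$.

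\emph{Step 1: the case $v\in H^1(\Omega)^2$.} Here the trace $v|_{\partial\Omega}\in H^{1/2}(\partial\Omega)^2$ is the classical one. Since each $\sigma_j$ is Hermitian, integrating by parts component-wise and summing yields, for every $G\in H^1(\Omega)^2$,
\[
\int_{\partial\Omega}(\sigma\cdot\nu)\,v\cdot\overline{G}\,d\upsigma
=\int_{\Omega}(\sigma\cdot\nabla v)\cdot\overline{G}\,dx+\int_{\Omega}v\cdot\overline{\sigma\cdot\nabla G}\,dx.
\]
Given $g\in H^{1/2}(\partial\Omega)^2$, I would take $G:=E\big((\sigma\cdot\nu)g\big)\in H^1(\Omega)^2$, where $E$ is a bounded extension operator as in the proof of \Cref{thm:res_conv_1}; since $\nu\in C^1(\partial\Omega)$, multiplication by $\sigma\cdot\nu$ is bounded on $H^{1/2}(\partial\Omega)^2$ (e.g.\ by \cite[Lemma A.2]{Behrndt2020}), so $\|G\|_{H^1(\Omega)^2}\le C\|g\|_{H^{1/2}(\partial\Omega)^2}$ with $C=C(\Omega)$. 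Using that $(\sigma\cdot\nu)^{*}(\sigma\cdot\nu)=(\sigma\cdot\nu)^2=|\nu|^2 I_2=I_2$, the left-hand side above becomes $\int_{\partial\Omega}v\cdot\overline{g}\,d\upsigma$, and Cauchy--Schwarz on the right-hand side gives
\[
\Big|\int_{\partial\Omega}v\cdot\overline{g}\,d\upsigma\Big|
\le C\big(\|v\|_{L^2(\Omega)^2}+\|\sigma\cdot\nabla v\|_{L^2(\Omega)^2}\big)\,\|g\|_{H^{1/2}(\partial\Omega)^2}.
\]
By \eqref{eq:dual_pairing_L2} and the definition of the $H^{-1/2}(\partial\Omega)^2$-norm this is exactly the asserted inequality, for $v\in H^1(\Omega)^2$.

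\emph{Step 2: the general case by density.} For $v\in L^2(\Omega)^2$ with $\sigma\cdot\nabla v\in L^2(\Omega)^2$, I would pick, via \Cref{l:density}, functions $v_n\in H^1(\Omega)^2$ with $\|v-v_n\|_{L^2(\Omega)^2}+\|\sigma\cdot\nabla(v-v_n)\|_{L^2(\Omega)^2}\to0$. Applying the Step~1 estimate to the differences $v_n-v_m$ shows that the traces $v_n|_{\partial\Omega}$ form a Cauchy sequence in $H^{-1/2}(\partial\Omega)^2$; one then defines the trace of $v$ as their limit, which is independent of the approximating sequence and coincides with the classical trace when $v\in H^1(\Omega)^2$ (again by the Step~1 estimate applied to differences). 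Passing the Step~1 estimate to the limit along $v_n$ delivers the claimed bound for $v$.

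The main obstacle is the mapping property invoked for the test function, namely that multiplication by $\sigma\cdot\nu$ is bounded on $H^{1/2}(\partial\Omega)^2$, so that $(\sigma\cdot\nu)g$ is an admissible boundary datum with $H^{1/2}$-norm controlled by that of $g$; this is where the $C^2$-regularity of $\partial\Omega$ enters. It is standard but not entirely trivial, and can be quoted from \cite[Lemma A.2]{Behrndt2020}. Everything else --- the integration by parts, the pointwise identity $(\sigma\cdot\nu)^2=I_2$, and the limiting argument --- is routine. An alternative is to bound $(\sigma\cdot\nu)v$ in $H^{-1/2}(\partial\Omega)^2$ directly from the Green identity and then invert $\sigma\cdot\nu$ on $H^{-1/2}(\partial\Omega)^2$, but by duality this requires the same multiplier fact, so no simplification is gained.
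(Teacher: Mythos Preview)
Your argument is correct. The paper itself does not prove this lemma: it simply states it as a restatement of \cite[Proposition 2.1]{Vega2016} and gives no argument. So there is no ``paper's own proof'' to compare against beyond the cited reference.

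That said, your proof is precisely the standard route to such trace estimates and is almost certainly what underlies the cited result: derive the Green identity for $\sigma\cdot\nabla$, test against $G=E((\sigma\cdot\nu)g)$ to turn the boundary term into $\langle v,g\rangle_{L^2(\partial\Omega)^2}$, and then pass to the closure using the density lemma. Two small comments. First, the tools you invoke ---the multiplier bound for $\sigma\cdot\nu$ on $H^{1/2}(\partial\Omega)^2$ from \cite[Lemma A.2]{Behrndt2020} and the bounded extension $E$--- are exactly the ones the paper already uses in the proof of \Cref{thm:res_conv_1}, so your argument is fully self-contained within the paper's framework. Second, your density step simultaneously \emph{defines} the trace for rough $v$ and proves the estimate; this is the right way to read the statement, since for $v\notin H^1(\Omega)^2$ the trace is not a priori given. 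The consistency check you mention (agreement with the classical trace when $v\in H^1(\Omega)^2$) closes the loop.

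In short: the paper outsources the proof, while you supply one; your approach is the natural one and requires nothing beyond what the paper already imports.
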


We finally have all the ingredients to prove  \Cref{thm:res_conv_2}.

\begin{proof}[Proof of \Cref{thm:res_conv_2}] 
Given $\lambda\in\C\setminus\R$ and $f,g\in L^2(\Omega)^4$, set 
\begin{equation}
\begin{split}
&\psi_\tau =(u_\tau, v_\tau)^\intercal:= (\Dirac_\tau-\lambda)^{-1}f \in \operatorname{Dom}(\Dirac_\tau),\\
&\varphi =(u, v)^\intercal := (\Dirac_{+\infty}-\overline\lambda)^{-1}P_+g \in 
\operatorname{Dom}(\Dirac_{+\infty}).
\end{split}
\end{equation}
Let us prove that $\varphi$ also belongs to  
$\ker(\Dirac_{+\infty}+ m)^{\perp}$ and to $H^1(\Omega)^4$. Given
$\psi \in \ker(\Dirac_{+\infty}+ m)$, since both $\varphi$ and $\psi$ belong to $\operatorname{Dom}(\Dirac_{+\infty})$ and 
$\Dirac_{+\infty}$ is self-adjoint in $L^2(\Omega)^4$, we see that
\begin{equation}
\begin{split}
-(m+\overline\lambda)\langle\varphi,\psi\rangle_{L^2(\Omega)^4}
&=\langle-\overline\lambda\varphi,\psi\rangle_{L^2(\Omega)^4}
+\langle\varphi,-m\psi\rangle_{L^2(\Omega)^4}\\
&=\langle-\overline\lambda\varphi,\psi\rangle_{L^2(\Omega)^4}
+\langle\varphi,\Dirac_{+\infty}\psi\rangle_{L^2(\Omega)^4}\\
&=\langle-\overline\lambda\varphi,\psi\rangle_{L^2(\Omega)^4}
+\langle\Dirac_{+\infty}\varphi,\psi\rangle_{L^2(\Omega)^4}\\
&=\langle(\Dirac_{+\infty}-\overline\lambda)\varphi,\psi\rangle_{L^2(\Omega)^4}\\
&=\langle P_+g,\psi\rangle_{L^2(\Omega)^4}.
\end{split}
\end{equation}
Recall that $P_+$ is a projection onto 
$\ker(\Dirac_{+\infty}+ m)^{\perp}$ and that $\psi \in \ker(\Dirac_{+\infty}+ m)$. Thus,
$\langle P_+g,\psi\rangle_{L^2(\Omega)^4}=0$ and, since 
$m+\overline\lambda\neq0$, we deduce that
$\langle\varphi,\psi\rangle_{L^2(\Omega)^4}=0$.
In conclusion, $\varphi \in \operatorname{Dom}(\Dirac_{+\infty})\cap \ker(\Dirac_{+\infty}+ m)^{\perp}$. Then, thanks to \Cref{Lemma:AlphaGradEstimates_ort}, we also get $\varphi\in H^1(\Omega)^4$ (and, in particular, $v\in H^1(\Omega)^2$).

Denote 
$$W_{\tau}:=P_{+}\big((\Dirac_{+\infty}-\lambda)^{-1} - (\Dirac_\tau-\lambda)^{-1}\big).$$
Using that $P_+$ is self-adjoint and integration by parts, we have
\begin{equation}\label{eq:density_delta_weak_5}
\begin{split}
\langle W_\tau f,g\rangle_{L^2(\Omega)^4}
&=\langle(\Dirac_{+\infty}-\lambda)^{-1}f - (\Dirac_\tau-\lambda)^{-1}f, P_+g \rangle_{L^2(\Omega)^4}\\
& = \langle f, (\Dirac_{+\infty}-\overline\lambda)^{-1}P_+g \rangle_{L^2(\Omega)^4} - \langle (\Dirac_\tau-\lambda)^{-1}f, P_+g \rangle_{L^2(\Omega)^4} \\
& = \langle (\Dirac_\tau-\lambda)\psi_\tau, \varphi \rangle_{L^2(\Omega)^4} 
- \langle \psi_\tau, (\Dirac_{+\infty}-\overline\lambda)\varphi \rangle_{L^2(\Omega)^4} \\
& = -i\langle \alpha\cdot\nabla \psi_\tau, \varphi \rangle_{L^2(\Omega)^4} -i \langle \psi_\tau, \alpha\cdot\nabla \varphi \rangle_{L^2(\Omega)^4} \\
&=-i\langle(\alpha\cdot\nu) \psi_\tau,\varphi\rangle_{L^2(\partial\Omega)^4}.
\end{split}
\end{equation}
Since $\psi_\tau \in \operatorname{Dom}(\Dirac_\tau)$, it holds that $\psi_\tau = i(\sinh \tau -\cosh \tau\, \beta)(\alpha \cdot \nu) \psi_\tau$ on $\partial \Omega$. This leads to $-i(\alpha \cdot \nu) \psi_\tau = (\sinh \tau +\cosh \tau\, \beta) \psi_\tau$ on $\partial \Omega$, and then \eqref{eq:density_delta_weak_5} yields
\begin{equation}
\begin{split}
\langle W_\tau f,g\rangle_{L^2(\Omega)^4}
=\langle (\sinh \tau +\cosh \tau\, \beta) \psi_\tau,\varphi\rangle_{L^2(\partial\Omega)^4}.
\end{split}
\end{equation}
Using now that $\varphi = (u, v)^\intercal$ with 
$u\in H^1_0(\Omega)^2$, which means that $u=0$ on 
$\partial\Omega$, and that $\sinh \tau -\cosh \tau = - e^{-\tau}$, we get
\begin{equation}
\begin{split}
\langle W_\tau f,g\rangle_{L^2(\Omega)^4}
=-e^{-\tau}\langle v_\tau,v\rangle_{L^2(\partial\Omega)^2}.
\end{split}
\end{equation}
Since both $v_\tau$ and $v$ belong to $H^1(\Omega)^2$,
by \eqref{eq:dual_pairing_L2} we have
$$\overline{\langle v_\tau,v \rangle _{L^2(\partial\Omega)^2}}
=\langle v,v_\tau \rangle _{L^2(\partial\Omega)^2} = \langle v_\tau, v \rangle_{H^{-1/2}(\partial\Omega)^2, H^{1/2}(\partial\Omega)^2}.$$ 
Then, using the trace theorem from $H^{1}(\Omega)^2$ to $H^{1/2}(\partial\Omega)^2$, \Cref{Lemma:SigmaGradEstimates} applied to $v_\tau$, and  \Cref{Lemma:AlphaGradEstimates_ort} applied to $\varphi$, we deduce that
\begin{equation}\label{eq:density_delta_weak_6}
\begin{split}
|\langle W_\tau f,g\rangle_{L^2(\Omega)^4}|
&=e^{-\tau}|\langle v_\tau,v\rangle_{L^2(\partial\Omega)^2}|\\
&=e^{-\tau}|\langle v_\tau,v\rangle_{H^{-1/2}(\partial\Omega)^2, H^{1/2}(\partial\Omega)^2}|\\
&\leq C e^{-\tau}\|v_\tau\|_{H^{-1/2}(\partial\Omega)^2}
\|\varphi\|_{H^{1}(\Omega)^4}\\
&\leq Ce^{-\tau}
 \big(   \| \psi_\tau \|_{L^2(\Omega)^4}    + \| \alpha \cdot \nabla \psi_\tau \|_{L^2(\Omega)^4} \big)
\big(   \| \varphi \|_{L^2(\Omega)^4}    + \| \alpha \cdot \nabla \varphi \|_{L^2(\Omega)^4} \big)
\end{split}
\end{equation}
for some $C>0$ depending only on $\Omega$.

Next, note that if $A:\operatorname{Dom}(A)\subset L^2(\Omega)^4\to L^2(\Omega)^4$ is a self-adjoint operator then 
\begin{equation}\label{eq:weak_bdd_inv}
\begin{split}
\|(A-\lambda)h\|^2_{L^2(\Omega)^4}
=\|(A-\re(\lambda))h\|^2_{L^2(\Omega)^4}+\im(\lambda)^2\|h\|^2_{L^2(\Omega)^4}
\geq\im(\lambda)^2\|h\|^2_{L^2(\Omega)^4}
\end{split}
\end{equation}
for all $h\in\operatorname{Dom}(A)$,
which yields $\|(A-\lambda)^{-1}\|_{L^2(\Omega)^4\to L^2(\Omega)^4}\leq1/|\!\im(\lambda)|$.
Since $\psi_\tau=(\Dirac_\tau-\lambda)^{-1}f$, by the triangle inequality and \eqref{eq:weak_bdd_inv} applied to $A=\Dirac_\tau$ and $h=\psi_\tau$, we see that
\begin{equation} 
\begin{split} \label{eq:bound_norm_DiracSobolev}
\| \psi_\tau \|_{L^2(\Omega)^4}+\| \alpha \cdot \nabla \psi_\tau \|_{L^2(\Omega)^4}
&\leq\| (-i\alpha \cdot \nabla+m\beta-\lambda) \psi_\tau \|_{L^2(\Omega)^4}+
(1+m+|\lambda|)\| \psi_\tau \|_{L^2(\Omega)^4}\\
&=\| (\Dirac_\tau-\lambda) \psi_\tau \|_{L^2(\Omega)^4}+
(1+m+|\lambda|)\| \psi_\tau \|_{L^2(\Omega)^4}\\
&\leq\Big(1+\frac{1+m+|\lambda|}{|\!\im(\lambda)|}\Big)
\| f \|_{L^2(\Omega)^4}.
\end{split}
\end{equation}
Similarly, since $\varphi = (\Dirac_{+\infty}-\overline\lambda)^{-1}P_+g$, by \eqref{eq:weak_bdd_inv} applied to 
$A=\Dirac_{+\infty}$ and $h=\varphi$, we get
\begin{equation}
\begin{split}
\| \varphi \|_{L^2(\Omega)^4}+\| \alpha \cdot \nabla \varphi \|_{L^2(\Omega)^4}
\leq\Big(1+\frac{1+m+|\lambda|}{|\!\im(\lambda)|}\Big)
\| P_+g \|_{L^2(\Omega)^4}
\leq\Big(1+\frac{1+m+|\lambda|}{|\!\im(\lambda)|}\Big)
\| g \|_{L^2(\Omega)^4},
\end{split}
\end{equation}
where we also used in the last inequality that 
$\|P_{+}\|_{L^2(\Omega)^4\to L^2(\Omega)^4}=1$. 
Plugging these two estimates into \eqref{eq:density_delta_weak_6} yields 
\begin{equation}
\begin{split}
|\langle W_\tau f,g\rangle_{L^2(\Omega)^4}|
\leq Ce^{-\tau}
 \Big(1+\frac{1+m+|\lambda|}{|\!\im(\lambda)|}\Big)^2
 \| f \|_{L^2(\Omega)^4}\| g \|_{L^2(\Omega)^4}.
\end{split}
\end{equation}
If we now divide both sides of this last inequality by 
$\| f \|_{L^2(\Omega)^4}\| g \|_{L^2(\Omega)^4}$ and we take the supremum among all functions  
$f,g\in L^2(\Omega)^4\setminus\{0\}$, we conclude that
\begin{equation}
\|W_\tau\|_{L^2(\Omega)^4 \to L^2(\Omega)^4}
\leq Ce^{-\tau}
\Big(1+\frac{1+m+|\lambda|}{|\!\im(\lambda)|}\Big)^2
\end{equation}
for some $C>0$ depending only on $\Omega$. The theorem follows by taking $\tau\uparrow+\infty$.
\end{proof}

\section{Convergence in the norm resolvent sense for finite values} \label{Sec:Continuity_Resolvent}

In this section we give a proof of \Cref{thm:res_conv_norm} based on regularity estimates, contrary to the proof given in \cite{Duran2024}, which is based on the analyticity in $\tau$ of the resolvent of $\Dirac_\tau$. We begin the section showing that, for every $\tau\in\R$,
 the resolvent of $\Dirac_\tau$ is a bounded operator from $L^2(\Omega)^4$ to $H^1(\Omega)^4$. This is a simple consequence of the self-adjointness of $\Dirac_\tau$ and the closed graph theorem, as the following lemma shows. 

\begin{lemma} \label{Corollary:compactResolvent}
    Given $\tau \in \R$, let $\Dirac_\tau$ be defined as in \eqref{def:Dirac_tau}. Then, for every $\lambda \in \C \setminus \R$, the resolvent $(\Dirac_\tau - \lambda)^{-1}$ is a bounded operator from $L^2(\Omega)^4$ to $H^1(\Omega)^4$. As a consequence, 
    $(\Dirac_\tau - \lambda)^{-1}$ is a compact operator from $L^2(\Omega)^4$ to $L^2(\Omega)^4$ and
    $\sigma(\Dirac_\tau)$ is purely discrete.
\end{lemma}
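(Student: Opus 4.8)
The plan is to deduce the boundedness $L^2(\Omega)^4\to H^1(\Omega)^4$ from the closed graph theorem, and then to read off compactness and discreteness of the spectrum from standard facts. First, note that by the very definition \eqref{def:Dirac_tau} one has $\operatorname{Dom}(\Dirac_\tau)\subset H^1(\Omega)^4$, so for every $\lambda\in\C\setminus\R$ the resolvent $(\Dirac_\tau-\lambda)^{-1}$ is a well-defined linear map from $L^2(\Omega)^4$ into $H^1(\Omega)^4$ (here we use that $\Dirac_\tau$ is self-adjoint, hence that $\lambda$ lies in its resolvent set). What remains is to show that this map is bounded when $H^1(\Omega)^4$ is equipped with its natural norm.

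To apply the closed graph theorem I would take a sequence $\{f_n\}_n\subset L^2(\Omega)^4$ with $f_n\to f$ in $L^2(\Omega)^4$ and $(\Dirac_\tau-\lambda)^{-1}f_n\to\psi$ in $H^1(\Omega)^4$, and check that $\psi=(\Dirac_\tau-\lambda)^{-1}f$. On the one hand, since the inclusion $H^1(\Omega)^4\hookrightarrow L^2(\Omega)^4$ is continuous, the convergence $(\Dirac_\tau-\lambda)^{-1}f_n\to\psi$ also holds in $L^2(\Omega)^4$. On the other hand, by self-adjointness of $\Dirac_\tau$ and the estimate \eqref{eq:weak_bdd_inv} applied to $A=\Dirac_\tau$, the operator $(\Dirac_\tau-\lambda)^{-1}$ is bounded on $L^2(\Omega)^4$ with norm at most $1/|\!\im(\lambda)|$, so $(\Dirac_\tau-\lambda)^{-1}f_n\to(\Dirac_\tau-\lambda)^{-1}f$ in $L^2(\Omega)^4$. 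By uniqueness of the $L^2$-limit, $\psi=(\Dirac_\tau-\lambda)^{-1}f$, the graph of $(\Dirac_\tau-\lambda)^{-1}\colon L^2(\Omega)^4\to H^1(\Omega)^4$ is closed, and the closed graph theorem yields the claimed boundedness.

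For the consequences, recall that $\Omega$ is bounded with $C^2$ (in particular Lipschitz) boundary, so by the Rellich--Kondrachov theorem the embedding $H^1(\Omega)^4\hookrightarrow L^2(\Omega)^4$ is compact. Composing the bounded operator $(\Dirac_\tau-\lambda)^{-1}\colon L^2(\Omega)^4\to H^1(\Omega)^4$ with this compact embedding shows that $(\Dirac_\tau-\lambda)^{-1}$ is compact as an operator on $L^2(\Omega)^4$. Finally, since $\Dirac_\tau$ is self-adjoint and has compact resolvent, its spectrum is purely discrete, which gives the last assertion. None of these steps is a genuine obstacle: the only point requiring care is to reduce the $H^1$-boundedness to the closed graph theorem rather than attempting a direct quantitative regularity estimate by hand (such an estimate, with explicit control in $\tau$, is established separately in \Cref{thm:ContinuityResolvent}).
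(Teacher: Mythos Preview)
Your proof is correct and follows essentially the same route as the paper's own argument: both reduce the $L^2\to H^1$ boundedness to the closed graph theorem and then invoke Rellich--Kondrachov and the standard compact-resolvent criterion. The only cosmetic difference is that you verify closedness of the graph by appealing to the already-known $L^2$-boundedness of the resolvent, whereas the paper checks directly that the $H^1$-limit lies in $\operatorname{Dom}(\Dirac_\tau)$ via the trace theorem; your variant is in fact slightly slicker.
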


\begin{proof}
Assume for the moment that $(\Dirac_\tau - \lambda)^{-1}$ is a bounded operator from $L^2(\Omega)^4$ to $H^1(\Omega)^4$, that is to say, that there exists $C_\tau>0$ such that  
\begin{equation}\label{eq:closed_graph_thm_dirac}
\|(\Dirac_\tau - \lambda)^{-1}f\|_{H^1(\Omega)^4} \leq C_\tau\|f\|_{L^2(\Omega)^4}\quad\text{for all $f\in L^2(\Omega)^4$.}
\end{equation} 
Then, using that $H^1(\Omega)^4$ is compactly embedded in $L^2(\Omega)^4$ since $\Omega$ is bounded, we deduce that $(\Dirac_\tau - \lambda)^{-1}$ is a compact operator from $L^2(\Omega)^4$ to $L^2(\Omega)^4$. This together with \cite[Proposition 8.8 and the paragraph below it]{Taylor2011} yields the discreteness of 
$\sigma(\Dirac_\tau)$. It only remains to prove \eqref{eq:closed_graph_thm_dirac}.
 
By self-adjointness, we know that 
$(\Dirac_\tau - \lambda)^{-1}$ is an everywhere defined and bounded operator in $L^2(\Omega)^4$, and that
$(\Dirac_\tau - \lambda)^{-1}(L^2(\Omega)^4) =\operatorname{Dom}(\Dirac_\tau)\subset H^1(\Omega)^4$. Thanks to the closed graph theorem \cite[Theorem 2.9]{Brezis2011}, to prove \eqref{eq:closed_graph_thm_dirac} it is enough to check that the graph
$$G:=\{(f,\varphi)\in L^2(\Omega)^4\times H^1(\Omega)^4:\,
(\Dirac_\tau - \lambda)^{-1}f=\varphi\in\operatorname{Dom}(\Dirac_\tau)\}$$
is closed in the Banach space $E:=L^2(\Omega)^4\times H^1(\Omega)^4$. This last assertion is what we will prove now. 
Assume that $(f_n,\varphi_n)\in G$ tends to $(f,\varphi)\in E$ as $n\uparrow+\infty$. Then, $f_n\to f$ in $L^2(\Omega)^4$ and 
$\varphi_n\to\varphi$ in $H^1(\Omega)^4$ as $n\uparrow+\infty$, which leads to 
$$(\Dirac_\tau-\lambda)\varphi
=\lim_{n\uparrow+\infty}(\Dirac_\tau-\lambda)\varphi_n
=\lim_{n\uparrow+\infty}f_n=f\quad\text{in }L^2(\Omega)^4.$$
In addition, since $\varphi_n = i (\sinh\tau- \cosh\tau \, \beta)( \alpha \cdot\nu ) \varphi_n$ in $L^2(\partial\Omega)^4$ for all $n$, by the trace theorem and the fact that 
$\varphi_n\to\varphi$ in $H^1(\Omega)^4$ as $n\uparrow+\infty$ we get that $\varphi= i (\sinh\tau- \cosh\tau \, \beta)( \alpha \cdot\nu ) \varphi$ in $L^2(\partial\Omega)^4$. Therefore, $\varphi\in \operatorname{Dom}(\Dirac_\tau)$ and $(\Dirac_\tau-\lambda)\varphi=f$. This means that 
$\varphi=(\Dirac_\tau-\lambda)^{-1}f$ and, thus, that 
$(f,\varphi)\in G$. That is to say, $G$ is closed in $E$.
\end{proof}

\begin{remark}\label{rk:disc_spec}
In the proof of \cite[Lemma 1.2]{Mas2022} it is said that the discreteness of $\sigma(\Dirac_\tau)$ follows from the compact embedding of $H^1(\Omega)^4$ in $L^2(\Omega)^4$, and no more details are given there. This compact embedding yields compactness of the resolvent ---and, thus, discreteness of the spectrum--- once the boundedness of the resolvent from $L^2(\Omega)^4$ to $H^1(\Omega)^4$ is shown. Since this last detail is omitted in the proof of \cite[Lemma 1.2]{Mas2022}, for the sake of completeness we find convenient to address it in the present work. In the proof of \Cref{Corollary:compactResolvent} we provide the full justification (with elementary arguments) of all these assertions.
\end{remark}

From the proof of \Cref{Corollary:compactResolvent} it is not clear how the constant $C_\tau$ on the right hand side of \eqref{eq:closed_graph_thm_dirac} depends on $\tau$. In some occasions, this could be a drawback when addressing a spectral analysis of  $\Dirac_\tau$ in terms of $\tau$.
The following result, whose proof does not use \eqref{eq:closed_graph_thm_dirac}, is the quantitative counterpart of \Cref{Corollary:compactResolvent}. We think that it has its own interest, and it may be useful to present it here for future references.

\begin{theorem} \label{thm:ContinuityResolvent}
    Let $\tau \in \R$, $\Dirac_\tau$ be defined as in \eqref{def:Dirac_tau}, and $\lambda \in \C \setminus \R$. Then, there exists $C>0$ depending only on $\Omega$ such that
    \begin{equation}
        \|(\Dirac_\tau - \lambda)^{-1}f\|_{H^1(\Omega)^4} \leq C\cosh\tau \left( 1+\frac{1+m+|\lambda|}{|\!\im(\lambda)|} \right) \|f\|_{L^2(\Omega)^4}
        \quad\text{for all $f\in L^2(\Omega)^4$.}
    \end{equation}
\end{theorem}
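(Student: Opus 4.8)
The plan is to obtain the quantitative $H^1$-bound by repeating, in a more careful bookkeeping, the kind of trace-and-integration-by-parts estimate used in the proof of \Cref{thm:res_conv_2}, but this time applied to the full domain function rather than to a projected one. Fix $\lambda\in\C\setminus\R$, $f\in L^2(\Omega)^4$, and set $\varphi=(u,v)^\intercal:=(\Dirac_\tau-\lambda)^{-1}f\in\operatorname{Dom}(\Dirac_\tau)\subset H^1(\Omega)^4$. Since $\varphi\in H^1(\Omega)^4$, the components $u,v$ lie in $H^1(\Omega)^2$ and satisfy $\sigma\cdot\nabla u,\sigma\cdot\nabla v\in L^2(\Omega)^2$, so by \cite[Lemma 2.1]{Mas2022} (the same inequality used as \eqref{l:est_up_comp}, but stated without the vanishing-trace hypothesis) one has, for each component,
\begin{equation}\label{eq:plan_H1_split}
\|u\|_{H^1(\Omega)^2}\le C\big(\|u\|_{L^2(\Omega)^2}+\|\sigma\cdot\nabla u\|_{L^2(\Omega)^2}+\|u\|_{H^{1/2}(\partial\Omega)^2}\big),
\end{equation}
and similarly for $v$; adding these gives $\|\varphi\|_{H^1(\Omega)^4}\le C\big(\|\varphi\|_{L^2(\Omega)^4}+\|\alpha\cdot\nabla\varphi\|_{L^2(\Omega)^4}+\|\varphi\|_{H^{1/2}(\partial\Omega)^4}\big)$. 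The $L^2(\Omega)$ and $\alpha\cdot\nabla$ terms on the right are already controlled by $C(1+\tfrac{1+m+|\lambda|}{|\im\lambda|})\|f\|_{L^2(\Omega)^4}$ exactly as in \eqref{eq:bound_norm_DiracSobolev}, using $\|(\Dirac_\tau-\lambda)^{-1}\|\le 1/|\im\lambda|$. So everything reduces to estimating the boundary norm $\|\varphi\|_{H^{1/2}(\partial\Omega)^4}$ by $C\cosh\tau\,(1+\tfrac{1+m+|\lambda|}{|\im\lambda|})\|f\|_{L^2(\Omega)^4}$, and it is here that the factor $\cosh\tau$ must enter.

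For the boundary term I would use the boundary condition $\varphi=i(\sinh\tau-\cosh\tau\,\beta)(\alpha\cdot\nu)\varphi$ on $\partial\Omega$. Written in components (as already recorded in the introduction) this is $u=-ie^{-\tau}(\sigma\cdot\nu)v$ on $\partial\Omega$; symmetrically, pairing with $\beta$ the other way gives $v=-ie^{\tau}(\sigma\cdot\nu)u$, or equivalently $e^{\tau}u=-i(\sigma\cdot\nu)v$ combined with the first relation. The upshot is that on $\partial\Omega$ one controls $u$ by $e^{-\tau}|v|$ and $v$ by $e^{\tau}|u|$, so a symmetric combination such as $e^{\tau/2}u$ and $e^{-\tau/2}v$ are comparable on the boundary; but since we only want an upper bound and $\cosh\tau$ dominates both $e^{\pm\tau}$ up to a constant, the crude estimate $\|\varphi\|_{H^{1/2}(\partial\Omega)^4}\le C\cosh\tau\,\|(\text{trace of the single smaller component})\|_{H^{1/2}}$ is what I would aim to extract. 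Concretely, I expect to bound $\|u\|_{H^{1/2}(\partial\Omega)^2}\le Ce^{-\tau}\|v\|_{H^{1/2}(\partial\Omega)^2}$ via $\|(\sigma\cdot\nu)\,\cdot\|_{H^{1/2}(\partial\Omega)}\le C\|\cdot\|_{H^{1/2}(\partial\Omega)}$ (using $\nu\in C^1$ and \cite[Lemma A.2]{Behrndt2020}, as in the proof of \Cref{thm:res_conv_1}), then the trace theorem $\|v\|_{H^{1/2}(\partial\Omega)^2}\le C\|v\|_{H^1(\Omega)^2}$, and finally absorb $\|v\|_{H^1(\Omega)^2}\le\|\varphi\|_{H^1(\Omega)^4}$ back into the left-hand side after multiplying by the small factor $e^{-\tau}$ — but this absorption only works for $\tau$ large, so to get a clean statement uniform in $\tau\in\R$ I would instead use both relations $u=-ie^{-\tau}(\sigma\cdot\nu)v$ and $v=-ie^{\tau}(\sigma\cdot\nu)u$ together with a weighted interpolation, or simply treat $\tau\ge0$ and $\tau\le0$ separately (the two cases being mirror images under $u\leftrightarrow v$, $\tau\leftrightarrow-\tau$) and note $\max(e^{\tau},e^{-\tau})\le 2\cosh\tau$.

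A cleaner route that avoids the self-absorption subtlety, and the one I would actually write, is to estimate the boundary term directly in the weaker space $H^{-1/2}(\partial\Omega)$ and combine with \Cref{Lemma:SigmaGradEstimates}: by that lemma $\|v\|_{H^{-1/2}(\partial\Omega)^2}\le C(\|v\|_{L^2(\Omega)^2}+\|\sigma\cdot\nabla v\|_{L^2(\Omega)^2})\le C(\|\varphi\|_{L^2(\Omega)^4}+\|\alpha\cdot\nabla\varphi\|_{L^2(\Omega)^4})$, which is already $\le C(1+\tfrac{1+m+|\lambda|}{|\im\lambda|})\|f\|_{L^2(\Omega)^4}$; then $u=-ie^{-\tau}(\sigma\cdot\nu)v$ gives the same bound for $e^{\tau}\|u\|_{H^{-1/2}(\partial\Omega)^2}$, and the mirror relation handles $e^{-\tau}\|v\|_{H^{-1/2}(\partial\Omega)^2}$; summing yields $\|\varphi\|_{H^{-1/2}(\partial\Omega)^4}\le C\cosh\tau\,(1+\tfrac{1+m+|\lambda|}{|\im\lambda|})\|f\|_{L^2(\Omega)^4}$. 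Unfortunately $H^{-1/2}$ is too weak to feed into \eqref{eq:plan_H1_split}, which needs $H^{1/2}$; so in the end the genuine obstacle is exactly the one flagged above — upgrading the boundary control to $H^{1/2}$ while keeping the $\cosh\tau$ dependence explicit — and I expect the resolution to be the elementary but slightly fiddly argument of splitting into $\tau\ge 0$ and $\tau\le 0$, using the appropriate one of the two component relations to express the component with the unfavorable exponential weight in terms of the other, applying $\|(\sigma\cdot\nu)\cdot\|_{H^{1/2}(\partial\Omega)}\le C\|\cdot\|_{H^{1/2}(\partial\Omega)}$ and the trace theorem, and absorbing. Once $\|\varphi\|_{H^{1/2}(\partial\Omega)^4}\le C\cosh\tau\,(1+\tfrac{1+m+|\lambda|}{|\im\lambda|})\|f\|_{L^2(\Omega)^4}$ is in hand, plugging it together with the $L^2$ and $\alpha\cdot\nabla$ bounds into \eqref{eq:plan_H1_split} gives the claimed inequality, and since all constants track only through $\Omega$ and the explicit factor $\cosh\tau\,(1+\tfrac{1+m+|\lambda|}{|\im\lambda|})$, the theorem follows.
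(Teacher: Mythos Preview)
Your proof has a genuine gap at the absorption step. Write $C_1$ for the constant in your generic estimate $\|w\|_{H^1(\Omega)^2}\le C_1\big(\|w\|_{L^2}+\|\sigma\cdot\nabla w\|_{L^2}+\|w\|_{H^{1/2}(\partial\Omega)}\big)$ and $C_2$ for the combined constant coming from $\|(\sigma\cdot\nu)\,\cdot\,\|_{H^{1/2}(\partial\Omega)}\le C\|\cdot\|_{H^{1/2}(\partial\Omega)}$ followed by the trace theorem. For $\tau\ge0$ the relation $u=-ie^{-\tau}(\sigma\cdot\nu)v$ yields $\|u\|_{H^1}\le C_1(\text{good}_u)+C_1C_2e^{-\tau}\|v\|_{H^1}$, while the mirror relation $v=-ie^{\tau}(\sigma\cdot\nu)u$ yields $\|v\|_{H^1}\le C_1(\text{good}_v)+C_1C_2e^{\tau}\|u\|_{H^1}$. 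Substituting one into the other, the factors $e^{\pm\tau}$ cancel exactly and closing the loop requires $(C_1C_2)^2<1$, a condition on fixed $\Omega$-dependent constants that you cannot enforce. The two boundary relations are in fact equivalent (each implies the other since $(\sigma\cdot\nu)^2=I_2$), so combining them buys nothing, and there is no $\tau$-regime in which the bootstrap closes: your remark that the absorption ``only works for $\tau$ large'' actually undersells the problem --- with these ingredients it never works. The $H^{-1/2}$ route you also tried is, as you noted, too weak to feed back into an $H^1$ estimate.

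The paper avoids the $H^{1/2}(\partial\Omega)$ boundary term altogether. For $\tau=0$ it uses the MIT bag curvature identity
$\|\nabla\psi\|_{L^2(\Omega)}^2=\|\alpha\cdot\nabla\psi\|_{L^2(\Omega)}^2-\tfrac12\int_{\partial\Omega}\kappa|\psi|^2\,d\upsigma$
for $\psi\in\mathrm{Dom}(\Dirac_0)$ (from \cite{Arrizabalaga2017}); because this boundary term lives only in $L^2(\partial\Omega)$, the $\epsilon$-trace inequality $\|\psi\|_{L^2(\partial\Omega)}^2\le\epsilon\|\nabla\psi\|_{L^2(\Omega)}^2+C_\epsilon\|\psi\|_{L^2(\Omega)}^2$ of \cite{Behrndt2014} \emph{does} permit absorption. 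For general $\tau$, the paper observes (\Cref{Lemma:FromHtautoH0}) that $\varphi=(u,v)^\intercal\mapsto\psi:=(e^{\tau/2}u,\,e^{-\tau/2}v)^\intercal$ carries $\mathrm{Dom}(\Dirac_\tau)$ onto $\mathrm{Dom}(\Dirac_0)$ and distorts the relevant norms by at most $e^{|\tau|/2}$; applying the $\tau=0$ estimate to $\psi$ and undoing the rescaling produces \Cref{Lemma:boundH1normHtau} with the explicit factor $\cosh\tau$, from which the theorem follows via \eqref{eq:bound_norm_DiracSobolev}. The structural point you are missing is that the specific MIT boundary condition is what collapses the boundary contribution from $H^{1/2}$ down to $L^2$ and thereby makes absorption possible.
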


In view of the bound given in \eqref{eq:bound_norm_DiracSobolev}, \Cref{thm:ContinuityResolvent} would follow if the inclusion of $\mathrm{Dom}(\Dirac_\tau)$ in $H^1(\Omega)^4$ was continuous with respect to the graph norm of $\Dirac_\tau$ (in the same spirit as \eqref{ineq:cont_graph_h1}) with the suitable quantitative control on $\tau$. The following lemma shows that this is the case.

\begin{lemma} \label{Lemma:boundH1normHtau}
Given $\tau \in \R$, let $\Dirac_\tau$ be defined as in \eqref{def:Dirac_tau}. Then,
    \begin{equation}
        \|\nabla \varphi\|_{L^2(\Omega)^4} \leq C \cosh\tau \left( \| \varphi\|_{L^2(\Omega)^4} + \|\alpha\cdot\nabla \varphi\|_{L^2(\Omega)^4} \right)
    \end{equation}
    for all $\varphi \in \mathrm{Dom}(\Dirac_\tau)$, where $C>0$ depends only on $\Omega$.
\end{lemma}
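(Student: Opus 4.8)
The plan is to reduce the $H^1$-estimate for $\varphi\in\mathrm{Dom}(\Dirac_\tau)$ to the $H^1$-estimate for functions with vanishing trace, i.e.\ to \cite[Lemma 2.1]{Mas2022} (the same tool already used in the proof of \Cref{Lemma:AlphaGradEstimates_ort}), at the cost of subtracting a controlled $H^1$-correction that absorbs the boundary data. Concretely, write $\varphi=(u,v)^\intercal$. The boundary condition in $\mathrm{Dom}(\Dirac_\tau)$ reads $u=-ie^{-\tau}(\sigma\cdot\nu)v$ on $\partial\Omega$; equivalently, in the other block, $v=-ie^{\tau}(\sigma\cdot\nu)u$ on $\partial\Omega$. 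The trace of $v$ lies in $H^{1/2}(\partial\Omega)^2$ since $v\in H^1(\Omega)^2$, so $u|_{\partial\Omega}=-ie^{-\tau}(\sigma\cdot\nu)v\in H^{1/2}(\partial\Omega)^2$ with $\|u\|_{H^{1/2}(\partial\Omega)^2}\le Ce^{-\tau}\|v\|_{H^{1/2}(\partial\Omega)^2}$ by \cite[Lemma A.2]{Behrndt2020} (multiplication by the $C^1$ matrix $-i(\sigma\cdot\nu)$). Let $E:H^{1/2}(\partial\Omega)^2\to H^1(\Omega)^2$ be a bounded extension operator and set $w:=E(u|_{\partial\Omega})$, so that $u-w\in H^1_0(\Omega)^2$ and $\|w\|_{H^1(\Omega)^2}\le Ce^{-\tau}\|v\|_{H^{1/2}(\partial\Omega)^2}$.

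Next I would apply \cite[Lemma 2.1]{Mas2022} to $u-w$ (vanishing trace) and to... here is the subtlety: the companion component $v$ does \emph{not} have vanishing trace in general. So instead I treat $v$ symmetrically: using $v|_{\partial\Omega}=-ie^{\tau}(\sigma\cdot\nu)u$ and $u\in H^1(\Omega)^2\hookrightarrow H^{1/2}(\partial\Omega)^2$ we get $\|v\|_{H^{1/2}(\partial\Omega)^2}\le Ce^{\tau}\|u\|_{H^{1/2}(\partial\Omega)^2}\le Ce^{\tau}\|u\|_{H^1(\Omega)^2}$, and set $z:=E(v|_{\partial\Omega})$ with $\|z\|_{H^1(\Omega)^2}\le Ce^{\tau}\|u\|_{H^1(\Omega)^2}$. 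This looks circular, so the right move is: keep the cheap direction for $u$ (factor $e^{-\tau}$) and only subtract the $H^1_0$-correction there, while for $v$ use \cite[Lemma 2.1]{Mas2022} with the estimate $\|v\|_{H^{-1/2}(\partial\Omega)^2}$ — no, \cite[Lemma 2.1]{Mas2022} as quoted requires zero trace. The clean resolution is to subtract corrections in \emph{both} components using the \emph{same} cheap direction symmetrized by $\cosh\tau$: set $w:=E(u|_{\partial\Omega})$ and $z:=E(v|_{\partial\Omega})$, with $\|w\|_{H^1}\le Ce^{-\tau}\|v\|_{H^{1/2}(\partial\Omega)^2}$ and, reading the boundary condition the other way, $\|z\|_{H^1}\le Ce^{\tau}\|u\|_{H^{1/2}(\partial\Omega)^2}$; then $(u-w,v-z)^\intercal$ has zero trace, so by \cite[Lemma 2.1]{Mas2022} applied to each block,
\begin{equation*}
\|\nabla(u-w)\|_{L^2}+\|\nabla(v-z)\|_{L^2}\le C\big(\|u-w\|_{L^2}+\|v-z\|_{L^2}+\|\sigma\cdot\nabla(u-w)\|_{L^2}+\|\sigma\cdot\nabla(v-z)\|_{L^2}\big).
\end{equation*}
Expanding, $\|\nabla\varphi\|_{L^2}$ is bounded by $C(\|\varphi\|_{L^2}+\|\alpha\cdot\nabla\varphi\|_{L^2})$ plus the correction terms $\|w\|_{H^1}+\|z\|_{H^1}$, and the trace theorem gives $\|v\|_{H^{1/2}(\partial\Omega)^2}\le C\|v\|_{H^1(\Omega)^2}\le C\|\varphi\|_{H^1(\Omega)^4}$ and likewise for $u$.

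The main obstacle is exactly the exponential asymmetry: the $z$-correction carries a factor $e^{\tau}$ against $\|u\|_{H^1}$, and both corrections feed back into $\|\varphi\|_{H^1}$, so a naive triangle inequality gives $\|\nabla\varphi\|_{L^2}\le C(\dots)+Ce^{\tau}\|\varphi\|_{H^1}$, which cannot be absorbed. The fix is to choose \emph{which} block gets the extension correction according to the sign of $\tau$: for $\tau\ge 0$ correct only $u$ (factor $e^{-\tau}\le 1$) and handle $v$ by noting $v|_{\partial\Omega}=-ie^\tau(\sigma\cdot\nu)u$ together with a trace estimate that pairs against the already-controlled $\nabla u$; more simply, absorb by the standard trick: from the above one gets $\|\nabla\varphi\|_{L^2}\le C(\|\varphi\|_{L^2}+\|\alpha\cdot\nabla\varphi\|_{L^2})+C\cosh\tau\,\varepsilon\|\nabla\varphi\|_{L^2}+C_\varepsilon\cosh\tau\,\|\varphi\|_{L^2}$ after splitting the extension norms via an interpolation/Ehrling inequality $\|u\|_{H^{1/2}(\partial\Omega)}\le \varepsilon\|\nabla u\|_{L^2}+C_\varepsilon\|u\|_{L^2}$, and then choosing $\varepsilon$ of size $1/(2C\cosh\tau)$ to move the $\|\nabla\varphi\|_{L^2}$ term to the left, which produces precisely the claimed factor $C\cosh\tau$. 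This Ehrling-type splitting, with the $\varepsilon$ chosen $\tau$-dependently, is the heart of the argument and where the $\cosh\tau$ growth (rather than something better) genuinely enters.
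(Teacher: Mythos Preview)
Your argument has a genuine gap at the ``Ehrling'' step. The inequality you invoke,
\[
\|u\|_{H^{1/2}(\partial\Omega)^2}\le \varepsilon\|\nabla u\|_{L^2(\Omega)^2}+C_\varepsilon\|u\|_{L^2(\Omega)^2},
\]
is \emph{false}. The trace operator $H^1(\Omega)\to H^{1/2}(\partial\Omega)$ is surjective with a bounded right inverse, hence not compact, so no Ehrling-type improvement over the plain trace bound is available. (Concretely, on the unit disk the functions $u_n=n^{-1/2}r^ne^{in\theta}$ satisfy $\|u_n\|_{H^1}\simeq 1$, $\|u_n\|_{L^2}\to 0$, and $\|u_n|_{\partial\Omega}\|_{H^{1/2}}\simeq 1$.) The genuine $\varepsilon$-trace inequality holds only with $L^2(\partial\Omega)$ on the left-hand side, but your extensions $w=E(u|_{\partial\Omega})$ and $z=E(v|_{\partial\Omega})$ require the full $H^{1/2}$ norm to control $\|w\|_{H^1}$ and $\|z\|_{H^1}$. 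Without this step you cannot absorb the $e^{|\tau|}\|\varphi\|_{H^1}$ feedback and the argument remains circular. Even granting the inequality with the usual $C_\varepsilon\sim\varepsilon^{-1}$, your choice $\varepsilon\sim(\cosh\tau)^{-1}$ would yield $C_\varepsilon\cosh\tau\sim\cosh^2\tau$ in front of $\|\varphi\|_{L^2}$, not the claimed $\cosh\tau$.

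The paper avoids all of this by a different and much shorter route. It first proves the case $\tau=0$ using the identity
\[
\|\nabla\psi\|_{L^2(\Omega)^4}^2=\|\alpha\cdot\nabla\psi\|_{L^2(\Omega)^4}^2-\tfrac12\int_{\partial\Omega}\kappa|\psi|^2\,d\upsigma
\]
valid on $\mathrm{Dom}(\Dirac_0)$, together with the (correct) $L^2(\partial\Omega)$ $\varepsilon$-trace inequality. The general $\tau$ is then reduced to $\tau=0$ via the algebraic observation (\Cref{Lemma:FromHtautoH0}) that the diagonal map $\varphi\mapsto\psi:=\mathrm{diag}(e^{\tau/2}I_2,e^{-\tau/2}I_2)\varphi$ is a bijection from $\mathrm{Dom}(\Dirac_\tau)$ onto $\mathrm{Dom}(\Dirac_0)$. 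Passing through this transformation and back costs a factor $e^{|\tau|}\le 2\cosh\tau$ on each of $\|\nabla\cdot\|$, $\|\cdot\|$, and $\|\alpha\cdot\nabla\cdot\|$ (squared), which after taking square roots gives exactly the stated $\cosh\tau$. This conjugation trick is the missing idea; it replaces any $H^{1/2}$ trace analysis entirely.
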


To prove this lemma we will first show it in the case $\tau=0$. Then, the general case $\tau\in\R$ will easily follow from a simple observation which relates 
$\mathrm{Dom(\Dirac_\tau)}$ and $\mathrm{Dom}(\Dirac_0)$, as stated in the following lemma. 

\begin{lemma} \label{Lemma:FromHtautoH0}
    Let $\tau\in\R$. Given $\varphi=(u,v)^\intercal\in L^2(\Omega)^4$ set
    $$ \psi := \begin{pmatrix}
            e^{\tau/2}u \\
            e^{-\tau/2}v
        \end{pmatrix}
        =\begin{pmatrix}
            e^{\tau/2} & 0 \\
            0 & e^{-\tau/2}
        \end{pmatrix} \varphi \in L^2(\Omega)^4. 
    $$
 Then,  $\varphi\in\mathrm{Dom}(\Dirac_\tau)$ if and only if 
 $\psi\in \mathrm{Dom}(\Dirac_0)$.
\end{lemma}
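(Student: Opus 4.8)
The plan is to exploit that $\psi = M_\tau\varphi$, where $M_\tau$ denotes the constant diagonal matrix with diagonal blocks $e^{\tau/2}I_2$ and $e^{-\tau/2}I_2$; note $M_\tau = e^{(\tau/2)\beta}$ and $M_\tau^{-1} = e^{-(\tau/2)\beta}$. Multiplication by $M_\tau$ (and by $M_\tau^{-1}$) is a bounded bijection of $H^1(\Omega)^4$ onto itself and commutes with the trace operator, in the sense that the trace of $\psi$ on $\partial\Omega$ is $M_\tau$ times the trace of $\varphi$. Hence $\varphi\in H^1(\Omega)^4$ if and only if $\psi\in H^1(\Omega)^4$, and the lemma reduces to checking that, under this correspondence, the boundary condition defining $\operatorname{Dom}(\Dirac_\tau)$ turns into the one defining $\operatorname{Dom}(\Dirac_0)$.

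For the boundary condition I would use the componentwise form already recorded in the introduction: for $\varphi = (u,v)^\intercal\in H^1(\Omega)^4$, the condition $\varphi = i(\sinh\tau - \cosh\tau\,\beta)(\alpha\cdot\nu)\varphi$ on $\partial\Omega$ is equivalent to the single identity $u = -ie^{-\tau}(\sigma\cdot\nu)v$ on $\partial\Omega$ (the remaining component, $v = ie^{\tau}(\sigma\cdot\nu)u$, follows by multiplying this identity by $ie^{\tau}(\sigma\cdot\nu)$ and using $(\sigma\cdot\nu)^2 = I_2$, and conversely). Writing $\psi = (u',v')^\intercal$, so that $u = e^{-\tau/2}u'$ and $v = e^{\tau/2}v'$, substituting into $u = -ie^{-\tau}(\sigma\cdot\nu)v$ and cancelling the common factor $e^{-\tau/2}$ leaves exactly $u' = -i(\sigma\cdot\nu)v'$, which is the boundary condition in $\operatorname{Dom}(\Dirac_0)$. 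Every step of this chain is reversible, so $\varphi\in\operatorname{Dom}(\Dirac_\tau)$ if and only if $\psi\in\operatorname{Dom}(\Dirac_0)$, as claimed.

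I do not expect any genuine obstacle here: the statement is a short algebraic bookkeeping, and the only care needed is in tracking the exponential factors in the two components. The point of isolating it is that, since $\beta$ anticommutes with each $\alpha_j$, one has $M_\tau\alpha_j M_\tau = \alpha_j$, whence $\alpha\cdot\nabla\psi = M_\tau^{-1}(\alpha\cdot\nabla\varphi)$ and $\nabla\varphi = M_\tau^{-1}\nabla\psi$; together with the elementary bound $\|M_\tau^{\pm1}\| = e^{|\tau|/2}\leq\sqrt{2\cosh\tau}$, this will reduce the proof of \Cref{Lemma:boundH1normHtau} for general $\tau\in\R$ to its case $\tau = 0$, which carries the actual analytic content.
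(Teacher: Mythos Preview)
Your proof is correct and follows essentially the same approach as the paper: both observe that the $H^1$-membership is preserved trivially and then verify the boundary condition by direct algebra, with the only cosmetic difference that the paper manipulates the full $4\times4$ identity $\varphi = i(\sinh\tau - \cosh\tau\,\beta)(\alpha\cdot\nu)\varphi$ via matrix commutation, while you pass to the equivalent componentwise form $u = -ie^{-\tau}(\sigma\cdot\nu)v$ before substituting. Your closing remarks on $M_\tau\alpha_j M_\tau = \alpha_j$ and the resulting norm comparison are exactly how the paper then exploits the lemma in the proof of \Cref{Lemma:boundH1normHtau}.
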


\begin{proof}
    Obviously, $\varphi \in H^1(\Omega)^4$ if and only if $\psi \in H^1(\Omega)^4$. Thus, we only need to take care of the boundary conditions. Assume first that 
    $\varphi\in\mathrm{Dom}(\Dirac_\tau)$. Since $\varphi=i(\sinh \tau - \cosh \tau \beta)(\alpha\cdot\nu) \varphi$ on $\partial\Omega$, from the definition of $\psi$ we see that
    \begin{equation}
        \begin{split}
            \psi & = \begin{psmallmatrix}
            e^{\tau/2} & 0 \\
            0 & e^{-\tau/2}
        \end{psmallmatrix} i(\sinh \tau - \cosh \tau \beta) (\alpha\cdot\nu) \varphi 
             = -i \beta \begin{psmallmatrix}
            e^{-\tau/2} & 0 \\
            0 & e^{\tau/2}
            \end{psmallmatrix} (\alpha\cdot\nu) \varphi \\
            & = -i \beta (\alpha\cdot\nu) \begin{psmallmatrix}
            e^{\tau/2} & 0 \\
            0 & e^{-\tau/2}
            \end{psmallmatrix} \varphi 
             = -i \beta (\alpha\cdot\nu) \psi
        \end{split}
    \end{equation}
    on $\partial\Omega$. Therefore, $\psi\in\mathrm{Dom}(\Dirac_0)$. The other implication is analogous.
\end{proof}

With this ingredient in hand, let us now address the proof of \Cref{Lemma:boundH1normHtau}.

\begin{proof}[Proof of \Cref{Lemma:boundH1normHtau}]
    We shall first prove the lemma for $\tau =0$. Assume that
    $\psi \in \mathrm{Dom}(\Dirac_0)$. By \cite[formula $(1.3)$]{Arrizabalaga2017}, we have
    \begin{equation}
        \|\nabla\psi\|_{L^2(\Omega)^4}^2 = \|\alpha \cdot \nabla\psi\|_{L^2(\Omega)^4}^2 - \dfrac{1}{2}\int_{\partial\Omega} \kappa |\psi|^2 \, d\upsigma,
    \end{equation}
    where $\kappa$ denotes the mean curvature of $\partial\Omega$. Using that $\Omega$ is a bounded domain with $C^2$ boundary, and \cite[Lemma 2.6]{Behrndt2014}, given  
    $\epsilon>0$ we see that
    \begin{equation}
        \begin{split}
            \|\nabla\psi\|_{L^2(\Omega)^4}^2 & \leq \|\alpha \cdot \nabla\psi\|_{L^2(\Omega)^4}^2 + \dfrac{1}{2} \|\kappa\|_{L^\infty(\partial\Omega)} \|\psi\|_{L^2(\partial \Omega)^4}^2 \\
            & \leq \|\alpha \cdot \nabla\psi\|_{L^2(\Omega)^4}^2 + \dfrac{1}{2} \|\kappa\|_{L^\infty(\partial\Omega)} \big( \epsilon \|\nabla\psi\|_{L^2(\Omega)^4}^2 + C_\epsilon \|\psi\|_{L^2(\Omega)^4}^2 \big)
        \end{split}
    \end{equation}
   for some $C_\epsilon>0$ depending only on $\epsilon$ and $\Omega$. Taking $\epsilon= \|\kappa\|_{L^\infty(\partial\Omega)}^{-1}$ and grouping terms, we conclude that
    \begin{equation} \label{eq:boundH1normMIT}
        \|\nabla\psi\|_{L^2(\Omega)^4}^2 \leq C \big( \|\psi\|_{L^2(\Omega)^4}^2 + \|\alpha \cdot \nabla\psi\|_{L^2(\Omega)^4}^2 \big)
    \end{equation}
    for some $C>0$ depending only on $\Omega$. This proves the lemma for $\tau=0$.\footnote{Alternatively, a proof for $\tau=0$  based on \eqref{eq:closed_graph_thm_dirac} can be carried out with no difficulties.} 
    
    We now prove the lemma for arbitrary $\tau \in \R$. Assume that $\varphi \in \mathrm{Dom}(\Dirac_\tau)$. By \Cref{Lemma:FromHtautoH0}, the function 
    \begin{equation}
        \psi := \begin{pmatrix}
            e^{\tau/2} & 0 \\
            0 & e^{-\tau/2}
        \end{pmatrix} \varphi
    \end{equation}
    belongs to $\mathrm{Dom}(\Dirac_0)$ and, therefore, \eqref{eq:boundH1normMIT} holds for this $\psi$. As a consequence,
    \begin{equation}
        \begin{split}
            \|\nabla\varphi\|_{L^2(\Omega)^4}^2 & = \left\| \nabla \begin{psmallmatrix}
            e^{-\tau/2} & 0 \\
            0 & e^{\tau/2}
        \end{psmallmatrix} \psi \right\|_{L^2(\Omega)^4}^2 
         \leq (e^\tau+e^{-\tau}) \|\nabla \psi\|_{L^2(\Omega)^4}^2 
         \\
        & \leq C\cosh\tau \left( \|\psi\|_{L^2(\Omega)^4}^2 + \|\alpha \cdot \nabla\psi\|_{L^2(\Omega)^4}^2 \right) \\
        & = C\cosh\tau \left( \left\|\begin{psmallmatrix}
            e^{\tau/2} & 0 \\
            0 & e^{-\tau/2}
        \end{psmallmatrix} \varphi \right\|_{L^2(\Omega)^4}^2 + \left\|\alpha \cdot \nabla\begin{psmallmatrix}
            e^{\tau/2} & 0 \\
            0 & e^{-\tau/2}
        \end{psmallmatrix} \varphi \right\|_{L^2(\Omega)^4}^2 \right) \\
        & \leq C\cosh^2\tau \left( \|\varphi\|_{L^2(\Omega)^4}^2 + \|\alpha \cdot \nabla\varphi\|_{L^2(\Omega)^4}^2 \right)
        \end{split}
    \end{equation}
for some $C>0$ depending only on $\Omega$.
\end{proof}

With these results, \Cref{thm:ContinuityResolvent} follows straightforwardly.

\begin{proof}[Proof of \Cref{thm:ContinuityResolvent}]
    Given $\lambda \in \C\setminus \R$ and $f\in L^2(\Omega)^4$, set $\varphi_\tau := (\Dirac_\tau - \lambda)^{-1}f \in \mathrm{Dom}(\Dirac_\tau)$. By \Cref{Lemma:boundH1normHtau} and arguing as in  \eqref{eq:bound_norm_DiracSobolev}, we see that
    \begin{equation}
        \begin{split}
            \|(\Dirac_\tau - \lambda)^{-1}f\|_{H^1(\Omega)^4} & \leq C\cosh\tau \left( \| \varphi_\tau\|_{L^2(\Omega)^4} + \|\alpha\cdot\nabla \varphi_\tau\|_{L^2(\Omega)^4} \right) \\ 
            & \leq C\cosh\tau \left( 1+\frac{1+m+|\lambda|}{|\!\im(\lambda)|} \right) \| f \|_{L^2(\Omega)^4}
        \end{split}
    \end{equation}
    for some $C>0$ depending only on $\Omega$,
    as desired.
\end{proof}

To conclude this section, we address the proof of \Cref{thm:res_conv_norm} based on estimating  \eqref{eq:pairing_tau0}.

\begin{proof}[Proof of \Cref{thm:res_conv_norm}]
    By \cite[Theorem VIII.19]{ReedSimon1980} it is enough to prove that the difference of resolvents at $\lambda=i$, namely
    \begin{equation}
        W_\tau := (\Dirac_{\tau_0}-i)^{-1} - (\Dirac_\tau - i)^{-1},
    \end{equation}
    converges to zero in norm as $\tau \to \tau_0$. Given $f,g\in L^2(\Omega)^4$, set
    \begin{equation}
        \begin{split}
            &\psi_\tau =(u_\tau, v_\tau)^\intercal:= (\Dirac_\tau-i)^{-1}f \in \operatorname{Dom}(\Dirac_\tau),\\
            &\varphi =(u, v)^\intercal := (\Dirac_{\tau_0}+i)^{-1}g \in \operatorname{Dom}(\Dirac_{\tau_0}).
        \end{split}
    \end{equation}
    Integration by parts leads to
    \begin{equation} 
        \begin{split}
            \langle W_\tau f, g \rangle_{L^2(\Omega)^4} & = \langle(\Dirac_{\tau_0}-i)^{-1}f - (\Dirac_\tau-i)^{-1}f, g \rangle_{L^2(\Omega)^4} \\
            & = \langle f, (\Dirac_{\tau_0}+i)^{-1}g \rangle_{L^2(\Omega)^4} - \langle (\Dirac_\tau-i)^{-1}f, g \rangle_{L^2(\Omega)^4} \\
            & = \langle (\Dirac_\tau-i)\psi_\tau, \varphi \rangle_{L^2(\Omega)^4} - \langle \psi_\tau, (\Dirac_{\tau_0}+i)\varphi \rangle_{L^2(\Omega)^4} \\
            & = -i\langle \alpha\cdot\nabla \psi_\tau, \varphi \rangle_{L^2(\Omega)^4} -i \langle \psi_\tau, \alpha\cdot\nabla \varphi \rangle_{L^2(\Omega)^4} \\
            & = -i\langle(\alpha\cdot\nu) \psi_\tau, \varphi \rangle_{L^2(\partial\Omega)^4} \\
            & = -i \int_{\partial\Omega} 
            \big((\sigma\cdot\nu) 
              v_\tau\cdot \overline u + (\sigma\cdot\nu)u_\tau\cdot \overline v \big) \, d\upsigma.
        \end{split}
    \end{equation}
    Since $\psi_\tau\in \mathrm{Dom}(\Dirac_\tau)$ and $\varphi\in \mathrm{Dom}(\Dirac_{\tau_0})$, it holds that $v_\tau = ie^\tau (\sigma\cdot\nu) u_\tau$ and $v = ie^{\tau_0} (\sigma\cdot\nu) u$ on $\partial \Omega$. Hence, the previous computation leads to
    \begin{equation}
        \left( e^{-\tau_0} - e^{-\tau} \right) \int_{\partial\Omega} v_\tau \cdot \overline v \, d\upsigma = \langle W_\tau f, g \rangle_{L^2(\Omega)^4} = \left( e^\tau - e^{\tau_0} \right) \int_{\partial\Omega} u_\tau \cdot \overline u \, d\upsigma.
    \end{equation}
    Using this and the fact that $\langle \psi_\tau, \varphi \rangle_{L^2(\partial\Omega)^4} = \int_{\partial\Omega} u_\tau \cdot \overline u \, d\upsigma + \int_{\partial\Omega} v_\tau \cdot\overline v \, d\upsigma$, we see that
    \begin{equation}
        \langle \psi_\tau, \varphi \rangle_{L^2(\partial\Omega)^4} = \dfrac{1}{e^\tau - e^{\tau_0}} \langle W_\tau f, g \rangle_{L^2(\Omega)^4} + \dfrac{1}{e^{-\tau_0} - e^{-\tau}} \langle W_\tau f, g \rangle_{L^2(\Omega)^4},
    \end{equation}
    from where we conclude that
    \begin{equation}\label{eq:norm_conv_tau0_last2}
        \langle W_\tau f, g \rangle_{L^2(\Omega)^4} = \dfrac{\sinh \frac{\tau-\tau_0}{2}}{\cosh \frac{\tau+\tau_0}{2}} \langle \psi_\tau, \varphi \rangle_{L^2(\partial\Omega)^4}.
    \end{equation}
    
    Since both $\psi_\tau$ and $\varphi$ belong to $H^1(\Omega)^4$, by \eqref{eq:dual_pairing_L2}, the trace theorem from $H^1(\Omega)^4$ to $H^{1/2}(\Omega)^4$, and \Cref{Lemma:SigmaGradEstimates} applied to both of the components of $\varphi=(u,v)^\intercal$, we deduce that
    \begin{equation}\label{eq:norm_conv_tau0_last}
        \begin{split}
            |\langle W_\tau f, g \rangle_{L^2(\Omega)^4}| & = \left| \dfrac{\sinh \frac{\tau-\tau_0}{2}}{\cosh \frac{\tau+\tau_0}{2}} \right| \left| \langle \varphi, \psi_\tau \rangle_{H^{-1/2}(\partial\Omega)^4, H^{1/2}(\partial\Omega)^4} \right| \\
            & \leq C \left| \dfrac{\sinh \frac{\tau-\tau_0}{2}}{\cosh \frac{\tau+\tau_0}{2}} \right| \|\varphi\|_{H^{-1/2}(\partial\Omega)^4} \|\psi_\tau\|_{H^1(\Omega)^4} \\
            & \leq C \left| \dfrac{\sinh \frac{\tau-\tau_0}{2}}{\cosh \frac{\tau+\tau_0}{2}} \right| \left( \|\varphi\|_{L^2(\Omega)^4} + \|\alpha\cdot\nabla \varphi\|_{L^2(\Omega)^4} \right) \|(\Dirac_\tau-i)^{-1}f\|_{H^1(\Omega)^4}
        \end{split}
    \end{equation}
   for some $C>0$ depending only on $\Omega$. Now, arguing as in \eqref{eq:bound_norm_DiracSobolev} but on $\varphi$ and $g$, and using also \Cref{thm:ContinuityResolvent}, the previous estimate leads to
    \begin{equation}
         |\langle W_\tau f, g \rangle_{L^2(\Omega)^4}| \leq C_m \left| \dfrac{\sinh \frac{\tau-\tau_0}{2}}{\cosh \frac{\tau+\tau_0}{2}} \right| \cosh\tau \|f\|_{L^2(\Omega)^4} \|g\|_{L^2(\Omega)^4},
    \end{equation}
    for some $C_m>0$ depending only on $m$ and $\Omega$. Dividing both sides of this last inequality by $\|f\|_{L^2(\Omega)^4} \|g\|_{L^2(\Omega)^4}$ and taking the supremum among all functions $f,g \in L^2(\Omega)^4\setminus\{0\}$, we conclude that
    \begin{equation}
        \|W_\tau\|_{L^2(\Omega)^4\rightarrow L^2(\Omega)^4} \leq C_m \left| \dfrac{\sinh \frac{\tau-\tau_0}{2}}{\cosh \frac{\tau+\tau_0}{2}} \right| \cosh\tau
    \end{equation}
    for some $C_m>0$ depending only on $m$ and $\Omega$. The theorem follows by taking $\tau\to \tau_0$.
    
A final comment is in order. The proof of this theorem can also be carried out using \eqref{eq:closed_graph_thm_dirac} instead of \Cref{thm:ContinuityResolvent}. To do it, in view of \eqref{eq:norm_conv_tau0_last2}, one simply 
 replaces $$\left| \langle \varphi, \psi_\tau \rangle_{H^{-1/2}(\partial\Omega)^4, H^{1/2}(\partial\Omega)^4} \right|\quad\text{by}\quad
 \left| \langle \psi_\tau, \varphi  \rangle_{H^{-1/2}(\partial\Omega)^4, H^{1/2}(\partial\Omega)^4} \right|$$
in \eqref{eq:norm_conv_tau0_last}. In this way, when arguing as in \eqref{eq:norm_conv_tau0_last}, the term 
$\|\psi_\tau\|_{H^{-1/2}(\partial\Omega)^4}$ can be estimated independently of $\tau$ thanks to \Cref{Lemma:SigmaGradEstimates} and \eqref{eq:bound_norm_DiracSobolev}, and the term 
$\|\varphi\|_{H^1(\Omega)^4}$ can be estimated using \eqref{eq:closed_graph_thm_dirac} for $\tau=\tau_0$. 
\end{proof}

\end{document}